\title[Uniqueness and multiplicity for elliptic problems in unbounded domains]{Uniqueness and multiplicity for semilinear elliptic problems in unbounded domains}
\author{Henri Berestycki}
\address{%
  HB: Department of Mathematics, University of Maryland, College Park, MD 20742, USA;
  CAMS, École des hautes études en sciences sociales and CNRS, Paris, France;
  Institute for Advanced Study, HKUST, Hong Kong
}
\email{\href{mailto:hb@ehess.fr}{hb@ehess.fr}}
\author{Cole Graham}
\address{CG: Department of Mathematics, University of Wisconsin--Madison, Madison, WI 53706, USA}
\email{\href{mailto:graham@math.wisc.edu}{\tt graham@math.wisc.edu}}
\author{Juncheng Wei}
\address{JW: Department of Mathematics, Chinese University of Hong Kong, Shatin, Hong Kong}
\email{\href{mailto:jcwei@math.ubc.ca}{\tt jcwei@math.ubc.ca}}
\begin{document}

\begin{abstract}
  We study the influence of geometry on semilinear elliptic equations of bistable or nonlinear-field type in unbounded domains.
  We discover a surprising dichotomy between epigraphs that are bounded from below and those that contain a cone of aperture greater than $\pi$: the former admit at most one positive bounded solution, while the latter support infinitely many.
  Nonetheless, we show that every epigraph admits at most one strictly stable solution.
  To prove uniqueness, we strengthen the method of moving planes by decomposing the domain into one region where solutions are stable and another where they enjoy a form of compactness.
  Our construction of many solutions exploits a connection with Delaunay surfaces in differential geometry, and extends to all domains containing a suitably wide cone, including exterior domains.
\end{abstract}

\maketitle

\section{Overview}

We study the number of positive bounded solutions of semilinear elliptic problems in unbounded domains $\Omega \subset \R^d$ with Dirichlet boundary conditions: 
\begin{equation}
  \label{eq:main}
  \begin{cases}
    -\Delta u = f(u) & \text{in } \Omega,\\
    u = 0 & \text{on } \partial \Omega.
  \end{cases}
\end{equation}
This equation describes stationary states of parabolic reaction-diffusion equations with absorbing boundary conditions. 
We consider here two classes of nonlinearities, \emph{bistable} and \emph{field type}.
We first discuss the bistable setting.

Our bistable nonlinearities feature two stable roots and favor one over the other (they are unbalanced).
The prototypical example is $f(u) = u(u - \theta)(1-u)$ for some $\theta \in (0, 1/2)$.
We briefly defer our general hypotheses on $f$ to focus on our main results.

We show that the number of solutions of \eqref{eq:main} depends on the geometry of the domain in a striking fashion.
Consider the epigraph $\Omega = \{y > \phi(x') : x'\in \R^{d-1} \}$ of a uniformly Lipschitz function $\phi \colon \R^{d-1} \to \R$.
We say $\Omega$ is ``bounded from below'' if $\inf \phi > - \infty$.
\begin{theorem}
  \label{thm:unique}
  Let $f$ be a bistable nonlinearity satisfying \ref{hyp:bistable} below.
  If $\Omega$ is a uniformly Lipschitz epigraph that is bounded from below, then \eqref{eq:main} admits a unique positive bounded solution $u$.
  It satisfies $\partial_y u > 0$ and $u(x) \to 1$ uniformly as $\dist(x, \Omega^\cc) \to \infty$.
\end{theorem}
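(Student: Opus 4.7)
Normalize by vertical translation so that $\phi \geq 0$; let $L$ denote its Lipschitz constant. I build the solution, establish its decay to $1$, then prove monotonicity and uniqueness by a sliding argument in $y$, leaning on the stable/compact decomposition announced in the abstract.

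\emph{Existence and asymptotics.} Classical theory for the unbalanced bistable $f$ produces $R_0 > 0$ and radial solutions $v_R$ of $-\Delta v = f(v)$ on $B_R$ with $v_R|_{\partial B_R} = 0$ for $R \geq R_0$, obeying $v_R(0) \to 1$ as $R \to \infty$. Placing $B_R \subset \Omega$ and extending $v_R$ by zero yields a subsolution of \eqref{eq:main}, while $\bar u \equiv 1$ is a supersolution; monotone iteration on an exhausting sequence of bounded subdomains $\Omega \cap B_N$ produces a positive bounded solution $u$. The asymptotic $u(x) \to 1$ as $\dist(x, \partial \Omega) \to \infty$ then follows by sweeping: translating the ball subsolution inside $\Omega$ and appealing to strong MP at first contact gives $u(x) \geq v_R(0)$ whenever $B_R(x) \subset \Omega$. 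The same sweeping applies to \emph{any} positive bounded solution of \eqref{eq:main}, since it uses only positivity and the subsolution structure of the ball profiles.

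\emph{Monotonicity via sliding.} For $t \geq 0$, set $u^t(x', y) := u(x', y + t)$ and $w_t := u^t - u$, which obey $-\Delta w_t = c_t w_t$ in $\Omega$ with $w_t \geq 0$ on $\partial \Omega$, where $c_t := \bigl(f(u^t) - f(u)\bigr)/w_t$ is bounded. Uniform Lipschitz geometry gives $\dist((x', y+t), \partial \Omega) \gtrsim t$ on $\Omega$, so the asymptotic yields $u^t \to 1$ uniformly as $t \to \infty$; in particular, for $t$ large, $w_t > 0$ wherever $u < 1 - \eta$ trivially, while on $\{u \geq 1 - \eta\}$ the coefficient $c_t$ is bounded above by $-\mu < 0$, so a MP for unbounded domains with strictly negative lowest-order coefficient gives $w_t \geq 0$. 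Define
\[
  t^\ast := \inf\{T \geq 0 : w_t \geq 0 \text{ on } \Omega \text{ for all } t \geq T\}.
\]
If $t^\ast > 0$, one aims to contradict minimality by perturbing downward. The delicate point is that $w_{t^\ast}$ may touch $0$ only at infinity (where both $u, u^t \to 1$), so ordinary strong MP fails to produce a quantitative gap. Here the decomposition $\Omega = \Omega_{\mathrm s} \cup \Omega_{\mathrm c}$ intervenes: on the \emph{stable} region $\Omega_{\mathrm s} := \{\dist(\cdot, \partial\Omega) > K\}$, with $K$ large so that $u, u^{t^\ast} > 1 - \eta$ and $f'$ is $< -\mu$ there, a tailored MP upgrades $w_{t^\ast} > 0$ to a uniform positive lower bound; on the \emph{compact} region $\Omega_{\mathrm c}$, families of horizontal translates of $(u, u^{t^\ast})$ are locally precompact by the uniform Lipschitz regularity of $\partial \Omega$, and a compactness-contradiction argument yields a lower bound as well. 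Gluing yields $w_{t^\ast} \geq \delta > 0$ on an inflated collar of $\partial \Omega$, which suffices to decrease $t^\ast$, a contradiction. Hence $t^\ast = 0$, so $\partial_y u \geq 0$; applying strong MP to the linearized equation satisfied by $\partial_y u$ gives strict positivity.

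\emph{Uniqueness and main obstacle.} For two positive bounded solutions $u_1, u_2$, each satisfies the asymptotic from Stage 1, and the sliding argument applies verbatim with $w_t := u_1^t - u_2$, giving $u_1 \geq u_2$; swapping roles yields $u_1 = u_2$. The decisive obstacle throughout is the downward sliding from $t^\ast$: in an unbounded domain the standard touching-point strong MP argument breaks because the infimum of $w_{t^\ast}$ can be attained only at infinity. The paper's decomposition --- strict stability $f'(u) < -\mu$ far from $\partial \Omega$, compactness of translates near $\partial \Omega$ --- resolves this by producing compatible quantitative lower bounds on both pieces. Verifying that the two estimates glue, and that the threshold $1-\eta$ is reached for both $u$ and $u^{t^\ast}$ on a common set $\Omega_{\mathrm s}$, will constitute the main technical task.
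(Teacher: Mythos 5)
Your high-level skeleton (existence, asymptotics, monotonicity, uniqueness; a stable/compact decomposition with stable part where $f'(u)<-\mu$) and your diagnosis of the main obstacle (the infimum of $w_{t^\ast}$ escaping to infinity) are both correct. But the chain of implications has a genuine gap at its first link, and the gap propagates.

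The sweeping argument for the asymptotics does not apply to an arbitrary positive bounded solution, contrary to your claim. To slide $v_R(\cdot - x_0)$ around the set $\{x_0 : B_R(x_0)\subset\Omega\}$ and conclude $u \geq v_R(\cdot - x_0)$ everywhere, you must first exhibit one admissible $x_0$ at which the ordering holds. For the solution constructed by monotone iteration from the ball subsolution this is automatic, but an \emph{a priori} positive bounded solution $u$ could, as far as the sweeping argument knows, be uniformly small (below $\theta$, say), and there is then no admissible starting position. In one dimension on $(-R,R)$, the equation $-u''=f(u)$ has small positive solutions with small positive boundary data lying strictly \emph{below} the zero-boundary-data bulge $v_R$, so the ordering against $v_R$ is not a consequence of the equation alone; it is exactly the content of the theorem. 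The paper escapes this by proving monotonicity $\partial_y u>0$ \emph{first}, via the moving-plane (reflection) method in Proposition~\ref{prop:monotone}, started from $\mu$ small — where $\Sigma^\mu \subset \R^{d-1}\times(0,\mu)$ is a thin slab and the principal eigenvalue of $-\Delta - q^\mu$ is automatically large. This is precisely where ``bounded from below'' enters. The asymptotics then follow in Lemma~\ref{lem:limit} from monotonicity plus eigenvalue semicontinuity (Lemma~\ref{lem:semicontinuous}) plus the Liouville theorem of~\cite{LWWW}, not from sweeping.

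Because of this, your monotonicity-via-sliding argument is circular: to start the slide at large $t$ you invoke $u^t\to 1$ uniformly, which is the asymptotic you have not yet legitimately established. Your stable/compact decomposition (stable $=\{\dist(\cdot,\partial\Omega)>K\}$ where $u,u^{t^\ast}\approx 1$; compact $=$ collar) matches the paper's \emph{uniqueness} step (Proposition~\ref{prop:MP-interior} and the sliding in the proof of Theorem~\ref{thm:unique}), but the \emph{monotonicity} step uses a dual decomposition: stable near $\partial\Omega$ and near $\{y=\mu\}$ (where $u,v^\mu$ are small and $q^\mu\leq f'(0)/2$), compact $\Sigma_- = \{u\geq s,\ \dist\geq\delta\}$ (where translation compactness and strong MP give the strict gap). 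Relatedly, your compactness-contradiction on the collar can degenerate: if $w_{t^\ast}(x_n)\to 0$, the translated limit $w_\infty$ is forced $\equiv 0$ by strong MP, but this only contradicts something if the translated limit $u_\infty$ is nonzero — and that nontriviality is supplied by the asymptotics (in the paper, via Lemma~\ref{lem:limit}), again circularly in your version. The fix is to adopt the paper's order of steps: reflections starting at the bottom, then $u\to 1$, then the interior MP and the sliding.
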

To show this, we use the moving plane method of Alexandrov~\cite{Alexandrov} and Serrin~\cite{Serrin} to establish monotonicity.
This is rather delicate, as the sublevel sets of $\phi$ may be unbounded.
To overcome this obstacle, we employ the ``stable-compact'' framework of the first two authors~\cite{BG23b} through a decomposition of the sublevel sets inspired by arguments of the first author and Nirenberg~\cite{BN91}.
We then use the sliding method to derive uniqueness from monotonicity, as in work of the first author with Caffarelli and Nirenberg~\cite{BCN97b}.

One naturally wonders whether the lower bound on the epigraphs in Theorem~\ref{thm:unique} is necessary.
In fact, this condition is nearly sharp.
Indeed, \eqref{eq:main} admits infinitely many solutions once $\Omega$ contains a circular cone of aperture greater than $\pi$. 
For brevity, we say a domain has aperture larger than $\pi$ if it contains such a cone. 
\begin{theorem}
  \label{thm:multiple}
  Suppose $\Omega$ is a locally Lipschitz domain (not necessarily an epigraph) with aperture greater than $\pi$, 
  and $f$ is bistable and nondegenerate in the sense of \ref{hyp:ground} below.
  Then \eqref{eq:main} admits uncountably many positive bounded solutions.
\end{theorem}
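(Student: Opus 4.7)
The plan is to construct distinct solutions by superimposing translates of the $\R^d$ ground state along the axis of the prescribed cone, then correcting them via a Lyapunov--Schmidt reduction whose finite-dimensional bifurcation equation is the PDE analogue of the Delaunay ODE for constant-mean-curvature surfaces of revolution. By \ref{hyp:ground}, the equation $-\Delta U = f(U)$ on $\R^d$ admits a positive, radial, nondegenerate ground state $U$ with exponential decay. Let $C \subset \Omega$ be a circular cone of aperture $\alpha > \pi$ with axis $e$; the aperture condition ensures that $\dist(te, \partial \Omega) \to \infty$ linearly in $t$. For an increasing sequence $\mathbf{t} = (t_k)$ along $e$ with sufficiently large gaps, I would take the ansatz
\[
  u_{\mathbf{t}}(x) = \sum_k U(x - t_k e),
\]
whose residual $-\Delta u_{\mathbf{t}} - f(u_{\mathbf{t}})$ is exponentially small both in the spacings $t_{k+1}-t_k$ and in $\dist(t_k e, \partial\Omega)$.

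Next I would execute a Lyapunov--Schmidt step. Linearizing \eqref{eq:main} at $u_{\mathbf{t}}$, the approximate kernel is spanned by the translation modes $\partial_j U(\cdot - t_k e)$; projecting these out yields an operator with uniform spectral gap in a suitable exponentially weighted space, and a contraction mapping produces a correction $\varphi_{\mathbf{t}}$ so that $u_{\mathbf{t}} + \varphi_{\mathbf{t}}$ solves \eqref{eq:main} modulo the finite-dimensional bifurcation subspace. The surviving bifurcation equations balance (exponentially small) bubble--bubble interactions against a bubble--boundary drift---the PDE counterpart of the balance law underlying Delaunay surfaces---and admit a continuous family of solutions parametrized by the asymptotic bubble spacing. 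Varying this spacing over any short interval produces solutions that differ pointwise near the affected bubble, hence are genuinely distinct; in total one obtains uncountably many positive bounded solutions of \eqref{eq:main}.

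The main difficulty is the uniform invertibility of the projected linearization along an infinite bubble train, in a norm strong enough to separate distinct configurations yet compatible with exponential decoupling of the bubbles. This demands weighted Schauder estimates that effectively block-diagonalize the linearization, together with quantitative interaction estimates for $U$. The aperture hypothesis $\alpha > \pi$ enters decisively here: it keeps each bubble at a linearly growing distance from $\partial\Omega$, so that boundary contributions are subordinate to the inter-bubble forcing and the Lyapunov--Schmidt system admits a well-posed reduced problem. Without this geometric room---as, for instance, in the half-space case of Theorem~\ref{thm:unique}---attraction from reflected images would drive the bubbles to infinity and the construction would collapse.
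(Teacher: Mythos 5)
Your high-level strategy—gluing exponentially separated ground states and closing the system via a Lyapunov--Schmidt reduction in the spirit of Kapouleas and Malchiodi—is the same one the paper uses, and your recognition that the reduced equations are Delaunay-type force-balance conditions is also correct. But the geometric arrangement you propose does not produce an equilibrium, and this is exactly where the aperture-greater-than-$\pi$ hypothesis does its real work, which your proposal misses.

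You place the spikes as a one-sided chain $\sum_k U(x - t_k e)$ along the \emph{axis} $e$ of the cone, marching away from the boundary. The paper shows (via the Dirichlet projection $\bar U_0$ and the estimates on $\varphi_0 = U_0 - \bar U_0 > 0$ in Lemmas~\ref{lem:projection}--\ref{lem:center}) that the boundary \emph{repels} a spike: the leading contribution to the force on the innermost spike is $\approx \varphi_0(z_0)\, e_y$, pointing away from $\partial\Omega$. Meanwhile the attraction from the next spike in your chain, which sits farther up the axis, pulls the innermost spike in the \emph{same} direction. Both leading forces push it outward, so the bifurcation equation at the bottom of the chain has no solution; there is no ``drift'' to balance against, contrary to what you assert. (Your remark that ``attraction from reflected images would drive the bubbles to infinity'' has the sign and the mechanism backwards: the Dirichlet image acts repulsively, and the paper further notes in Lemma~\ref{lem:center} that it is not even exactly an image spike because $\varphi_0(z_0) \asymp L_0^{-(d-1)}e^{-2L_0}$ differs from $U(2L_0)$ by a polynomial factor.)

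The paper's resolution is to use a bent, two-sided chain: a central spike $z_0$ and two rays of spikes in directions $\theta_\pm$ that tilt \emph{below} the horizontal, moving away from $\partial\Omega$ only because the cone is wider than a half-space. Then the two neighboring spikes $z_{\pm 1}$ both pull $z_0$ downward, and this downward attraction cancels the upward boundary repulsion (Lemma~\ref{lem:forces}). That is precisely why aperture $>\pi$ is needed: only in a nonconvex cone can two interior rays of spikes be inclined below horizontal while the spikes' distance to $\partial\Omega$ still grows linearly. In a convex cone the chains must tilt away from the boundary, so their attraction on the central spike reinforces rather than cancels the boundary repulsion—the paper makes exactly this remark after Lemma~\ref{lem:forces}, and it matches the uniqueness in Theorem~\ref{thm:unique}. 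Your proposal, with a single axial ray, is geometrically equivalent to the convex-cone configuration and so cannot close the reduced system. Beyond this central gap, the paper also needs the Dirichlet projection $\bar U_0$ for the innermost spike and the angular cutoff $\chi$ near $\partial\Omega$, neither of which appears in your sketch, and the free parameters come from $(\theta_\pm, L_\pm, L_0)$ rather than from ``varying the asymptotic spacing'' freely, which the balance conditions constrain.
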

\noindent
The classical nonlinearity $f(u) = u(u - \theta)(1 - u)$ satisfies the nondegeneracy condition \ref{hyp:ground}; see Proposition~\ref{prop:nondegenerate} below.

The multiplicity in Theorem~\ref{thm:multiple} holds in quite general domains, including ``exterior domains'' with compact complement.
Within the class of epigraphs, our results show that the cone $\{y > a \abs{x'}\}$ supports a unique solution when $a \geq 0$ but infinitely many when $a < 0$.

Our construction has its origins in the study of constant mean curvature (CMC) surfaces.
In~\cite{Kapouleas90}, Kapouleas showed how to glue periodic ``Delaunay surfaces'' to produce a rich family of CMC surfaces.
Following formal analogies between differential geometry and elliptic PDEs, Malchiodi~\cite{Malchiodi} adapted these ideas to construct new families of solutions to certain nonlinear field equations in $\R^d$.
These consist of widely-spaced ``ground states'' (see \ref{hyp:ground} below) that formally exert attractive forces on one another.
Malchiodi used a Lyapunov--Schmidt reduction to identify configurations in which all forces balance, producing a solution.

Here, we incorporate Dirichlet boundary into this picture, and find that it plays a major role.
We show that the boundary \emph{repels} ground states.
To attain equilibrium, we must therefore balance this repulsion using the attraction between states.
This places strong geometric constraints on the domain.
When $\Omega$ has aperture greater than $\pi$, we can find many equilibrium configurations of ground states; see Figure~\ref{fig:multiple}.
This is impossible when $\Omega$ is bounded from below, so our construction provides a near-converse to the uniqueness in Theorem~\ref{thm:unique}.
\begin{figure}[t]
  \centering
  \includegraphics[width = 0.9\linewidth]{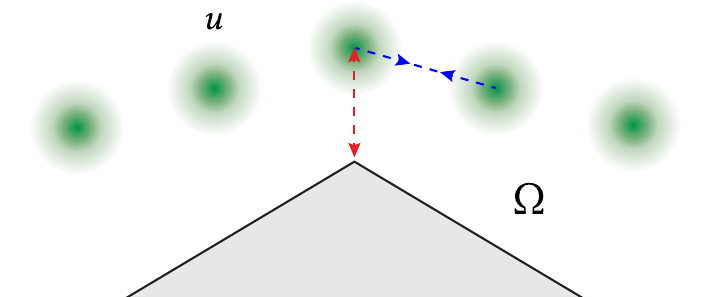}
  \caption{%
    An equilibrium configuration of ground states (green) in a cone $\Omega$ of aperture greater than $\pi$.
    Arrows indicate boundary repulsion (red) and inter-state attraction (blue).
  }
  \label{fig:multiple}
\end{figure}

\medskip

We also treat ``field-type'' nonlinearities resembling $f(u) = -u + u^p$ for a power $p \in (1, \tfrac{d + 2}{d-2})$; for a precise definition see \ref{hyp:field} below.
These are similar to bistable nonlinearities but have no second stable root.
They appear in nonlinear field equations such as the interacting Klein--Gordon equation.

In the bistable case, solutions in Theorem~\ref{thm:unique} converge at infinity to the second stable root $1$ of $f$.
When we remove this root, field equations are left with no bounded positive solutions whatsoever.
\begin{theorem}
  \label{thm:field-unique}
  Let $f$ be field-type as defined in \ref{hyp:field} below.
  If $\Omega$ is a uniformly Lipschitz epigraph that is bounded from below, then \eqref{eq:main} has no positive bounded solution.
\end{theorem}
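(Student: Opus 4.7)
The plan is by contradiction: assume $u$ is a positive bounded solution of \eqref{eq:main}.

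First, I would establish the monotonicity $\partial_y u > 0$ on $\Omega$ via the same moving plane argument used for Theorem~\ref{thm:unique}. That argument relies on $f'(0) < 0$—the linear stability of the zero state—and on the bounded-below epigraph geometry, which is processed through the stable-compact decomposition of~\cite{BG23b}. Both ingredients persist under \ref{hyp:field}, so $\partial_y u > 0$ follows with no essential modification.

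Next, I would show that $u(x) \to 0$ as $\dist(x, \partial \Omega) \to \infty$. The stable-compact framework decomposes $\Omega$ into a stable region, where $u$ is close to a stable zero of $f$, and a transition region. In the bistable case the second stable zero $1$ plays a decisive role and the solution approaches it in the deep interior. For field type, $0$ is the \emph{only} stable zero of $f$, so the only admissible far-field state is $0$. Making this precise amounts to excluding any positive bounded entire solution of $-\Delta v = f(v)$ on $\R^d$ obtained as a translation limit of $u$; combined with the inherited monotonicity $\partial_y v \geq 0$, the limit $v$ would admit horizontal cross-sectional limits $V^{\pm}$ at $y = \pm \infty$ solving $-\Delta V^{\pm} = f(V^{\pm})$ on $\R^{d-1}$, and a one-dimensional energy analysis based on $F(s) := \int_0^s f$ with $F(1) < 0 = F(0)$ rules out the Hamiltonian profiles that underlie the bistable construction.

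Finally, combine the two steps to derive a contradiction. Fix any interior point $x_0 = (x_0', y_0) \in \Omega$, so that $u(x_0) > 0$ and the upward ray $\{(x_0', y) : y \geq y_0\}$ lies in $\Omega$. Since $\phi$ is uniformly Lipschitz, $\phi(x') \leq \phi(x_0') + L \abs{x' - x_0'}$, which readily yields $\dist((x_0', y), \partial \Omega) \to \infty$ as $y \to \infty$. Combined with the decay step, $u(x_0', y) \to 0$, contradicting $u(x_0', y) \geq u(x_0) > 0$ from monotonicity.

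The main obstacle is the second step: justifying that the only possible far-field profile under field-type $f$ is $0$. In the bistable case the stable-compact dichotomy is essentially complete, while field-type requires supplementing it with a translation-limit argument and a Hamiltonian-type energy obstruction exploiting the distinguishing sign of $F(1)$—the very feature that separates field-type from bistable nonlinearities.
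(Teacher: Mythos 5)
Your overall skeleton — monotonicity via moving planes, then analyze the far field, then contradict monotonicity — is close in spirit to the paper's proof of Lemma~\ref{lem:limit}, and your Step~1 matches the paper exactly (Proposition~\ref{prop:monotone} is stated for bistable \emph{and} field-type $f$). However, Step~2 contains a genuine gap, and in fact the conclusion of Step~2 as you state it (``$u \to 0$ far from the boundary'') is internally inconsistent with Step~1: since $\partial_y u > 0$ and $u > 0$, the vertical limit $u_\infty(x') := \lim_{y\to\infty} u(x',y)$ is \emph{strictly positive}, so it cannot be zero. This tension is precisely what should trigger the contradiction, but it means that the way you propose to reach ``$u\to 0$ far from the boundary'' cannot go through as a standalone lemma; the argument must instead classify $u_\infty$ and derive a contradiction from the classification.

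The key idea you are missing is how to classify $u_\infty$. The paper observes that $\partial_y u > 0$ is a positive bounded solution of the linearized equation $(-\Delta - f'(u))\partial_y u = 0$ in $\Omega$, so $\lambda(-\Delta - f'(u), \Omega) \geq 0$; i.e., $u$ is automatically weakly stable. By the upper semicontinuity of the generalized principal eigenvalue (Lemma~\ref{lem:semicontinuous}) and the freeness of the $y$-direction (\cite[Lemma~3.2]{BG23b}), $u_\infty$ is a weakly stable positive bounded solution of $-\Delta u_\infty = f(u_\infty)$ in $\R^{d-1}$. The Liouville theorem of Liu--Wang--Wei--Wu~\cite[Theorem~1.4]{LWWW} then forces $u_\infty$ to be a constant, which must be a positive weakly stable root of $f$; under \ref{hyp:field} no such root exists. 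This is what kills $u$, with no need to establish ``$u\to 0$.'' Your proposed replacement — a Hamiltonian/energy obstruction based on $F(1) < 0$ and ruling out all positive bounded entire solutions of $-\Delta v = f(v)$ in $\R^d$ — does not work as stated: under \ref{hyp:field} there is no distinguished value ``$1$'' and the claim $F(1) < 0$ has no meaning, and field-type nonlinearities \emph{do} admit positive bounded entire solutions (ground states, cf.\ Proposition~\ref{prop:ground-existence}), so one cannot simply exclude them. The stability inherited from $\partial_y u > 0$ is exactly the extra structure that distinguishes $u_\infty$ from such solutions, and your argument does not exploit it.
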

On the other hand, for the motivating nonlinearity $-u + u^p$, domains with aperture exceeding $\pi$ admit infinitely many solutions.
\begin{theorem}
  \label{thm:field-multiple}
  Suppose $\Omega$ is a domain with aperture larger than $\pi$, and $f(u)  = -u + u^p$ for $p \in (1, \tfrac{d + 2}{d-2})$.
  Then \eqref{eq:main} admits uncountably many positive bounded solutions.
\end{theorem}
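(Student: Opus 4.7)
The strategy is to adapt the proof of Theorem~\ref{thm:multiple}, replacing the bistable ground state by the classical ground state of the field equation. For $f(u) = -u + u^p$ in the subcritical range, there is a unique positive radial solution $U \in H^1(\R^d)$ of $-\Delta U + U = U^p$: existence is due to Berestycki--Lions, $U$ decays exponentially at infinity, and the kernel of the linearization $-\Delta + 1 - pU^{p-1}$ on $H^1(\R^d)$ is exactly spanned by the translations $\partial_1 U, \ldots, \partial_d U$ (Kwong). Together these properties verify the nondegeneracy hypothesis \ref{hyp:ground} with $U$ as the ground-state profile, so the entire apparatus of Theorem~\ref{thm:multiple} becomes available.

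\textbf{Key steps.} First I would establish \ref{hyp:ground} for $f(u) = -u + u^p$ using the classical results just quoted; this is the only genuinely new input needed. Next, I would apply the construction of Theorem~\ref{thm:multiple} essentially verbatim. Fix a cone $C \subset \Omega$ of aperture exceeding $\pi$ and a widely-spaced configuration $\xi = (\xi_i)$ in $C$, well-separated from $\partial \Omega$. Build the approximate solution $u_\xi = \chi \sum_i U(\cdot - \xi_i)$, where $\chi$ is a smooth cutoff vanishing near $\partial \Omega$, and perform a Lyapunov--Schmidt decomposition in a function space adapted to the exponential decay of $U$. The nondegeneracy of $U$ gives uniform invertibility of the linearization on the $L^2$-orthogonal complement of $\mathrm{span}\{\partial_j U(\cdot - \xi_i)\}$, producing an exact solution $u_\xi + v(\xi)$ of \eqref{eq:main} governed by finite-dimensional balance equations. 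The same geometric analysis as in the bistable case shows that these balance equations, expressing the cancellation of inter-state attraction against Dirichlet-boundary repulsion, admit a continuum of solutions whenever the cone aperture exceeds $\pi$, yielding uncountably many solutions of \eqref{eq:main}.

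\textbf{Main obstacle.} Because the Lyapunov--Schmidt reduction, the boundary-repulsion analysis, and the balance-equation geometry are all carried out in the proof of Theorem~\ref{thm:multiple} under \ref{hyp:ground} alone, the new work for the field case is confined to verifying \ref{hyp:ground} for $f(u) = -u + u^p$. I expect this to be routine, but some care is required to match the exponential decay rate of $U$ (dictated by the linearization $-\Delta + 1$ at the rest state $U \equiv 0$) and the spectral gap of the nondegenerate linearization to the quantitative estimates used in the reduction, so that the interaction and boundary integrals driving the balance equations retain the same leading-order behavior as in the bistable setting. Once this matching is checked, the uncountable multiplicity follows immediately from the machinery of Theorem~\ref{thm:multiple}.
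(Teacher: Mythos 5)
Your proposal matches the paper's approach exactly: the paper proves a unified multiplicity result (Theorem~\ref{thm:multiple-unified}) under hypothesis~\ref{hyp:ground} alone, and then deduces Theorem~\ref{thm:field-multiple} by invoking Proposition~\ref{prop:nondegenerate}\ref{item:power}, which records nondegeneracy of the ground state for $f(u) = -u + u^p$ in the subcritical range from Kwong's work. Your reduction to verifying~\ref{hyp:ground} and then invoking the Theorem~\ref{thm:multiple} machinery is precisely the intended argument.
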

We now consider the stability of solutions in epigraphs.
We show that every (slightly more regular) epigraph has at most one \emph{strictly stable} solution, even when it admits many others.
\begin{theorem}
  \label{thm:stable}
  Let $\Omega$ be a uniformly $\m{C}^{1,\al}$ epigraph for some $\al \in (0, 1)$.
  If $f$ is bistable~\ref{hyp:bistable}, then \eqref{eq:main} admits exactly one positive bounded strictly stable solution.
  If $f$ is field-type~\ref{hyp:field}, then \eqref{eq:main} has no such solution.
\end{theorem}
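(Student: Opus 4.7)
The plan is to deduce monotonicity of any strictly stable solution from a strengthened moving plane argument, then obtain uniqueness by sliding; existence in the bistable case and nonexistence in the field-type case are then treated separately at the end.

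First, suppose $u$ is a positive bounded strictly stable solution of \eqref{eq:main}. Strict stability provides a positive function $\varphi$ on $\Omega$ and a constant $\mu > 0$ with $(-\Delta - f'(u))\varphi \geq \mu \varphi$. This \emph{globally} positive multiplier can play the role of the compactness ingredient of the stable-compact decomposition employed for Theorem~\ref{thm:unique}: in the moving plane argument comparing $u$ with its reflection $u_h$ across $\{y = h\}$, a weighted comparison using $\varphi$ yields a maximum principle on the entire reflected cap, not only on a distinguished compact subregion. Pushing $h$ to $-\infty$ then gives $\partial_y u \geq 0$, and the strong maximum principle and Hopf lemma upgrade this to $\partial_y u > 0$.

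Given two strictly stable solutions $u_1, u_2$, both strictly monotone in $y$ by the previous step, I would apply the sliding method of~\cite{BCN97b}. Uniform $\m{C}^{1,\al}$ regularity of $\partial \Omega$ together with standard boundary gradient estimates ensures that a sufficiently large upward vertical shift of $u_1$ strictly dominates $u_2$. Let $t^* := \inf\{t : u_1(\cdot + t e_y) \geq u_2 \text{ wherever both are defined}\}$; a contact analysis at $t = t^*$, combined with the strong maximum principle and Hopf lemma, forces $t^* \leq 0$. Swapping the roles of $u_1$ and $u_2$ gives $t^* = 0$ and $u_1 \equiv u_2$.

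To show existence in the bistable case, I would exhaust $\Omega$ by bounded-below epigraphs $\Omega_n \subset \Omega$ obtained by truncating the defining function from below. Theorem~\ref{thm:unique} supplies unique bounded positive solutions $v_n$ on $\Omega_n$; their extensions by zero to $\Omega$ form an increasing sequence of subsolutions whose monotone limit $u^*$ is a bounded positive solution of \eqref{eq:main}. Since $u^*(x) \to 1$ as $\dist(x, \partial \Omega) \to \infty$ and $f'(1) < 0$, the principal Dirichlet eigenvalue of the linearization $-\Delta - f'(u^*)$ on $\Omega$ is strictly positive, so $u^*$ is strictly stable. For field-type $f$, any strictly stable solution $u$ would yield, by the monotonicity step and the limit $y \to \infty$, a positive bounded strictly stable solution $u_\infty$ of the field equation on $\R^{d-1}$; but this is impossible, since the constant state $u \equiv 1$ has $f'(1) > 0$ and every nonconstant positive bounded solution of a field-type equation on Euclidean space has Morse index at least one.

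The principal obstacle is the monotonicity step in general $\m{C}^{1,\al}$ epigraphs that may be unbounded below. There the moving plane argument cannot be initialized from a small cap and the stable-compact decomposition of Theorem~\ref{thm:unique} is unavailable; the new input is that \emph{strict} stability of $u$ produces, through the positive multiplier $\varphi$, a global tool that closes the reflection argument across the entire epigraph. A secondary delicate point is verifying strict stability — rather than merely stability — of the limit $u^*$ arising in the existence construction.
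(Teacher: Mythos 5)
Your proposal takes a genuinely different route from the paper, but it has several gaps that I do not see how to close as written.

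The central gap is in the monotonicity step. You claim that strict stability of $u$ ``closes the reflection argument across the entire epigraph,'' the intuition being that a positive eigenvalue for $-\Delta - f'(u)$ yields a maximum principle on every subdomain. But the comparison between $u$ and its reflection $v^\mu$ is governed by the potential $q^\mu = \frac{f(v^\mu)-f(u)}{v^\mu-u}$, not by $f'(u)$. For a bistable nonlinearity these two potentials are not comparable (the nonlinearity is neither convex nor concave), so $\lambda(-\Delta - f'(u), D) > 0$ gives no control on $\lambda(-\Delta - q^\mu, D)$, and the maximum principle on the reflected cap does not follow. There is also no way to initialize the moving plane when $\inf\phi = -\infty$: the sublevel slabs $\Sigma^\mu$ never become thin, so the standard narrow-region starting step is absent, and ``pushing $h$ to $-\infty$'' has no well-defined beginning. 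The paper avoids moving planes in this setting entirely: instead it blows up along sequences receding from the boundary, transfers weak stability to the limit via eigenvalue upper semicontinuity (Lemma~\ref{lem:semicontinuous}), and applies the classification of stable entire solutions from~\cite{LWWW} to pin down the limit as $0$ or $1$; only then does monotonicity enter, and only for the particular solution converging to $1$, via~\cite{BCN97b}.

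The second, equally serious gap is the one you acknowledge but do not fill: verifying strict stability of the candidate $u^*$. It does not follow from $u^* \to 1$ far from the boundary and $f'(1) < 0$, because near $\partial\Omega$ the potential $f'(u^*)$ is close to $f'(0)$, and the question is whether the Dirichlet condition in the thin boundary layer wins against the unstable intermediate range of $f'$. This is the entire content of the paper's Lemma~\ref{lem:strictly-stable}, which is the technical heart of the theorem and is precisely where the $\m{C}^{1,\alpha}$ regularity is used (boundary Harnack and the Hopf lemma on $\m{C}^{1,\alpha}$ domains). Without an argument here you have only weak stability, which is not the claim.

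Two further problems are worth naming. The sliding step requires knowing that solutions converge uniformly to $1$ away from $\partial\Omega$ in order to start the comparison and extract a contact at $t^*$; you never establish this (it is what the paper's Lemma~\ref{lem:stable-limit} provides). And the field-type argument is garbled: it is not true that ``every nonconstant positive bounded solution of a field-type equation on Euclidean space has Morse index at least one'' without proof, and in any case it is not what is needed. The correct mechanism is that after passing to a stable limit in $\R^d$ one invokes~\cite{LWWW} to conclude the limit is a stable \emph{constant}, which for field-type must be $0$; then $\partial_y u$ serves as a null test function on the proper subdomain $\{\partial_y u < 0\}$, showing $\lambda \leq 0$, which rules out strict stability.

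Your existence-via-exhaustion idea is plausible (the paper instead runs the parabolic flow downward from the supersolution $1$), but on its own it only yields a nonnegative bounded solution with the right limit at infinity, and the real work — uniqueness within the class of solutions converging to $1$, and strict stability — still needs the arguments above.
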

By ``strictly stable,'' we mean the Dirichlet principal eigenvalue $\lambda$ of the operator $-\Delta - f'(u)$ in $\Omega$ is positive; see \eqref{eq:eigenvalue} below.
In the bistable case, it is relatively straightforward to construct a candidate solution satisfying $\lambda \geq 0$.
However, establishing strict positivity in an unbounded domain is nontrivial, and we again deploy the stable-compact method of~\cite{BG23b} to ``localize'' the problem.

We emphasize that Theorem~\ref{thm:stable} does not require $\Omega$ to be bounded from below, so \eqref{eq:main} may well have other (unstable) solutions.
Indeed, the solutions constructed in Theorem~\ref{thm:multiple} and \ref{thm:field-multiple} are strictly unstable (see Proposition~\ref{prop:unstable} below).

\subsection*{Hypotheses and notation}
In all the following, we assume our epigraphs are uniformly Lipschitz in the sense that the defining function $\phi$ is globally Lipschitz.
Similarly, we say an epigraph is uniformly $\m{C}^{1,\al}$ if $\op{Lip} \phi < \infty$ and $\norm{\nab \phi}_{\m{C}^\al(\R^d)} < \infty$.
When we study more general domains, we merely assume that $\partial\Omega$ is locally Lipschitz so that solutions $u$ of \eqref{eq:main} are continuous up to the boundary.

Throughout, we assume that $f \in \m{C}_{\text{loc}}^{1,\gamma}([0, \infty); \R)$ for some $\gamma \in (0, 1)$, $f(0) = 0$, and $f'(0) < 0$.
We say $f$ is (unbalanced) \emph{bistable} if in addition
\begin{enumerate}[label = \textup{(B)}]
\item
  There exists $\theta \in (0, 1)$ such that $f(\theta) = f(1) = 0$, $f'(\theta) > 0$, $f'(1) < 0$, $f|_{(0, \theta)} < 0,$ $f|_{(\theta, 1)} > 0$, $f|_{(1, \infty)} < 0$, and $\int_0^1 f > 0$.
  \label{hyp:bistable}
\end{enumerate}
Such nonlinearities arise widely in ecology and materials science.
Note that the Allen--Cahn equation, with its symmetric double-well potential, involves a balanced nonlinearity, and thus falls outside our results.

Next, we say $f$ is \emph{field-type} if
\begin{enumerate}[label = \textup{(F)}]
\item There exists $\theta > 0$ such that $f(\theta) = 0$, $f'(\theta) > 0$, $f|_{(0, \theta)} < 0$, and $f|_{(\theta, \infty)} > 0$.
  \label{hyp:field}
\end{enumerate}
This captures a wide range of nonlinear field equations, including $f(u) = -u + u^p$ for $p > 1$.
We depict both types of nonlinearity in Figure~\ref{fig:nonlinearities}.
\begin{figure}[t]
  \centering
  \includegraphics[width = 0.75\linewidth]{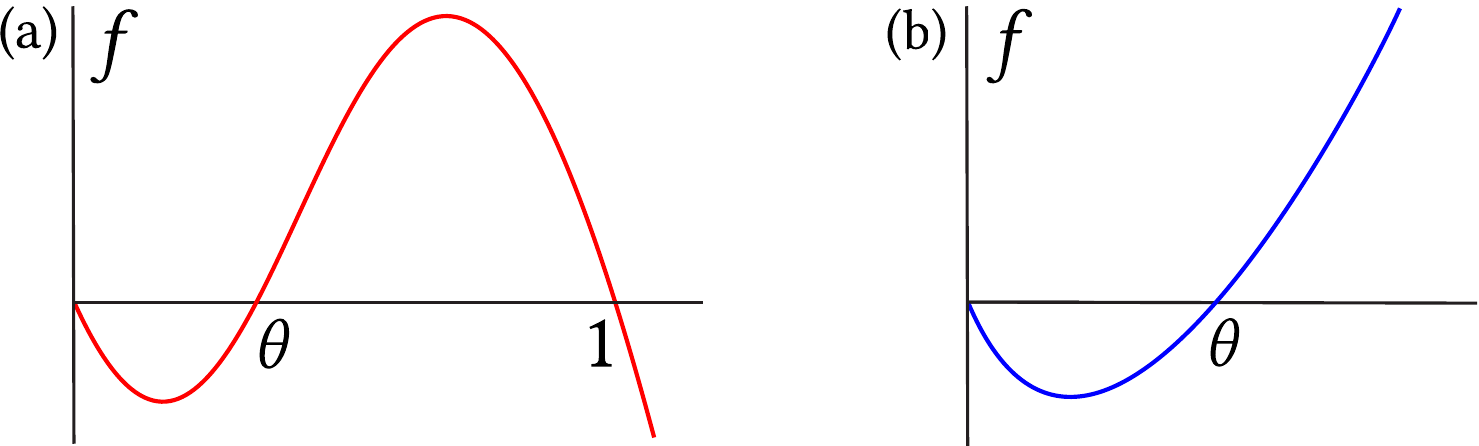}
  \caption{(a) Bistable and (b) field-type nonlinearities.}
  \label{fig:nonlinearities}
\end{figure}

Some of our results further require the existence of a nondegenerate ground state. We express this through the following condition.
\begin{enumerate}[label = \textup{(N)}]
\item
  \label{hyp:ground}
  The equation $-\Delta u = f(u)$ has a positive radially symmetric solution $U$ in $\R^d$ that decays exponentially at infinity.
  Moreover, $U$ is nondegenerate:
  \begin{equation}
    \label{eq:nondegenerate}
    \op{ker}(-\Delta - f' \circ U) \cap L^\infty = \op{span}\{\partial_1 U, \ldots, \partial_d U\}.
  \end{equation}
\end{enumerate}
Several model nonlinearities satisfy this hypothesis, including the field-type $-u + u^p$ for $p \in (1, \frac{d + 2}{d - 2})$ and the bistable $u(u - \theta)(1 - u)$ for $\theta \in (0, 1/2)$.
We collect a number of results ensuring \ref{hyp:ground} in Appendix~\ref{sec:nondegenerate}.

Our arguments make frequent use of the generalized principal eigenvalue of a linear elliptic operator $\m{L}$ on a domain $V$ with Dirichlet boundary conditions.
This notion, introduced by the first author in works with Nirenberg and Varadhan~\cite{BNV} and Rossi~\cite{BR}, is defined by the expression
\begin{equation}
  \label{eq:eigenvalue}
  \lambda(-\m{L}, V) \coloneqq \sup\big\{\lambda : \exists \psi \in W_{\text{loc}}^{2,d}(V) \text{ s.t. } \psi > 0, \, (\m{L} + \lambda) \psi \leq 0\big\}.
\end{equation}

\subsection*{Related works}
There is a vast literature on qualitative properties of semilinear elliptic equations.
Here we discuss a handful of works that are particularly relevant to our study.
\smallskip

The whole space has long been known to admit large families of solutions.
Of these, stable and monotone solutions are of particular importance.
In balanced bistable (Allen--Cahn) equations, de Giorgi famously conjectured that monotone solutions are one-dimensional up to dimension 8~\cite{deGiorgi}.
This insightful conjecture has been largely confirmed in a sequence of remarkable works~\mbox{\cite{GG,AC,Savin}}, while del Pino, Kowalczyk, and the third author have shown the converse in dimensions larger than 8~\cite{dPKW}.
For \emph{unbalanced} nonlinearities like those studied here, Liu, Wang, Wu, and the third author have shown that every stable solution is constant regardless of the dimension~\cite{LWWW}.
We put this result to frequent use here.

A great deal of attention has also focused on ``ground states:'' positive solutions in $\R^d$ that decay at infinity.
As indicated above, these play an important role in the present work.
Using a variational characterization, the first author and Lions constructed ground states for quite general equations~\cite{BL}.
Due to the seminal work of Gidas, Ni, and Nirenberg~\cite{GNN79}, these solutions are radially symmetric.
Using this symmetry, Peletier and Serrin~\cite{PS} and Kwong~\cite{Kwong} proved the uniqueness of ground states for certain bistable and nonlinear field equations, respectively.
\smallskip

The introduction of Dirichlet boundary can dramatically restrict the set of solutions in epigraphs.
This was first observed by Esteban and Lions~\cite{EL}, who ruled out ground states in coercive epigraphs.
Exchanging coercivity for positivity in the nonlinearity, Caffarelli, Nirenberg, and the first author showed the uniqueness of positive bounded solutions in uniformly Lipschitz epigraphs~\cite{BCN97b}.
The first two authors have since relaxed the need for a global Lipschitz bound~\cite{BG23b}.
After completing the present work, we learned of a simultaneous effort~\cite{BFS} that demonstrates the monotonicity of solutions in epigraphs that are bounded from below.
\smallskip

In the special case of the half-space, the first two authors have proved the uniqueness of solutions for several classes of nonlinearity~\cite{BG22}.
Dupaigne and Farina~\cite{DupFar} have likewise classified solutions with nonnegative nonlinearity in sufficiently low dimensions.
The recent work~\cite{LWWW} of the third author with Liu, Wang, and Wu establishes uniqueness in all dimensions for quite general nonlinearities.
\smallskip

In the present paper, we focus on bistable and field-type nonlinearities.
Another widely-studied class consists of nonnegative, convex, superlinear nonlinearities, exemplified by the Lane--Emden equation with $f(u) = u^p$.
In this context, Cabr\'e, Figalli, Ros-Oton, and Serra~\cite{CFROS} recently resolved a well-known conjecture of Brezis, showing that stable but possibly singular solutions of \eqref{eq:main} in bounded domains are in fact smooth provided $d \leq 9$ (which is sharp).
For the Lane--Emden equation, Farina~\cite{Farina} has established a number of results linking uniqueness and stability in unbounded domains subject to (often sharp) conditions on the power $p$ and the dimension $d$.
Despite their superficial similarity, we note that the Lane--Emden equation $f(u) = u^p$ and the nonlinear-field equation $f(u) = -u + u^p$ behave quite differently due to contrasting linearizations at zero.

\subsection*{Organization}
We establish uniqueness in epigraphs that are bounded from below (Theorems~\ref{thm:unique} and \ref{thm:field-unique}) in Section~\ref{sec:unique}.
In Section~\ref{sec:multiple}, we construct many solutions in domains with aperture greater than $\pi$, and thus prove Theorems~\ref{thm:multiple} and \ref{thm:field-multiple}.
We study the linear stability of solutions and prove Theorem~\ref{thm:stable} in Section~\ref{sec:stable}.
We collect known results regarding the nondegeneracy of ground states in Appendix~\ref{sec:nondegenerate}.

\section*{Acknowledgments}
HB has received funding from the French ANR project ANR-23-CEE40-0023-01 ReaCh.
CG is partially supported by the National Science Foundation under DMS-2406946.
JW is partially supported by the General Research Fund of Hong Kong through ``New frontiers in singularity formations of nonlinear partial differential equations.''

\section{Uniqueness}
\label{sec:unique}

In this section, we show uniqueness results leading to Theorems~\ref{thm:unique} and \ref{thm:field-unique}.
Throughout, we assume that $\Omega$ is a uniformly Lipschitz epigraph that is bounded from below, so $\op{Lip} \phi < \infty$ and $\inf \phi > -\infty$.
We are then free to shift $\Omega$ so that $\inf \phi = 0$.
Our approach relies on the continuity of the principal eigenvalue with respect to the potential:
\begin{lemma}
  \label{lem:eig-stable}
  Let $q(1) = 1$, $q(2) = 3/2$, and $q(d) \coloneqq d/2$ for $d \geq 3$.
  There exists $C(d) \geq 1$ such that if $\norm{V}_{L^q} \leq C^{-1}$, then
  \begin{equation*}
    -C \norm{V}_{L^q} \leq \lambda(-\Delta + V, B_1) - \lambda(-\Delta, B_1) \leq C \norm{V}_{L^1}.
  \end{equation*}
\end{lemma}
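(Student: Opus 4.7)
The plan is to compare Rayleigh quotients for $-\Delta$ and $-\Delta + V$ on $H_0^1(B_1)$. Because $B_1$ is a smooth bounded domain and (for $q = q(d)$) the potential $V \in L^q$ is sufficiently regular, the generalized principal eigenvalue \eqref{eq:eigenvalue} coincides with the variational one
\[ \lambda(-\Delta + V, B_1) = \inf_{\psi \in H_0^1(B_1) \setminus \{0\}} \frac{\int_{B_1}(|\nabla \psi|^2 + V \psi^2)}{\int_{B_1} \psi^2}, \]
and the unperturbed problem has a principal eigenfunction $\psi_0 \in H_0^1(B_1) \cap L^\infty(B_1)$, which I normalize so that $\int_{B_1} \psi_0^2 = 1$.

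For the upper bound, I test the Rayleigh quotient of $-\Delta + V$ with $\psi_0$:
\[ \lambda(-\Delta + V, B_1) \leq \int_{B_1}(|\nabla \psi_0|^2 + V \psi_0^2) = \lambda(-\Delta, B_1) + \int_{B_1} V \psi_0^2 \leq \lambda(-\Delta, B_1) + \|\psi_0\|_{L^\infty}^2 \|V\|_{L^1}, \]
and $\|\psi_0\|_{L^\infty}$ depends only on $d$.

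For the lower bound, the crucial point is that $q(d)$ is tailored so that the conjugate exponent $q'$ (with $1/q + 1/q' = 1$) makes $2q'$ match the critical Sobolev exponent: $2q' = 2d/(d-2)$ for $d \geq 3$, $2q' = 6$ for $d = 2$, and $2q' = \infty$ for $d = 1$. In each case, $H_0^1(B_1) \hookrightarrow L^{2q'}(B_1)$, so H\"older and Sobolev give, for every $\psi \in H_0^1(B_1)$,
\[ \left|\int_{B_1} V \psi^2\right| \leq \|V\|_{L^q} \|\psi\|_{L^{2q'}}^2 \leq C_0 \|V\|_{L^q} \int_{B_1} |\nabla \psi|^2, \]
for some dimensional constant $C_0$. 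Provided $\|V\|_{L^q} \leq 1/(2C_0)$, this yields
\[ \int_{B_1}(|\nabla \psi|^2 + V \psi^2) \geq (1 - C_0 \|V\|_{L^q}) \int_{B_1} |\nabla \psi|^2 \geq (1 - C_0 \|V\|_{L^q}) \lambda(-\Delta, B_1) \int_{B_1} \psi^2. \]
Dividing by $\int_{B_1} \psi^2$ and taking the infimum over $\psi$ gives the lower bound with a constant proportional to $C_0 \lambda(-\Delta, B_1)$.

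I do not anticipate a serious obstruction: the approach is essentially perturbation via the Rayleigh quotient, with the only mildly delicate point being the identification of \eqref{eq:eigenvalue} with the variational principal eigenvalue on $B_1$ for rough potentials $V \in L^q$. This follows from standard weak-formulation theory, since the weak principal eigenfunction lies in $H_0^1(B_1)$ (and is bounded by Moser iteration once $q \geq d/2$).
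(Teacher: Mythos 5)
Your proposal is correct and matches the paper's argument in substance: the upper bound uses $\psi_0$ as a test function in the Rayleigh quotient, and the lower bound combines Hölder with the Sobolev embedding $H_0^1(B_1)\hookrightarrow L^{2q'}(B_1)$ (with $q(d)$ chosen precisely so this embedding is available) to absorb $\int V\psi^2$ into the gradient term. The paper phrases the lower bound by testing $\lambda_0$ with the perturbed eigenfunction $\phi_V$ rather than taking an infimum over all $\psi$, but the two presentations are algebraically identical.
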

In fact, $q(2)$ can assume any value greater than $1$, but the constant $C$ depends on its value.
We will not need this refinement.
\begin{proof}
  Given a potential $W \colon B_1 \to \R,$ let $\lambda_W \coloneqq \lambda(-\Delta + W, B_1)$.
  Let $\phi_W > 0$ denote the corresponding principal eigenfunction satisfying $\norm{\phi_W}_{L^2} = 1$.
  We use the Rayleigh quotient to control $\lambda_V - \lambda_0$.

  First, using $\phi_0$ as a test function for $\lambda_V$, we see that in all dimensions,
  \begin{equation*}
    \lambda_V - \lambda_0 \leq \int_{B_1} V \phi_0^2 \leq \norm{\phi_0}_{L^\infty}^2 \norm{V}_{L^1} \eqqcolon C_1(d) \norm{V}_{L^1}.
  \end{equation*}
  
  Next, using $\phi_V$ as a test function for $\lambda_0,$ we find $\lambda_0 \leq \int \abs{\nab\phi_V}^2 \eqqcolon D$.
  Assume for the moment that $d \geq 3$.
  Then the Sobolev inequality yields $\norm{\phi_V}_{L^{2_*}}^2 \leq C_2(d) D$ for $2_* = \frac{2d}{d-2} \in (2, \infty)$.
  It follows from H\"older that
  \begin{equation}
    \label{eq:Holder}
    \lambda_V = D + \int_{B_1} V \phi_V^2 \geq D - \norm{V}_{L^{d/2}} \norm{\phi_V}_{L^{2_*}}^2 \geq D(1 - C_2 \norm{V}_{L^{d/2}}).
  \end{equation}
  Assume $\norm{V}_{L^{d/2}} \leq C_2^{-1}$.
  Using $\lambda_0 \leq D$ in \eqref{eq:Holder}, we find $\lambda_V \geq (1 - C_2\norm{V}_{L^{d/2}}) \lambda_0.$
  That is,
  \begin{equation*}
    \lambda_V - \lambda_0 \geq -C_2 \lambda_0 \norm{V}_{L^{d/2}} \eqqcolon -C_3(d) \norm{V}_{L^{d/2}}.
  \end{equation*}
  The low-dimensional statements follow from similar reasoning.
\end{proof}
We use this lemma in the following form:
\begin{corollary}
  \label{cor:thin}
  For all $a \in \R,$ $\mathbf{e} \in S^{d-1}$, and $\eps > 0$, there exists $\eta(\abs{a}, \eps, d) > 0$ such that for every open $S \subset \R^d$ satisfying $\m{H}^1(S \cap \ell) \leq \eta$ for all lines $\ell \subset \R^d$ parallel to $\mathbf{e}$,
  \begin{equation*}
    \abss{\lambda(-\Delta + a \tbf{1}_{S}, \R^d)} \leq \eps.
  \end{equation*}
\end{corollary}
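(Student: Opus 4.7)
After a rotation assume $\mathbf{e} = e_d$ and write $x = (x', x_d) \in \R^{d-1} \times \R$; the hypothesis says that the slice $T(x') := \{x_d : (x', x_d) \in S\}$ has $\m{H}^1$-measure at most $\eta$ for every $x' \in \R^{d-1}$. Set $V := a \tbf{1}_S$. Since $V$ is bounded, on each ball $B_R$ the generalized principal eigenvalue coincides with the Dirichlet principal eigenvalue, i.e., the infimum of the Rayleigh quotient $(\int |\nabla \phi|^2 + V \phi^2)/\|\phi\|_{L^2}^2$ over $\phi \in H_0^1(B_R) \setminus \{0\}$, and by the standard approximation $\lambda(-\Delta + V, \R^d) = \lim_{R \to \infty} \lambda(-\Delta + V, B_R)$. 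It therefore suffices to bound this Rayleigh quotient uniformly in $R$.

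For the lower bound I use the one-dimensional Gagliardo--Nirenberg inequality $\|f\|_{L^\infty(\R)}^2 \leq 2 \|f\|_{L^2(\R)} \|f'\|_{L^2(\R)}$ slice-wise. For $\phi \in H_0^1(B_R)$, extended by zero to $\R^d$, and for each $x' \in \R^{d-1}$,
$$\int_{T(x')} \phi(x', x_d)^2 \, dx_d \leq \eta \, \|\phi(x', \cdot)\|_{L^\infty(\R)}^2 \leq 2\eta \, \|\phi(x', \cdot)\|_{L^2(\R)} \, \|\partial_d \phi(x', \cdot)\|_{L^2(\R)}.$$
Integration over $x'$, followed by Cauchy--Schwarz and then Young's inequality, gives
$$\Big|\int_{\R^d} V \phi^2 \Big| \leq 2|a| \eta \, \|\phi\|_{L^2(\R^d)} \, \|\partial_d \phi\|_{L^2(\R^d)} \leq |a|^2 \eta^2 \|\phi\|_{L^2}^2 + \|\partial_d \phi\|_{L^2}^2.$$
Thus $\int |\nabla \phi|^2 + V \phi^2 \geq -|a|^2 \eta^2 \|\phi\|_{L^2}^2$, and taking the infimum then sending $R \to \infty$ yields $\lambda(-\Delta + V, \R^d) \geq -|a|^2 \eta^2$.

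For the upper bound I test with cutoffs $\phi_R(x) := \chi(x/R) \in H_0^1(B_R)$ for a fixed bump $\chi \in C_c^\infty(B_{1/2})$. Direct computation gives $\|\nabla \phi_R\|_{L^2}^2/\|\phi_R\|_{L^2}^2 = O(R^{-2})$, while the thinness bound $|S \cap B_R| \leq C(d) \eta R^{d-1}$ yields $\int V\phi_R^2/\|\phi_R\|_{L^2}^2 = O(|a|\eta/R)$; both vanish as $R \to \infty$, so $\lambda(-\Delta + V, \R^d) \leq 0$. Combined with the lower bound, $|\lambda(-\Delta + V, \R^d)| \leq |a|^2 \eta^2$, and the conclusion follows by choosing $\eta \leq \sqrt{\eps}/(1 + |a|)$. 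No serious obstacle arises; the crux is recognizing that the one-dimensional embedding along the direction of thinness is the right tool. A direct $d$-dimensional Sobolev bound in the spirit of Lemma~\ref{lem:eig-stable} fails here because $\|V\|_{L^{d/2}(\R^d)} = |a| |S|^{2/d}$ may be infinite even when each fiber of $S$ is short.
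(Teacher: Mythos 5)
Your proof is correct, but it takes a genuinely different and more elementary route than the paper. The paper's argument first invokes a variant of Lieb's localization inequality (\cite[Theorem~3.5]{BG23b}) to replace $\lambda(-\Delta + a\tbf{1}_S, \R^d)$ by an infimum of eigenvalues on balls $B_R(x)$ up to an $\al R^{-2}$ error, then uses the quantitative continuity-in-potential estimate of Lemma~\ref{lem:eig-stable} (with a dimension-dependent exponent $q(d)$) together with Fubini on each ball. You instead estimate the Rayleigh quotient on $\R^d$ directly: after identifying $\lambda(-\Delta + V, \R^d)$ with the limit of the classical Dirichlet eigenvalues on $B_R$, the slice-wise Gagliardo--Nirenberg bound $\|f\|_{L^\infty(\R)}^2 \leq 2\|f\|_{L^2}\|f'\|_{L^2}$ in the thin direction $e_d$, followed by Cauchy--Schwarz in $x'$ and Young, gives the clean lower bound $\lambda \geq -\abs{a}^2\eta^2$; the upper bound $\lambda \leq 0$ follows from cutoff test functions. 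Your approach dispenses with both Lieb's localization and Lemma~\ref{lem:eig-stable}, avoids the low-dimensional case distinctions in that lemma, and yields a transparent quantitative dependence $\eta \sim \sqrt{\eps}/\abs{a}$; the key insight, which you articulate well, is that the one-dimensional Sobolev embedding along the thin direction is exactly the right tool since $\norm{V}_{L^{d/2}(\R^d)}$ can be infinite. The paper's route, by contrast, leverages machinery (Lieb localization, $L^q$ eigenvalue continuity) that it has already set up and reuses elsewhere, so within the paper's economy it is not redundant, but as a standalone proof of this corollary yours is shorter and self-contained.
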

\begin{proof}
  By a variant \cite[Theorem~3.5]{BG23b} of Lieb's localization inequality~\cite{Lieb},
  \begin{equation*}
    \absb{\inf_{x \in \R^d} \lambda\big(\!-\Delta + a \tbf{1}_S, B_R(x)\big) - \lambda(-\Delta + a \tbf{1}_S, \R^d)} \leq \al R^{-2}
  \end{equation*}
  where $\al(d) \coloneqq \lambda(-\Delta, B_1) > 0$.
  Choose $R(\eps, d) \coloneqq \sqrt{3\al/\eps}$, so $\al R^{-2} = \eps/3$.

  We can scale Lemma~\ref{lem:eig-stable} to produce $q(d) \in [1, \infty)$ and $C(\eps, d) > 1$ such that
  \begin{equation*}
    \abs{\lambda\big(\!-\Delta + a \tbf{1}_S, B_R(x)\big) - \lambda(-\Delta, B_R)} \leq C \norm{a \tbf{1}_S}_{L^{q}(B_R(x))} = C \abs{a} \abs{S \cap B_R(x)}^{1/q}
  \end{equation*}
  provided $\abs{a} \abs{S \cap B_R(x)}^{1/q} \leq C^{-1}$.
  Now Fubini implies that $\abs{S \cap B_R(x)} \leq \abss{B_R^{d-1}} \eta$, where $B_R^{d-1}$ denotes the ball of radius $R$ in $\R^{d-1}$.
  Recalling that $R = R(\eps, d)$, we fix $\eta(\abs{a}, \eps, d) > 0$ such that
  \begin{equation*}
    C \abs{a} \big(\abss{B_R^{d-1}} \eta\big)^{1/q} = \min\{\eps/3, 1\}.
  \end{equation*}
  Then we have
  \begin{equation*}
    \abs{\lambda\big(\!-\Delta + a \tbf{1}_S, B_R(x)\big) - \lambda(-\Delta, B_R)} \leq \frac{\eps}{3}.
  \end{equation*}
  Finally, $\lambda(-\Delta, B_R) = \al R^{-2} = \eps/3$, so the corollary follows from the triangle inequality.
\end{proof}
Next, we apply the stable-compact framework of~\cite{BG23b} to the method of moving planes~\cite{Alexandrov,Serrin} to show that solutions of \eqref{eq:main} are monotone in $y$.
\begin{proposition}
  \label{prop:monotone}
  Suppose $f$ is bistable \ref{hyp:bistable} or field-type \ref{hyp:field}.
  Then every positive bounded solution $u$ of \eqref{eq:main} satisfies $\partial_y u > 0$.
\end{proposition}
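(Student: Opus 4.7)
Shift $\Omega$ so $\inf\phi = 0$. For $\lambda > 0$, let $T_\lambda = \{y = \lambda\}$, write $x^\lambda = (x', 2\lambda - y)$ for reflection across $T_\lambda$, and introduce the comparison region and difference
\[
  \Sigma_\lambda = \{x \in \Omega : y > \lambda, \ x^\lambda \in \Omega\},
  \qquad
  w_\lambda(x) = u(x) - u(x^\lambda).
\]
Then $-\Delta w_\lambda = c_\lambda w_\lambda$ in $\Sigma_\lambda$, where $c_\lambda(x) = \int_0^1 f'(t u(x) + (1-t) u(x^\lambda))\, dt$ is bounded. Because $\Omega$ is an epigraph, the boundary $\partial \Sigma_\lambda$ decomposes into $T_\lambda \cap \overline\Omega$ (where $w_\lambda = 0$) and $\{x \in \Omega : y > \lambda, \ x^\lambda \in \partial\Omega\}$ (where $w_\lambda = u(x) \geq 0$); the candidate piece $\{x \in \partial\Omega : y > \lambda, \ x^\lambda \in \Omega\}$ is empty because it would require $\phi(x') > \lambda$ and $\phi(x') < \lambda$ simultaneously. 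Hence $w_\lambda \geq 0$ on $\partial\Sigma_\lambda$. The plan is to propagate $w_\lambda \geq 0$ into the interior via the moving planes method, then send $\lambda \downarrow 0$.

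\textbf{Step 1 (starting the planes).} I would first show that $w_\lambda \geq 0$ throughout $\Sigma_\lambda$ for all $\lambda$ sufficiently large. Since $\Sigma_\lambda$ can be unbounded whenever $\sup\phi = \infty$, the classical maximum principle does not apply directly. Following the stable-compact strategy of~\cite{BG23b}, I choose $\delta > 0$ small enough that $f'$ is strictly negative on an interval around the relevant stable root of $f$ ($u = 1$ in the bistable case, $u = 0$ in the field case), and define the bad set
\[
  E_\lambda = \{x \in \Sigma_\lambda : \text{$u(x)$ or $u(x^\lambda)$ lies outside this interval}\}.
\]
On $\Sigma_\lambda \setminus E_\lambda$, the potential satisfies $c_\lambda \leq -\alpha < 0$. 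Imitating the argument of Berestycki--Nirenberg~\cite{BN91} for epigraphs, I decompose the sublevel set $\{\phi < \lambda\}$ so that $E_\lambda$ is exhibited as a union of thin vertical slabs: one adjacent to $\partial\Omega$, one adjacent to the reflected boundary $\{y = 2\lambda - \phi(x')\}$, and a ``pinch'' slab near $\{\phi(x') \approx \lambda\}$, each of thickness controlled by the Lipschitz bound on $\phi$. Corollary~\ref{cor:thin} then lets me absorb the contribution of these slabs to the potential, yielding $\lambda(-\Delta - c_\lambda, \Sigma_\lambda) > 0$. The maximum principle in unbounded domains, available under this positive principal-eigenvalue condition, now forces $w_\lambda \geq 0$.

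\textbf{Step 2 (continuation and strictness).} Let $\lambda^* = \inf\{\lambda_0 > 0 : w_\lambda \geq 0 \text{ in } \Sigma_\lambda \text{ for all } \lambda \geq \lambda_0\}$, which by Step~1 is finite. If $\lambda^* > 0$, continuity gives $w_{\lambda^*} \geq 0$; the strong maximum principle and Hopf's lemma on compact pieces of $\Sigma_{\lambda^*}$ produce strict inequality and a sign for the normal derivative along $T_{\lambda^*}$. A perturbation argument, again absorbing the unbounded tail via the stable-compact estimate, extends $w_\lambda \geq 0$ to $\lambda$ slightly below $\lambda^*$, contradicting minimality. Hence $\lambda^* \leq 0$, and unwinding the reflections gives $u(x', y_1) \leq u(x', y_2)$ whenever $\phi(x') < y_1 \leq y_2$, i.e.\ $\partial_y u \geq 0$. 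Since $\partial_y u$ solves $-\Delta (\partial_y u) = f'(u)\, \partial_y u$ and is not identically zero (else $u$ would depend only on $x'$ and the boundary condition would force $u \equiv 0$), the strong maximum principle upgrades this to $\partial_y u > 0$.

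\textbf{Main obstacle.} The heart of the argument is Step~1: securing a maximum principle in the unbounded domain $\Sigma_\lambda$. The crucial geometric input is the decomposition of $E_\lambda$ into thin vertical slabs that can be fed into Corollary~\ref{cor:thin}; without it, the naive moving planes method breaks down because $\Sigma_\lambda$ need not be thin in any direction when $\phi$ has unbounded superlevel sets. The bistable and field-type cases differ only in the choice of stable root anchoring the definition of $E_\lambda$; the slab structure of the bad set and the resulting eigenvalue estimate are identical in both.
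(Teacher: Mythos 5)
Your plan correctly identifies the central tool (the stable--compact decomposition and Corollary~\ref{cor:thin}), but the structure you ascribe to the bad set $E_\lambda$ is not right, and the argument as written is circular. You anchor the stable region at $u=1$ in the bistable case, so that $E_\lambda$ is where $u(x)$ or $u(x^\lambda)$ is not near $1$. For this to be a thin collar of $\partial\Omega$ you would need $u\to 1$ uniformly away from the boundary---but that is Lemma~\ref{lem:limit}, which the paper proves \emph{using} Proposition~\ref{prop:monotone}. If you instead anchor at $u=0$ (exploiting $f'(0)<0$ and the Dirichlet condition, as the paper does), the thin slab near $\partial\Omega$ is handled, but $E_\lambda$ then contains the entire interior region where $u$ is of moderate size; that region is not thin, the potential there may be as large as $\sup f'>0$, and your claimed estimate $\lambda(-\Delta-c_\lambda,\Sigma_\lambda)>0$ will in general fail.

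The missing ingredient is a genuinely \emph{compact} component. The paper defines $\Sigma_-$ to be the set where $u\geq s$ and the point is $\delta$-separated from $\partial\Sigma^{\mu_*}$, and shows $\inf_{\Sigma_-}w^{\mu_*}>0$ by a compactness--contradiction argument; the hypothesis $\inf\phi>-\infty$ is what pins $y$ in a compact interval $[\delta,\mu_*-\delta]$ so that horizontal translates of $(u,\phi)$ have convergent subsequences to which the strong maximum principle applies. Corollary~\ref{cor:thin} is then invoked only on the remaining stable region $\Sigma_+^\mu$, where $u$ is small outside a thin slab $S$. Your proposal has no analogue of $\Sigma_-$, and that is the essential gap. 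There is also an orientation error: you launch the iteration at large $\lambda$, but your $\Sigma_\lambda$ has $y$-extent $\lambda-\phi(x')$, which is large for large $\lambda$. The easy starting case for an epigraph bounded from below---and the one the paper exploits---is small $\lambda$ (small $\mu$ in its convention), where the reflected region sits in a slab of height at most $\lambda$ and the Dirichlet eigenvalue of $-\Delta$ alone dominates $\op{Lip}f$.
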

\noindent
This is the only component of the proof that requires $\Omega$ to be bounded from below.
\begin{proof}
  Given $\mu > 0$, define the sub-level set
  \begin{equation*}
    \Sigma^\mu \coloneqq \{(x', y) \in \Omega : y < \mu\}.
  \end{equation*}
  In $\Sigma^\mu$, define the reflection $v^\mu(x', y) \coloneqq u(x', 2\mu - y)$ and the difference $w^\mu \coloneqq v^\mu - u$.
  This satisfies the linear equation
  \begin{equation}
    \label{eq:diff}
    \begin{cases}
      -\Delta w^\mu = q^\mu w^\mu & \text{in } \Sigma^\mu,\\
      w^\mu \gneqq 0 & \text{on } \partial \Sigma^\mu,
    \end{cases}
  \end{equation}
  for $q^\mu \coloneqq \frac{f(v^\mu) - f(u)}{v^\mu - u} \tbf{1}_{v^\mu \neq u}$, which is bounded because $f$ is Lipschitz.
  The boundary condition in \eqref{eq:diff} holds because $w^\mu = 0$ on $\{y = \mu\}$ and $v^\mu > 0 = u$ on $\partial \Omega \cap \{y < \mu\}$.
  It suffices to show that $w^\mu \geq 0$ for all $\mu > 0$, for then $u$ is increasing in $y$ and $\partial_y u > 0$ by the strong maximum principle.
  
  Because $\Omega$ is bounded from below by $\{y = 0\}$, $\Sigma^\mu$ is contained in the slab $\R^{d-1} \times (0, \mu)$.
  It follows from Lemma~3.2 of~\cite{BG23b} that
  \begin{equation*}
    \lambda(-\Delta - q^\mu, \Sigma^\mu) \geq \lambda\big(\!-\Delta, \R^{d-1} \times (0, \mu)\big) - \Lip f = \pi^2 \mu^{-2} - \Lip f.
  \end{equation*}
  Hence when $\mu$ is sufficiently small, $\lambda(-\Delta - q^\mu, \Sigma^\mu) > 0$, \eqref{eq:diff} satisfies the maximum principle, and $w^\mu \geq 0$.

  Define
  \begin{equation}
    \label{eq:mu-crit}
    \mus \coloneqq \inf\{\mu > 0 : w^\mu \not\geq 0 \text{ in } \Sigma^\mu\}.
  \end{equation}
  By the above argument, $\mus > 0.$
  Suppose for the sake of contradiction that $\mus < \infty.$
  By continuity, we have $w^\mus \geq 0$ in $\Sigma^\mus$.

  Because $f$ is $\m{C}^1$ and $f'(0) < 0$, there exists $s > 0$ such that
  \begin{equation*}
    \sup_{[0, s]} f' \leq f'(0)/2 < 0.
  \end{equation*}
  We apply Corollary~\ref{cor:thin} with $a = -2 \Lip f$, $\tbf{e} = e_y$, and $\eps = \abs{f'(0)}/4$ to obtain a length $\eta(a, \eps, d) > 0$.
  We employ the corresponding eigenvalue bound below.
  
  Schauder estimates up to the boundary (e.g., \cite[Lemma~1.2.4]{Kenig}) imply that $u$ is uniformly continuous.
  Hence there exists $\delta \in (0, \eta/3]$ such that $\abs{u(x) - u(z)} \leq s$ for all $x, z \in \Omega$ such that $\abs{x - z} \leq 2\delta$.
  
  We define the ``compact part''
  \begin{equation}
    \label{eq:compact}
    \Sigma_- \coloneqq \big\{(x', y) \in \Sigma^\mus : u(x', y) \geq s \enspace\text{and}\enspace \min\{y - \phi(x'), \mus - y\} \geq \delta\big\}.
  \end{equation}
  Loosely, $\Sigma_-$ is the set $\delta$-isolated from $\partial \Sigma^\mus$ where $u$ is moderately large.
  
  We claim that $\inf_{\Sigma_-} w^\mus > 0$.
  To see this, suppose to the contrary that there exists a sequence $x_n = (x_n', y_n) \in \Sigma_-$ such that $w^\mus(x_n) \to 0$ as $n \to \infty$.
  We horizontally recenter and define
  \begin{equation*}
    u_n(x', y) \coloneqq u(x' + x_n', y) \And \phi_n(x') \coloneqq \phi(x' + x_n').
  \end{equation*}
  We observe that $y_n$ lies in the compact interval $[\delta, \mus - \delta]$.
  Due to the uniform regularity of $u$ and $\phi$, we can thus extract a subsequential limit $(y_\infty, u_\infty, \phi_\infty)$ of $(y_n, u_n, \phi_n)$ as $n \to \infty$.
  We accordingly construct $\Omega_\infty$, $\Sigma_\infty^\mus$, $v_\infty^\mus$, and $w_\infty^\mus$.
  
  The definition of $\Sigma_-$ implies that $u_\infty(0, y_\infty) \geq s$ and $(0, y_\infty)$ is $\delta$-separated from $\partial \Sigma_\infty^\mus$; in particular, $(0, y_\infty) \in \Sigma_\infty^\mus$.
  Next, interior Schauder estimates imply that $u_\infty$ and $w_\infty^\mus$ satisfy analogues of the elliptic equations \eqref{eq:main} and \eqref{eq:diff}, respectively.
  By the strong maximum principle, $u_\infty > 0$ in $\Omega_\infty$.
  Hence $v_\infty^\mus > 0$ on $\partial \Omega_\infty \cap \{y < \mus\}$, and a second application of the strong maximum principle yields $w_\infty^\mus > 0$ in $\Sigma_\infty^\mus$.
  However, the definition of the sequence $(x_n)_n$ implies that $w_\infty^\mus(0, y_\infty) = 0$, contradicting $(0, y_\infty) \in \Sigma_\infty^\mus$.
  So indeed $\inf_{\Sigma_-} w_\mus > 0$.

  Given $\mu \in [\mus, \mus + \delta)$, we define the ``stable part'' $\Sigma_+^\mu \coloneqq \Sigma^\mu \setminus \Sigma_-$.
  We emphasize that $\Sigma_+^\mu$ varies in $\mu$ while $\Sigma_-$ does not.
  Next, let
  \begin{equation*}
    S \coloneqq \big\{(x', y) \in \Sigma^\mu : y - \phi(x') \leq \delta \text{ or } \mu - y \leq 2 \delta\big\}.
  \end{equation*}
  We observe that $\m{H}^1(S \cap \ell) \leq 3 \delta \leq \eta$ for all lines $\ell$ parallel to $e_y$.
  That is, $S$ satisfies the hypothesis of Corollary~\ref{cor:thin}, so by our choice of $\eta$,
  \begin{equation}
    \label{eq:wiggle}
    \absb{\lambda\big(\!-\Delta -2 \Lip f \,\tbf{1}_S, \, \R^d\big)} \leq \frac{\abs{f'(0)}}{4}.
  \end{equation}
  Next, because $\mu - \mus < \delta$,
  \begin{equation*}
    S \supset \big\{(x', y) \in \Sigma^\mus : \min\{y - \phi(x'), \mus - y\} < \delta\big\}.
  \end{equation*}
  From the definition of $\Sigma_\pm$, we see that $\Sigma_+^\mu \setminus S \subset \Sigma^\mus$ and $u \leq s$ in $\Sigma_+^\mu \setminus S$.

  We claim that $w^\mu$ satisfies the following maximum principle: if $w^\mu \geq 0$ on $\partial \Sigma_+^\mu$, then $w^\mu \geq 0$ in $\Sigma_+^\mu$.
  To see this, suppose $w^\mu|_{\partial \Sigma_+^\mu} \geq 0$ and consider the contrary set $Q \coloneqq \{w^\mu < 0\} \cap \Sigma_+^\mu$.
  The boundary hypothesis implies that $w = 0$ on $\partial Q$.
  Also, on $Q \setminus S$ we have $v^\mu < u \leq s$.
  Thus by the mean value theorem and the definition of $s$,
  \begin{equation*}
    q^\mu = \frac{f(v^\mu) - f(u)}{v^\mu - u} \leq \frac{f'(0)}{2} \quad \text{in } Q \setminus S.
  \end{equation*}
  Combining this with \eqref{eq:wiggle}, we obtain
  \begin{align*}
    \lambda(-\Delta - q^\mu, Q) &\geq \lambda\Big(\!-\Delta + \frac{\abs{f'(0)}}{2} - 2 \Lip f \, \tbf{1}_S, Q\Big)\\
                                &\geq \frac{\abs{f'(0)}}{2} + \lambda(-\Delta - 2 \Lip f \, \tbf{1}_S, \R^d) \geq \frac{\abs{f'(0)}}{4}.
  \end{align*}
  In particular, $\lambda(-\Delta - q^\mu, Q) > 0$, so by the maximum principle $w^\mu|_Q = 0$.
  That is, $Q$ is in fact empty and $w^\mu \geq 0$ in $\Sigma_+^\mu$, as claimed (still under the hypothesis $w^\mu|_{\partial \Sigma_+^\mu} \geq 0$).
  
  Now recall that $\inf_{\Sigma_-} w^\mus > 0$.
  The uniform continuity of $u$ implies that $w^\mu$ is uniformly continuous in $\mu$.
  Thus there exists $\eps \in (0, \delta]$ such that for all $\mu \in [\mus, \mus + \eps)$, $\inf_{\Sigma_-} w^\mu \geq 0$ and hence $w^\mu|_{\partial \Sigma_+^\mu} \geq 0$.
  By the maximum principle shown above, $w^\mu \geq 0$ in $\Sigma_+^\mu$ as well.
  Thus $w^\mu \geq 0$ in $\Sigma^\mu = \Sigma_- \cup \Sigma_+^\mu$ for all $\mu \in [\mus, \mus + \eps)$, contradicting the definition of $\mus$.
  So in fact $\mus = \infty$ and $w^\mu \geq 0$ always; this completes the proof.
\end{proof}
We next record the upper semicontinuity of the generalized principle eigenvalue in the domain and potential.
Similar results have appeared before, for example Proposition~5.3 in~\cite{BHRossi} and Lemma~2.4 of~\cite{BG25}.
We unify these and extend them to a lower-regularity setting.
We recall the \emph{connected} limit of a sequence of domains from Definitions~2.3 and A.3 of \cite{BG25}, which extracts a connected component from a sequence of domains converging locally uniformly.
These are formulated for uniformly $\m{C}^{2,\al}$ domains, but the definition readily extends to uniformly Lipschitz domains: we simply ask the relevant functions to converge in $\m{C}^\al$ for some $\al < 1$.
\begin{lemma}
  \label{lem:semicontinuous}
  Consider a sequence of domains $\Omega_n$ and potentials $V_n \colon \Omega_n \to \R$ such that $(\Omega_n)_n$ is uniformly Lipschitz and $V_n$ is uniformly $\m{C}^\gamma$ for some $\gamma \in (0, 1)$.
  If $(\Omega_*, V_*)$ is the connected limit of $(\Omega_n, V_n)_n$, then
  \begin{equation*}
    \lambda(-\Delta + V_*, \Omega_*) \geq \lim_{n \to \infty} \lambda(-\Delta + V_n, \Omega_n).
  \end{equation*}
\end{lemma}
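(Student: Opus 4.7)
The plan is to combine two classical ingredients: the exhaustion characterization of the generalized principal eigenvalue, and its continuity under uniform perturbation of a potential on a fixed bounded domain. Set $\lambda_n \coloneqq \lambda(-\Delta+V_n, \Omega_n)$ and $\lambda_* \coloneqq \lambda(-\Delta + V_*, \Omega_*)$. I will show $\lambda_* \geq \limsup_n \lambda_n$, which implies the stated inequality whenever the $\lim$ exists.

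First, I fix an exhaustion $K_1 \Subset K_2 \Subset \cdots$ of $\Omega_*$ by smooth bounded subdomains with $\bigcup_j K_j = \Omega_*$. On each such $K_j$, the generalized principal eigenvalue $\lambda(-\Delta+V_*, K_j)$ agrees with the classical Dirichlet principal eigenvalue. By the standard Berestycki--Nirenberg--Varadhan exhaustion theorem together with the monotonicity of $\lambda$ under domain inclusion, $\lambda(-\Delta+V_*, K_j) \searrow \lambda_*$ as $j \to \infty$. Given $\eps > 0$, pick $j$ so that $\lambda(-\Delta + V_*, K_j) \leq \lambda_* + \eps$.

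Next, I exploit the connected limit hypothesis. By definition, the local boundary defining functions of $\Omega_n$ converge in $\m{C}^\al$ to those of $\Omega_*$ on compact sets, and $\Omega_*$ is singled out as a particular connected component. Consequently, for all $n$ sufficiently large, $K_j \Subset \Omega_n$, so monotonicity yields $\lambda_n \leq \lambda(-\Delta+V_n, K_j)$. Moreover, the uniform $\m{C}^\gamma$ bound on $(V_n)$, combined with Arzel\`a--Ascoli and the locally uniform convergence packaged into the connected limit, gives $V_n \to V_*$ uniformly on $K_j$. The classical continuity of the Dirichlet principal eigenvalue on a fixed bounded Lipschitz domain under uniform convergence of a bounded zero-order potential (immediate from the Rayleigh quotient and the compactness of the resolvent of $-\Delta$ on $K_j$) then yields $\lambda(-\Delta+V_n, K_j) \to \lambda(-\Delta+V_*, K_j)$ as $n \to \infty$. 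Assembling:
\begin{equation*}
  \limsup_n \lambda_n \leq \lim_n \lambda(-\Delta+V_n, K_j) = \lambda(-\Delta+V_*, K_j) \leq \lambda_* + \eps,
\end{equation*}
and sending $\eps \to 0$ closes the argument.

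The main obstacle I expect is the low boundary regularity. To invoke the exhaustion formula and the classical variational continuity, I need each $K_j$ to be sufficiently regular (at least Lipschitz with the standard Dirichlet trace theory), and I need to know that the supersolution definition of $\lambda$ actually coincides with the Rayleigh-quotient eigenvalue on such $K_j$. Both are standard, but they require an inner approximation of $\Omega_*$ by smooth subdomains together with an argument that the two notions of principal eigenvalue agree — this is where one must be a little careful in translating between the $\m{C}^\al$ convergence of boundary defining functions and the actual containment $K_j \Subset \Omega_n$ needed for the monotonicity step.
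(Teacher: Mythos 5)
Your argument is correct, and it takes a genuinely different (and somewhat more modular) route than the paper. The paper constructs a positive principal eigenfunction $\varphi_n$ on each $\Omega_n$ by hand (taking limits of eigenfunctions on $\Omega_n \cap B_R$ using Kenig's boundary H\"older estimates for Lipschitz domains), extracts a locally uniform subsequential limit $\varphi_*$ on $\Omega_*$, and uses $\varphi_*$ as a witness in the supremum \eqref{eq:eigenvalue}; along the way it re-derives the exhaustion identity $\lambda_n = \lim_R \lambda_n^R$ from scratch. You instead exhaust the \emph{limit} domain $\Omega_*$ by smooth compact subdomains $K_j$, invoke the exhaustion characterization $\lambda(K_j) \searrow \lambda_*$ (this is the content of \cite[Proposition~2.3(iv)]{BR}, which the paper itself cites elsewhere), use domain monotonicity together with the containment $K_j \Subset \Omega_n$ for large $n$ to bound $\lambda_n$ from above, and then exploit the elementary Rayleigh-quotient Lipschitz continuity of the Dirichlet eigenvalue in the potential on the fixed smooth domain $K_j$. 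What this buys you is avoiding any Schauder or Harnack machinery on Lipschitz boundaries, since all the analysis happens on smooth inner domains; the cost is that the exhaustion characterization you cite is not a triviality (the inequality $\lim_j \lambda(K_j) \leq \lambda_*$ already requires constructing an eigenfunction on $\Omega_*$), so you are implicitly leaning on the same construction the paper carries out explicitly — it is just packaged inside a citation. You were right to flag the step $K_j \Subset \Omega_n$: it does hold because $\bar{K}_j$ is a compact subset of $\Omega_*$ at positive distance from $\partial\Omega_*$, and the locally uniform $\m{C}^\al$ convergence of the boundary charts built into the connected-limit definition rules out any piece of $\partial\Omega_n$ intruding into $\bar{K}_j$ for large $n$.
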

\noindent
We closely follow the proof of Lemma~2.4 in~\cite{BG25}.
\begin{proof}
  Up to a shift, we are free to assume that $0 \in \bigcap_n \Omega_n \cap \Omega_*$.
  We claim that each domain $\Omega_n$ admits a principal eigenfunction $\varphi_n > 0$ normalized by $\varphi_n(0) = 1$.
  Indeed, for each $R > 0,$ let $\Omega_n^R$ denote the connected component of $\Omega_n \cap B_R$ containing $0$.
  Let $\lambda_n^R \coloneqq \lambda(-\Delta + V_n, \Omega_n^R)$ and analogously define $\lambda_n$ and $\lambda_*$.
  By Theorem~2.1 of \cite{BNV}, there exists a principal eigenfunction $\varphi_n^R > 0$ satisfying $(-\Delta + V_n) \varphi_n^R = \lambda_n^R \varphi_n^R$, $\varphi_n^R = 0$ on $\partial\Omega_n^R$, and $\varphi_n^R(0) = 1$.
  Lemma~1.2.4 of \cite{Kenig} implies that $\varphi_n^R \in \m{C}^\beta(\bar{\Omega}{}_n^R)$ for some $\beta \in (0, 1)$ depending on the Lipschitz constant of $\Omega_n$.
  In fact, the proof implies that for any fixed compact $K \subset \Omega_n$ and $R$ sufficiently large, $\norms{\varphi_n^R}_{\m{C}^\beta(K)}$ is uniformly bounded in $R$.
  Sending $R \to \infty$ and using Arzel\`a--Ascoli and diagonalization, we can extract a subsequence of radii $R$ along which $\varphi_n^R$ converges in $\m{C}_{\text{loc}}^{\beta'}(\bar{\Omega}_n)$ for any $\beta' \in (0, \beta)$ to a limit $\varphi_n \in \m{C}^\beta(\bar{\Omega}_n)$.
  Interior and boundary estimates imply that $\varphi_n > 0$ satisfies $(-\Delta + V_n)\varphi_n = (\lim_{R \to \infty } \lambda_n^R) \varphi_n$, $\varphi_n|_{\partial\Omega_n} = 0$, and $\varphi_n(0) = 1$.
  Using $\varphi_n$ as a witness in \eqref{eq:eigenvalue}, we see that $\lambda_n \geq \lim_{R \to \infty } \lambda_n^R$.
  On the other hand, \eqref{eq:eigenvalue} easily shows that the principal eigenvalue is decreasing in the domain (\emph{Cf.} \cite[Proposition~2.3(iii)]{BR}), so $\lambda_n \leq \lambda_n^R$ for all $R$.
  We conclude that $\lambda_n = \lim_{R \to \infty } \lambda_n^R$, so indeed $\lambda_n$ has a corresponding positive normalized eigenfunction.
  
  We now perform the same procedure in the limit $n \to \infty$.
  Using the locally uniform $\m{C}^\beta$ estimates from above, we can extract a subsequential limit $\varphi_* \colon \Omega_* \to \R_+$ of $\varphi_n$ as $n \to \infty$.
  Interior estimates again imply that $\varphi_*$ satisfies $(-\Delta + V_*) \varphi_* = (\lim_{n \to \infty} \lambda_n) \varphi_*$.
  Using $\varphi_*$ as a witness in \eqref{eq:eigenvalue}, we see that $\lambda_* \geq \lim_{n \to \infty} \lambda_n$, as desired.
\end{proof}
We can now identify the limiting behavior of solutions far from the boundary.
\begin{lemma}
  \label{lem:limit}
  Let $\s{U}$ denote the set of positive bounded solutions of \eqref{eq:main}.
  Then if $f$ is bistable~\ref{hyp:bistable},
  \begin{equation}
    \label{eq:uniform}
    \lim_{y' \to \infty} \inf_{\substack{u \in \s{U}\\x' \in \R^{d-1}}} u\big(x', y' + \phi(x')\big) = 1.
  \end{equation}
  If $f$ is field-type~\ref{hyp:field}, then $\s{U} = \emptyset$.
\end{lemma}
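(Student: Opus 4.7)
My plan is to combine the monotonicity from Proposition~\ref{prop:monotone} with the Liouville theorem for stable solutions of \cite{LWWW} to identify the limit of $u(x', \cdot)$ as $y \to \infty$ for each $u \in \s{U}$, and then upgrade pointwise convergence to the uniform statement by a two-stage shift-and-compactness argument. For any $u \in \s{U}$, Proposition~\ref{prop:monotone} gives $\partial_y u > 0$, so $\ell_u(x') \coloneqq \lim_{y \to \infty} u(x', y)$ exists by monotone convergence. Vertically shifting $u_n(x', y) \coloneqq u(x', y + n)$, one obtains solutions on $\Omega - (0, n)$, a sequence of epigraphs that exhausts $\R^d$ because $\inf \phi > -\infty$. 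Interior Schauder estimates then yield a $\m{C}^2_{\text{loc}}$ limit on $\R^d$ which, by monotonicity, is $y$-independent and equal to $\ell_u$; in particular $-\Delta \ell_u = f(\ell_u)$ on $\R^{d-1}$. Since $\partial_y u$ is a positive eigenfunction witnessing $\lambda(-\Delta - f'(u), \Omega) \geq 0$ via \eqref{eq:eigenvalue}, Lemma~\ref{lem:semicontinuous} transfers this stability to $\ell_u$, and \cite{LWWW} forces $\ell_u$ to be constant and to lie in $\{c \geq 0 : f(c) = 0, \, f'(c) \leq 0\}$.

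For field-type $f$ this stable-root set is $\{0\}$, so $\ell_u \equiv 0$; but monotonicity then gives $u(x', y) \leq \ell_u(x') = 0$, contradicting $u > 0$, so $\s{U} = \emptyset$. For bistable $f$ the stable roots are $\{0, 1\}$, and the bound $\ell_u(x') \geq u(x', y_0) > 0$ selects $\ell_u \equiv 1$, establishing the pointwise limit $u(x', y) \to 1$ for each $u \in \s{U}$ and $x' \in \R^{d-1}$.

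To upgrade to the uniform statement, suppose for contradiction that $c_\infty \coloneqq \lim_{y' \to \infty} \inf_{u \in \s{U}, \, x'} u(x', y' + \phi(x')) < 1$ (the limit exists by $y$-monotonicity of each solution), and select $(u_n, x_n', y_n')$ with $y_n' \to \infty$ and $u_n(x_n', y_n' + \phi(x_n')) \to c_\infty$. Shift horizontally to $\tilde u_n(x', y) \coloneqq u_n(x' + x_n', y + \phi(x_n'))$ on $\tilde\Omega_n \coloneqq \Omega - (x_n', \phi(x_n'))$; the boundaries $\tilde\phi_n$ are uniformly Lipschitz with $\tilde\phi_n(0) = 0$, so Arzelà--Ascoli, boundary Schauder estimates, and elliptic regularity yield subsequential limits $\tilde u_\infty$ on a Lipschitz epigraph $\tilde\Omega_\infty$. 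Monotonicity of $\tilde u_n(0, \cdot)$ in $y$ together with $\tilde u_n(0, y_n') \to c_\infty$ forces $\tilde u_\infty(0, y) \leq c_\infty < 1$ for every $y > 0$.

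The main obstacle is ruling out the degenerate possibility $\tilde u_\infty \equiv 0$, since the pointwise argument applied to $\tilde u_\infty$ is then vacuous. To handle this, I fix $\alpha \in (c_\infty, 1) \setminus \{\theta\}$, so that $f(\alpha) \neq 0$, and define the transition heights $Y_n^\alpha \coloneqq \inf\{y > 0 : \tilde u_n(0, y) \geq \alpha\}$; each $Y_n^\alpha$ is finite by the pointwise statement applied to $u_n$, and uniform equicontinuity of the $\tilde u_n$ near the boundary keeps $Y_n^\alpha$ bounded below by a fixed $\delta > 0$. If $(Y_n^\alpha)$ has a bounded subsequence, then $\tilde u_n(0, Y_n^\alpha) = \alpha$ passes to $\tilde u_\infty(0, Y_\infty^\alpha) = \alpha > c_\infty$, contradicting the bound on $\tilde u_\infty(0, \cdot)$. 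Otherwise $Y_n^\alpha \to \infty$, and a second vertical shift $\bar u_n(x', y) \coloneqq \tilde u_n(x', y + Y_n^\alpha)$ solves the equation on $\tilde\Omega_n - (0, Y_n^\alpha)$, whose boundary heights tend to $-\infty$ uniformly on compact sets, making the connected limit equal to $\R^d$. The $\m{C}^2_{\text{loc}}$ limit $\bar u_\infty$ is a bounded nonnegative solution on $\R^d$ with $\bar u_\infty(0, 0) = \alpha$; Lemma~\ref{lem:semicontinuous} preserves stability, and \cite{LWWW} forces $\bar u_\infty \equiv \alpha$, contradicting $f(\alpha) \neq 0$. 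Hence $c_\infty = 1$, as required.
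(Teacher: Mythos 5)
Your proof of the pointwise limit is essentially the paper's: monotonicity from Proposition~\ref{prop:monotone}, identification of the far-field limit as a weakly stable entire solution via Lemma~\ref{lem:semicontinuous}, and the Liouville theorem of~\cite{LWWW} to force it to be a stable constant root (with positivity selecting $1$ in the bistable case and yielding nonexistence in the field case). Where you diverge is in upgrading the pointwise convergence to the uniform statement~\eqref{eq:uniform}. The paper does this constructively: following~\cite[Proposition~2.2]{BG22}, it builds a compactly supported subsolution $w$ of $-\Delta v = f(v)$ with $w(0) = 1 - \eps$ and $\supp w = B_R$, then slides translates of $w$ down from large height; the sliding maximum principle keeps $w(\cdot - (x_0', h)) \leq u$ as long as $B_R(x_0', h) \subset \Omega$, which yields $u(x_0', h) \geq 1 - \eps$ for $h - \phi(x_0') \geq R\sqrt{1 + (\Lip\phi)^2}$, a bound that is simultaneously uniform in $x_0'$ and in $u \in \s{U}$, with an explicit height $R(\eps)$. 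You instead argue by contradiction via a two-stage shift-and-compactness scheme, using the transition heights $Y_n^\alpha$ and a second application of Lemma~\ref{lem:semicontinuous} and~\cite{LWWW} on the re-centered limit to reach $f(\alpha) = 0$, which is impossible. Your argument is sound (in fact, under the contradiction hypothesis one can check that $Y_n^\alpha \to \infty$ always, so your bounded-subsequence case is vacuous), and it has the merit of avoiding the explicit subsolution construction. The paper's route is shorter and, being constructive, yields a quantitative rate for the uniform convergence, which your compactness argument does not.
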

\noindent
That is, when $f$ is bistable $u \to 1$ uniformly as the vertical distance to the boundary tends to infinity.
When $f$ is field-type, there are no positive bounded solutions.
This completes the proof of Theorem~\ref{thm:field-unique} for field-type nonlinearities.
\begin{proof}
  Let $u$ be a positive bounded solution of \eqref{eq:main}.
  By Proposition~\ref{prop:monotone}, $\partial_y u > 0$.
  It follows that $u(x', y)$ converges to a positive bounded limit $u_\infty(x')$ as $y \to \infty$.
  Interior Schauder estimates imply that this convergence is locally uniform in $x'$ and $u_\infty$ solves $-\Delta u_\infty = f(u_\infty)$ in $\R^{d-1}$.
  Moreover, differentiating \eqref{eq:main} in $y$, we see that $\partial_y u > 0$ solves its linearization about $u$.
  By \eqref{eq:eigenvalue}, $\lambda(-\Delta-f'(u), \Omega) \geq 0$.

  Since $f'$ is continuous, $\big(\R^d, f'(u_\infty)\big)$ is the connected limit of $\big(\Omega, f'(u)\big)$ as $y \to \infty$ in the sense described before Lemma~\ref{lem:semicontinuous}.
  By Lemma~\ref{lem:semicontinuous} we have
  \begin{equation*}
    \lambda(-\Delta - f'(u_\infty), \R^d) \geq \lambda(-\Delta-f'(u), \Omega) \geq 0.
  \end{equation*}
  Furthermore, Lemma~3.2 of~\cite{BG23b} allows us to drop the free $y$-dimension:
  \begin{equation*}
    \lambda(-\Delta - f'(u_\infty), \R^{d-1})  = \lambda(-\Delta - f'(u_\infty), \R^d) \geq 0.
  \end{equation*}
  That is, $u_\infty$ is a (weakly) stable solution of $-\Delta v = f(v)$ in $\R^{d-1}$.
  By Theorem~1.4 of~\cite{LWWW}, $u_\infty$ is a positive constant, which must hence be a weakly stable root of $f$.
  If $f$ is bistable, then $u_\infty = 1$.
  On the other hand, field-type nonlinearities have no positive weakly stable root, so in fact $u$ does not exist.

  Assuming $f$ is bistable, we now show that the convergence $u \to 1$ is uniform in the sense of \eqref{eq:uniform}.
  Fix $\eps \in (0, 1)$.
  Following \cite[Proposition~2.2]{BG22}, we can construct a subsolution $w \leq 1 - \eps$ of $-\Delta v = f(v)$ such that $w(0) = 1 - \eps$ and $\supp w = B_R$ for some $R(\eps) > 0$.
  For any $x_0' \in \R^{d-1}$, the locally uniform convergence $u(x', y) \to 1$ implies that $v(x' - x_0', y - h) \leq u$ for $h \gg 1$.
  Continuously reducing $h$, the strong maximum principle ensures that this continues to hold so long as $B_R(x_0', h) \subset \Omega$.
  A simple geometric argument shows that this holds whenever $h - \phi(x_0') \geq R \sqrt{1 + (\Lip \phi)^2}$.
  In particular, $u\big(x', y' + \phi(x')\big) \geq 1 - \eps$ whenever $y' \geq R(\eps) \sqrt{1 + (\Lip \phi)^2}$.
  This proves \eqref{eq:uniform}.
\end{proof}
The remainder of our argument for bistable nonlinearities hews close to~\cite{BCN97b}, which established uniqueness for monostable nonlinearities in uniformly Lipschitz epigraphs.
We frame a maximum principle deep in the interior of $\Omega$.
Given $h \geq 0$ and $v \colon \bar\Omega \to \R$, define the shifted domain $\Omega^h \coloneqq \Omega + h e_y$ and function $v^h \coloneqq v(\anon - h e_y)$.
We extend the latter by $0$ to $\R^d$.
\begin{proposition}
  \label{prop:MP-interior}
  Let $f$ be bistable~\ref{hyp:bistable}.
  There exists $H > 0$ such that the following holds for all $h \geq 0$.
  If $u$ and $v$ are positive bounded solutions of \eqref{eq:main} and $u \geq v^h$ on $\partial\Omega^H$, then $u \geq v^h$ in $\Omega^H$.
\end{proposition}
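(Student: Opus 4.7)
The plan is to choose $H$ so large that in $\Omega^H$, every solution $u$ of \eqref{eq:main} is already close to the stable root $1$; on the hypothetical set where $u < v^h$, the reflection $v^h$ must then also be close to $1$, which forces the linearized potential to be strictly negative and lets a standard maximum principle close the argument.

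Concretely, since $f'$ is continuous and $f'(1) < 0$, I first fix $\eps \in (0, 1)$ so small that $f' \leq f'(1)/2$ on $[1-\eps, 1]$. Lemma~\ref{lem:limit} then furnishes an $H > 0$ for which every positive bounded solution of \eqref{eq:main} satisfies $u \geq 1 - \eps$ on $\Omega^H$. I claim this $H$ works for every $h \geq 0$. Given $h$, $u$, and $v$ as in the statement, set $w \coloneqq u - v^h$. Observe that $v^h$ is continuous across $\partial \Omega^h$ (both sides vanish, since $v|_{\partial\Omega} = 0$), so $w$ is continuous on $\R^d$. Assume for contradiction that $Q \coloneqq \{w < 0\} \cap \Omega^H$ is nonempty. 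Since $v^h \equiv 0$ outside $\Omega^h$, we have $Q \subset \Omega^h$, so $u$ and $v^h$ both solve \eqref{eq:main} in $Q$ and
\[
  -\Delta w = q w \quad \text{in } Q, \qquad q \coloneqq \frac{f(v^h) - f(u)}{v^h - u}.
\]

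The crucial observation is that in $Q$ we have $v^h > u \geq 1 - \eps$. Moreover, $v < 1$ throughout $\Omega$: by Proposition~\ref{prop:monotone} the function $v$ is strictly increasing in $y$, and by Lemma~\ref{lem:limit} its pointwise limit as $y \to \infty$ equals $1$. Hence $v^h < 1$ in $\Omega^h$, so the mean value theorem gives $q = f'(\xi)$ for some $\xi \in [1-\eps, 1]$, whence $q \leq f'(1)/2 < 0$ throughout $Q$. Consequently $\lambda(-\Delta - q, Q) \geq \abs{f'(1)}/2 > 0$. Since $w$ is bounded, vanishes on $\partial Q \cap \Omega^H$, and is $\geq 0$ on $\partial Q \cap \partial \Omega^H$ (by hypothesis), the maximum principle associated with a positive generalized principal eigenvalue (as in \cite{BNV,BR}) forces $w \geq 0$ in $Q$, contradicting $Q \subset \{w < 0\}$.

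The main obstacle I anticipate is that a single $H$ must serve for every $h \geq 0$. Naively one might try to force $v^h$ itself to be close to $1$ on $\Omega^H$, which would demand $H \gtrsim h + H_0$ and therefore fail for large $h$. My plan sidesteps this by never directly controlling $v^h$: instead, on the hypothetical bad set $Q$, the very inequality $v^h > u$ combined with $u \geq 1-\eps$ automatically pins $v^h$ into the narrow window $(1-\eps, 1)$, where $f'$ is uniformly negative. This asymmetric use of the bound on $u$ is what permits a uniform-in-$h$ choice of $H$, and it is exactly the flavor of argument from the sliding method of \cite{BCN97b} that the authors invoke.
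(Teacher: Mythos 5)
Your proof is correct and follows essentially the same route as the paper: choose $H$ so that $u \geq 1-\eps$ throughout $\Omega^H$, observe that on the hypothetical set $\{v^h > u\}\cap\Omega^H$ both functions lie in $[1-\eps,1]$ where $f' \leq f'(1)/2 < 0$, and conclude via positivity of the generalized principal eigenvalue. The one small refinement over the paper's write-up is your explicit verification that $v^h < 1$ (via $\partial_y v>0$ and Lemma~\ref{lem:limit}); the paper uses this implicitly when it applies the mean value theorem over $[s,1]$, so your added step tightens a detail rather than changing the argument.
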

\begin{proof}
  Because $f'(1) < 1$ and $f'$ is continuous, there exists $s \in (0, 1)$ such that $\sup_{[s, 1]} f' \leq f'(1)/2 < 0$.
  Let $y' \coloneqq y - \phi(x')$.
  By Lemma~\ref{lem:limit}, there exists $H > 0$ such that $u \geq s$ where $y' \geq H$ for every positive bounded solution $u$ of \eqref{eq:main}.

  Given $h > 0$, suppose $u,v$ are two such solutions satisfying $u \geq v^h$ on $\partial \Omega^H$.
  Define $P \coloneqq \{v^h > u\} \cap \Omega^H$.
  There, the difference $w \coloneqq v^H - u$ satisfies the linear equation $-\Delta w = q w$ for $q \coloneqq \frac{f(v^H) - f(u)}{v^H - u}$.
  By the mean value theorem and the definitions of $H$ and $s$, $q \leq f'(1)/2 < 0$.
  It follows that
  \begin{equation*}
    \lambda(-\Delta - q, P) \geq \abs{f'(1)}/2 > 0.
  \end{equation*}
  So $-\Delta - q$ satisfies the maximum principle on $P$.
  Now, $w|_{\partial P} = 0$ because $v^h \leq u$ on $\partial \Omega^H$.
  It follows that $w = 0$ in $P$, so in fact $P$ is empty.
  That is, $v^h \leq u$ in $\Omega^H$, as desired.
\end{proof}
We are finally in a position to prove uniqueness in uniformly Lipschitz epigraphs that are bounded from below.
\begin{proof}[Proof of Theorem~\textup{\ref{thm:unique}}]
  Following the proof of~\cite[Proposition~2.2]{BG22}, we construct a subsolution $0 \lneqq \ubar{v} \leq 1$ of $-\Delta v = f(v)$ supported on a large ball.
  Since $\Omega$ contains arbitrarily large balls, we can place the support of $\ubar{v}$ within $\Omega$.
  Let $w$ solve the parabolic problem
  \begin{equation*}
    \begin{cases}
      \partial_t w = \Delta w + f(w) & \text{in }\Omega,\\
      w = 0 & \text{on } \partial \Omega,\\
      w(t = 0, \anon) = \ubar{v} & \text{in }\Omega.
    \end{cases}
  \end{equation*}
  Because $\ubar{v}$ is a subsolution, $w$ is increasing for all time.
  Moreover, the comparison principle implies that $w \leq 1$.
  It follows that $w$ has a long-time limit $w(\infty, \anon)$ that is positive, bounded, and satisfies \eqref{eq:main}.
  That is, we have existence.
  
  For uniqueness, let $u$ and $v$ be positive bounded solutions of \eqref{eq:main} and define
  \begin{equation*}
    \hs \coloneqq \sup \{h > 0 : v^h \not \leq u\}.
  \end{equation*}
  Recalling that $v^h = 0$ in $(\Omega^h)^\cc$, Proposition~\ref{prop:MP-interior} implies that $v^h \leq u$ for all $h \geq H$, so $\hs \leq H$.
  Suppose for the sake of contradiction that $\hs > 0$.
  By continuity, $v^\hs \leq u$.

  Fix $h_0 \in (0, \hs)$ and let $y' \coloneqq y - \phi(x')$.
  We claim that $\inf_{\{h_0 \leq y' \leq H\}} (u - v^\hs) > 0$, so toward a further contradiction, suppose instead that there exists a sequence $x_n = (x_n', y_n)$ with $y_n' \in [h_0, H]$ such that $(u - v^\hs)(x_n) \to 0$ as $n \to \infty$.
  Define
  \begin{equation*}
    \begin{gathered}
      u_n(x', y) \coloneqq u\big(x + x_n', y + \phi(x_n')\big), \enspace v_n(x', y) \coloneqq v\big(x + x_n', y + \phi(x_n')\big),\\
      \text{and} \quad \phi_n(x') \coloneqq \phi(x' + x_n') - \phi(x_n').
    \end{gathered}
  \end{equation*}
  Interior Schauder estimates yield a subsequential limit $(y_\infty', u_\infty, v_\infty, \phi_\infty)$ of the sequence $(y_n', u_n, v_n, \phi_n)$.
  We accordingly define $\Omega_\infty$, $\Omega_\infty^\hs,$ and $v_\infty^\hs \leq u_\infty$.
  We have $y_\infty' \in [h_0, H]$, so $(0, y_\infty') \in \Omega_\infty$.
  Also, Schauder estimates imply that $u_\infty$ and $v_\infty$ satisfy the analogue of \eqref{eq:main} in $\Omega_\infty$.
  Lemma~\ref{lem:limit} ensures that $u_\infty$ is nonzero, so by the strong maximum principle $u_\infty > 0$ in $\Omega_\infty$.  
  In particular, $u_\infty > 0 = v_\infty^\hs$ on $\partial \Omega_\infty^\hs$, so the strong maximum principle yields $u_\infty > v_\infty^\hs$ in $\Omega_\infty$.
  However, the definition of the sequence $(x_n)_n$ implies that $u_\infty(0, y_\infty') = v_\infty^\hs(0, y_\infty')$.
  This contradicts $(0, y_\infty') \in \Omega_\infty$, so indeed $\inf_{\{h_0 \leq y' \leq H\}} (u - v^\hs) > 0$.
  
  By the uniform continuity of $u$ and $v$, there exists $\eps > 0$ such that $u \geq v^h$ in $\{0 \leq y' \leq H\}$ for all $h \in (\hs - \eps, \hs]$.
  Because $u \geq v^h$ on $\{y' = H\} = \partial\Omega^H$, Proposition~\ref{prop:MP-interior} implies that $u \geq v^h$ everywhere.
  This contradicts the definition of $\hs$, so in fact $\hs = 0$.
  It follows that $v \leq u$, and by symmetry $u = v.$
\end{proof}
We can adapt these ideas to rule out certain solutions in general Lipschitz epigraphs, not necessarily bounded from below.
We recall that a function $\phi$ is \emph{coercive} if $\phi(x') \to \infty$ as $\abs{x'} \to \infty$.
\begin{theorem}
  \label{thm:no-ground}
  Let $\Omega$ be a uniformly Lipschitz epigraph and $f$ be bistable \ref{hyp:bistable} or field-type \ref{hyp:field}.
  Suppose \eqref{eq:main} admits a positive bounded solution $u$ that vanishes uniformly in $x'$ as $y \to -\infty$ and pointwise in $y$ as $\abs{x'} \to \infty$.
  (Both these limits may be empty.)
  Then $f$ is bistable, $\phi$ is coercive, and $u$ is unique.
\end{theorem}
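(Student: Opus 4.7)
I would adapt the three ingredients of Section~\ref{sec:unique} (monotonicity, asymptotic convergence, sliding) to general uniformly Lipschitz epigraphs by using the two vanishing hypotheses to substitute for $\inf \phi > -\infty$.

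\textbf{Step 1: Monotonicity.} I would reprove $\partial_y u > 0$ along the lines of Proposition~\ref{prop:monotone}, with two modifications. First, I would initiate the moving planes at $\mu \ll 0$ rather than at $\mu \approx 0$: the uniform vanishing as $y \to -\infty$ ensures $u \leq s$ in $\Sigma^\mu$ for $\mu$ sufficiently negative, where $s$ is chosen so that $f' \leq f'(0)/2 < 0$ on $[0, s]$. On any hypothetical contradiction set $Q = \{w^\mu < 0\} \subset \Sigma^\mu$ one then has $v^\mu \leq u \leq s$, so the potential $q^\mu$ is bounded above by $f'(0)/2 < 0$ and the maximum principle gives $Q = \emptyset$. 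This replaces the slab estimate $\lambda(-\Delta, \R^{d-1}\times(0,\mu)) = \pi^2/\mu^2$. Second, in the stable-compact argument at $\mus$, I would show that any sequence $x_n = (x_n', y_n) \in \Sigma_-$ with $w^{\mus}(x_n) \to 0$ must have $\phi(x_n')$ bounded, so that horizontal recentering produces a well-defined limit epigraph. This uses the pointwise vanishing hypothesis: if $\phi(x_n') \to -\infty$, then $|x_n'| \to \infty$ by continuity of $\phi$, and uniform continuity of $u$ combined with the boundedness of $y_n$ forces $u(x_n) \to 0$, contradicting $u(x_n) \geq s$ from the definition of $\Sigma_-$.

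\textbf{Step 2: Asymptotic dichotomy.} With $\partial_y u > 0$ established, the limit $u_\infty(x') = \lim_{y\to\infty} u(x', y)$ exists, solves the equation on $\R^{d-1}$, and is weakly stable via Lemma~\ref{lem:semicontinuous}. Theorem~1.4 of~\cite{LWWW} identifies $u_\infty$ as a constant weakly stable root of $f$. A field-type nonlinearity admits no positive such root, so monotonicity forces $u \leq u_\infty \equiv 0$, contradicting positivity; hence $f$ must be bistable and $u_\infty \equiv 1$. Repeating the sliding-subsolution construction from Lemma~\ref{lem:limit}---which is purely geometric and only depends on $\op{Lip}\phi$---yields the uniform bound $u(x', y) \geq 1 - \eps$ whenever $y - \phi(x') \geq R(\eps)\sqrt{1 + (\op{Lip}\phi)^2}$.

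\textbf{Step 3: Coerciveness.} If $\phi$ were not coercive, some sequence $x_n' \to \infty$ would satisfy $\phi(x_n') \leq C$. Fixing $y_0 = C + R(\eps)\sqrt{1 + (\op{Lip}\phi)^2}$, the uniform bound from Step~2 gives $u(x_n', y_0) \geq 1 - \eps$, contradicting $u(x_n', y_0) \to 0$ from the pointwise vanishing hypothesis.

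\textbf{Step 4: Uniqueness.} I would follow the sliding argument of Theorem~\ref{thm:unique} essentially verbatim, defining $\hs = \sup\{h > 0 : v^h \not\leq u\}$; Proposition~\ref{prop:MP-interior} applies thanks to Step~2's uniform lower bound (giving $\hs \leq H$), while the uniform vanishing of $v$ as $y \to -\infty$ ensures $\hs$ is well-defined. The contradiction step uses the same subsequential-limit argument, but with vertical as well as horizontal recentering (subtracting $\phi(x_n')$ from the $y$-coordinate so that $\tilde\phi_n(0) = 0$) to accommodate unbounded $\phi(x_n')$. The Step~2 uniform bound ensures the limit $\tilde u_\infty$ is nontrivial, and two applications of the strong maximum principle yield $\tilde u_\infty > \tilde v_\infty^\hs$ in $\tilde\Omega_\infty$, contradicting $(u - v^\hs)(x_n) \to 0$. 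Symmetry then gives $u = v$.

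The main obstacle is Step~1: adapting Proposition~\ref{prop:monotone} to epigraphs without a lower bound requires a genuinely new starting point for the moving planes as well as a careful use of the pointwise vanishing hypothesis to restore the compactness of $\Sigma_-$ required for the subsequential-limit argument. Once $\partial_y u > 0$ is in hand, Steps~2--4 are cosmetic adaptations of existing proofs.
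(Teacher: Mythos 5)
Your proof is correct and follows essentially the same route as the paper: start the moving planes from $\mu \ll 0$ using the uniform vanishing as $y \to -\infty$, use the pointwise vanishing as $\abs{x'} \to \infty$ to recover compactness at $\mus$, deduce $\partial_y u > 0$, then identify the limit at $y \to \infty$ via \cite{LWWW}, and conclude coercivity by contradiction. There are two places where you do more work than needed. First, in Step~1 the paper does not rerun the subsequential-limit argument: once $\Sigma_- \subset \R^{d-1} \times [\mu_0, \mus]$ and the pointwise-vanishing hypothesis (upgraded to local uniformity in $y$ by uniform continuity) confine $\abs{x'}$, the set $\Sigma_-$ is actually compact, so $\inf_{\Sigma_-} w^{\mus} > 0$ follows directly from the strong maximum principle on $\Sigma^{\mus}$ with no horizontal recentering at all. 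Second, and more importantly, Step~4 is unnecessary: once you establish in Step~3 that $\phi$ is coercive, $\phi$ (continuous and tending to $+\infty$) attains its infimum and is therefore bounded from below, so $\Omega$ already satisfies the hypothesis of Theorem~\ref{thm:unique} and uniqueness is immediate. The paper flags this explicitly, which is what makes Theorem~\ref{thm:no-ground} ``negative in nature'': the real content is the implication that $\phi$ must be coercive, after which the earlier uniqueness theorems take over verbatim. Re-deriving the sliding argument with vertical recentering is not wrong, but it is redundant.
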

\noindent
  If $\phi$ is coercive, then the vanishing conditions are vacuous and $\Omega$ is bounded from below, so Theorems~\ref{thm:unique} and \ref{thm:field-unique} apply.
  Thus Theorem~\ref{thm:no-ground} is essentially negative in nature: it forbids certain solutions in noncoercive epigraphs.
  In particular, it rules out ground states satisfying $u \to 0$ as $\abs{x} \to \infty$.
\begin{proof}
  Let $u$ be a positive bounded solution of \eqref{eq:main} that vanishes in the indicated manner.
  We adapt the proof of Proposition~\ref{prop:monotone} to show that $\partial_y u > 0$.
  Lemma~\ref{lem:limit} only uses the hypothesis that $\Omega$ is bounded from below to establish Proposition~\ref{prop:monotone}.
  It thus extends to this setting and states that $f$ is bistable and $u \to 1$ uniformly as $y' \coloneqq y - \phi(x') \to \infty$.
  This contradicts $u(x', y) \to 0$ as $\abs{x'} \to \infty$ unless $\phi$ is coercive, and then Theorem~\ref{thm:unique} yields uniqueness.
  So it suffices to show $\partial_y u > 0$.

  We recall notation from the proof of Proposition~\ref{prop:monotone}.
  Given $\mu \in \R$, define $\Sigma^\mu \coloneqq \{y < \mu\} \cap \Omega$, $v^\mu(x', y) \coloneqq u(x', 2 \mu - y)$, and $w^\mu \coloneqq v^\mu - u$, which satisfies \eqref{eq:diff} with $q^\mu \coloneqq \frac{f(v^\mu) - f(u)}{v^\mu - u}\tbf{1}_{v^\mu \neq u}$.
  Recall that there exists $s > 0$ such that $\sup_{[0, s]} f' \leq f'(0)/2 < 0$.
  Because $u \to 0$ uniformly as $y \to -\infty$, there exists $\mu_0 \in \R$ such that $u < s$ in $\Sigma^{\mu_0}$.

  Let $\mu \leq \mu_0$ and define $Q \coloneqq \{w^\mu < 0\}$.
  Then $v^\mu < u < s$ in $Q$, so $q^\mu \leq f'(0)/2$ by the mean value theorem and the definition of $s$.
  Hence $\lambda(-\Delta - q^\mu, Q) \geq f'(0)/2 > 0$, so \eqref{eq:diff} satisfies a maximum principle on $Q$.
  Because $w^\mu \geq 0$ on $\partial \Sigma^\mu$, we must have $w^\mu|_{\partial Q} = 0$.
  Hence $w^\mu = 0$ in $Q$.
  That is, $Q$ is in fact empty and $w^\mu \geq 0$ in $\Sigma^\mu$.

  Recall $\mus$ and $\Sigma_-$ from \eqref{eq:mu-crit} and \eqref{eq:compact}, so $\mus \geq \mu_0 > -\infty.$
  Suppose for the sake of contradiction that $\mus < \infty$.
  We claim that $\inf_{\Sigma_-} w^\mus > 0$.  
  To see this, we note that $\Sigma_- \subset \{u \geq s\}$.
  The definition of $\mu_0$ implies that $\Sigma_- \subset \R^{d-1} \times [\mu_0, \mus]$.
  Now $u(x', y) \to 0$ pointwise in $y$ as $\abs{x'} \to \infty$.
  Since $u$ is uniformly continuous, this limit in fact holds locally uniformly in $y$.
  In particular, $u \to 0$ as $\abs{x'} \to \infty$ uniformly in $y \in [\mu_0, \mus]$.
  It follows that $\Sigma_- \subset \Sigma^\mus$ is compact.
  The strong maximum principle yields $w^\mus > 0$ in $\Sigma^\mus$, so indeed $\inf_{\Sigma_-} w^\mus > 0$.
  
  From this point, the proof of Proposition~\ref{prop:monotone} proceeds unhindered, and we conclude that $\partial_y u > 0$.
  As observed at the beginning, this completes the proof.
\end{proof}

\section{Multiplicity}
\label{sec:multiple}

In this section, we instead consider domains $\Omega$ with aperture greater than $\pi$, meaning $\Omega$ contains a circular nonconvex cone.
We construct a large family of solutions of \eqref{eq:main} and thereby prove Theorems~\ref{thm:multiple} and \ref{thm:field-multiple}.
Our approach is inspired by the work of Malchiodi~\cite{Malchiodi}, who constructed solutions with ``Y-shape'' in the whole space $\R^d$.
We show that the presence of boundary can effectively replace one leg of the Y.
This boundary effect has an opposing sign, so an aperture greater than $\pi$ is required to accommodate the remaining two legs.

We work toward the following:
\begin{theorem}
  \label{thm:multiple-unified}
  Suppose $f$ is bistable \ref{hyp:bistable} or field-type \ref{hyp:field} and admits a nondegenerate ground state in the sense of \ref{hyp:ground}.
  Then if $\Omega$ is locally Lipschitz and has aperture greater than $\pi$, \eqref{eq:main} admits a $(d + 1)$-parameter family of positive bounded solutions.
\end{theorem}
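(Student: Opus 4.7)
The plan is to implement a Lyapunov--Schmidt reduction around widely separated copies of the nondegenerate ground state $U$ supplied by \ref{hyp:ground}. Fix inside $\Omega$ a round cone $\mathcal{C}$ of aperture strictly greater than $\pi$. For a large spacing parameter $\rho$ and a configuration $\boldsymbol\xi = (\xi_1, \dots, \xi_N) \subset \mathcal{C}$ whose points lie at mutual distances and distances to $\partial\Omega$ of order $\rho$, I would first build an approximate solution
$$W_{\boldsymbol\xi} := \sum_{k=1}^N U(\,\cdot\, - \xi_k) + \psi_{\boldsymbol\xi},$$
where the corrector $\psi_{\boldsymbol\xi}$ enforces $W_{\boldsymbol\xi} = 0$ on $\partial\Omega$. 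Exponential decay of $U$ makes $\psi_{\boldsymbol\xi}$, along with the residual $E_{\boldsymbol\xi} := \Delta W_{\boldsymbol\xi} + f(W_{\boldsymbol\xi})$, of size $e^{-c\rho}$. In the bistable case $W_{\boldsymbol\xi}$ approximates $0$ rather than $1$, consistent with ground states decaying to $0$ under \ref{hyp:ground}.

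Writing $u = W_{\boldsymbol\xi} + \phi$, equation \eqref{eq:main} becomes $L_{\boldsymbol\xi}\phi = E_{\boldsymbol\xi} + N_{\boldsymbol\xi}(\phi)$, where $L_{\boldsymbol\xi} := -\Delta - f'(W_{\boldsymbol\xi})$ and $N_{\boldsymbol\xi}$ is a quadratic remainder. By \ref{hyp:ground}, the bounded kernel of the single-bump linearization $L := -\Delta - f'(U)$ on $\R^d$ is exactly $\mathrm{span}\{\partial_j U\}_{j=1}^d$, so $L_{\boldsymbol\xi}$ is invertible modulo the approximate $Nd$-dimensional kernel $\mathcal{K}_{\boldsymbol\xi} := \mathrm{span}\{\partial_j U(\,\cdot\, - \xi_k)\}_{k,j}$. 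In a weighted H\"older framework adapted to the configuration, I would solve
$$L_{\boldsymbol\xi}\phi \;=\; E_{\boldsymbol\xi} + N_{\boldsymbol\xi}(\phi) + \sum_{k,j} c_{k,j}\, \partial_j U(\,\cdot\, - \xi_k), \qquad \phi \perp \mathcal{K}_{\boldsymbol\xi},$$
by contraction mapping, obtaining a unique small $\phi_{\boldsymbol\xi}$ and Lagrange multipliers $c_{k,j}(\boldsymbol\xi)$.

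The resulting $u := W_{\boldsymbol\xi} + \phi_{\boldsymbol\xi}$ solves \eqref{eq:main} exactly iff all $c_{k,j}(\boldsymbol\xi)$ vanish. Projecting the equation against $\partial_j U(\,\cdot\, - \xi_k)$ and using the known exponential tail of $U$, this condition reduces at leading order to a force balance at each site $\xi_k$: an \emph{attractive} pairwise pull from each other $\xi_l$ of order $e^{-|\xi_k - \xi_l|}$ pointing from $\xi_k$ toward $\xi_l$, plus a \emph{repulsive} boundary push of order $e^{-2\,\dist(\xi_k, \partial\Omega)}$ pointing into $\Omega$ (originating from $\psi_{\boldsymbol\xi}$, or equivalently from a virtual image ground state reflected across the nearest boundary point). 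Because $\mathcal{C}$ opens wider than $\pi$, around each sufficiently interior base point I can place neighbors along directions whose combined pull cancels the boundary push --- a configuration that is geometrically forbidden for apertures at most $\pi$. An implicit-function or topological degree argument applied to the leading-order $Nd$-variable system then produces a $(d+1)$-dimensional manifold of equilibrium configurations (for instance, $d$ dimensions of overall translation within $\mathcal{C}$ together with a $1$-dimensional spacing parameter), and exponential localization of the bumps ensures that distinct configurations yield distinct solutions of \eqref{eq:main}.

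The main obstacle is the uniform linear estimate underlying Step~2: one must show $L_{\boldsymbol\xi}$ is invertible on $\mathcal{K}_{\boldsymbol\xi}^\perp$ with bounds independent of $\rho$ and of the precise positions $\boldsymbol\xi$. I would handle this by blowup and contradiction: a hypothetical sequence of almost-eigenfunctions would either concentrate at some $\xi_k$ (contradicting the nondegeneracy of $U$ in \ref{hyp:ground}), escape to spatial infinity (giving a bounded kernel element of $-\Delta - f'(0)$ on $\R^d$ or a half-space, which vanishes since $f'(0) < 0$), or concentrate on $\partial\Omega$ (ruled out by boundary Schauder bounds up to the local Lipschitz boundary). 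Carrying the boundary consistently through these limits --- rather than working purely on $\R^d$ as in~\cite{Malchiodi} --- is the principal additional technical burden; a secondary challenge is verifying that the attraction/repulsion computation really dominates the reduced equation, which I would handle by refining the weighted estimates until errors from $\phi_{\boldsymbol\xi}$ and $\psi_{\boldsymbol\xi}$ are quantitatively smaller than the leading interaction terms.
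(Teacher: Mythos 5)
Your general strategy (Lyapunov--Schmidt around widely separated copies of the ground state, interpreting the reduced equations as a force balance with inter-spike attraction and boundary repulsion) is the same one the paper uses, following Malchiodi's construction. However, there are a few places where the proposal as written would not go through.

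\textbf{Finite configurations cannot be in equilibrium.} You take $N$ spikes at mutual distances and boundary distances of order $\rho$. Whatever $N$ is, consider a spike on the convex hull of the configuration farthest from the boundary: every inter-spike force on it pulls it inward, while the boundary repulsion (of size $\asymp e^{-2\dist(\cdot,\partial\Omega)}$) is at that depth exponentially weaker than the attraction $\asymp e^{-|\xi_k-\xi_l|}$. There is no way to cancel the net attraction on the extremal bumps, so the reduced system has no solution. The paper resolves this by using a \emph{bi-infinite} chain of spikes arranged along two rays $\theta_\pm$ inside the cone; the tails converge to periodic Delaunay solutions (Proposition~\ref{prop:Delaunay}), so the inter-spike attractions on each outer spike nearly cancel between its two neighbors, and the only nontrivial force balance is at the central spike $z_0$. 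This requires the preliminary input that \eqref{eq:Delaunay} has a nondegenerate solution $u_L$ for $L\gg 1$, which you do not invoke.

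\textbf{The repulsion and attraction do not have the same decay profile.} You posit boundary repulsion $\asymp e^{-2\dist(\xi_k,\partial\Omega)}$ and inter-spike attraction $\asymp e^{-|\xi_k-\xi_l|}$, as though a naive image charge at the reflected point governs the repulsion. In fact (Lemma~\ref{lem:center} in the paper), the boundary repulsion on a spike at distance $L_0$ is $\asymp L_0^{-(d-1)}e^{-2L_0}$, whereas the attraction from a neighbor at distance $L$ is $\asymp L^{-(d-1)/2}e^{-L}$; at $L = 2L_0$ these differ by a polynomial factor $L_0^{(d-1)/2}$. Consequently the spacing must be taken $L_\pm = 2L_0 + \tfrac{d-1}{2}\log L_0 + \m{O}(1)$, not simply $2L_0$. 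If you ignore the polynomial prefactor, the leading-order reduced system has no solutions.

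\textbf{The parameter count.} You justify the $(d+1)$-dimensional family via ``$d$ dimensions of overall translation within $\mathcal{C}$ together with a $1$-dimensional spacing parameter.'' Translation is not a symmetry of \eqref{eq:main} once $\Omega$ is fixed, so it cannot contribute free parameters. Indeed, in your finite setup the reduced system has $Nd$ unknowns (the positions $\xi_k$) and $Nd$ equations (the multipliers $c_{k,j}$), so one expects isolated solutions generically. The paper obtains $d+1$ parameters by a genuine dimension count on the reduced central equation: after solving $\eta_k = 0$ for $k\neq 0$ by contraction using the auxiliary shifts $(\m{S},\m{P})$, the remaining equation $\eta_0 = 0$ imposes $d$ conditions on the $2d+1$ geometric parameters $(\theta_-, \theta_+, L_-, L_+, L_0)$, leaving a $(d+1)$-parameter family of equilibria.

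Your blowup strategy for the uniform inverse of the linearization modulo the approximate kernel is sound, and the observation that limits escaping to infinity see $-\Delta + |f'(0)|$ (which is coercive) is the right reason that direction is harmless. That part of your proposal matches the standard argument.
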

From this result, Theorem~\ref{thm:multiple} is immediate and Theorem~\ref{thm:field-multiple} follows from Proposition~\ref{prop:nondegenerate}.

\subsection{Preliminaries}

Throughout, we assume that $f$ admits a nondegenerate ground state $U$ under satisfying~\ref{hyp:ground}.
For convenience, we normalize $f'(0) = -1$, which can be arranged by scaling $u$.
We then write $g(u) \coloneqq f(u) + u$ for the ``nonlinear part'' of $f$, viewed from the origin.

For the domain, we assume $\Omega \subsetneq \R^d$, for nonuniqueness is well-known in the free space.
Because $\partial\Omega$ is locally Lipschitz, there exists $\rho > 0$ such that some ball of radius $\rho$ is disjoint from $\Omega$.
We also assume $\Omega$ has aperture greater than $\pi$, so we can choose coordinates $x = (x', y) \in \R^{d-1} \times \R$ such that $\{y > -\al \abs{x'}\} \subset \Omega$ and $B_\rho(0, -\ell) \subset \Omega^\cc$ for some $\al,\ell > 0$.

\subsection{Delaunay solutions}

The solutions we construct resemble a bent chain of ground states spaced at wide but regular intervals.
At infinity, they converge to (rotations of) periodic ``Delaunay solutions'' defined on the fundamental domain $D_L \coloneqq \{\abs{y} < L/2\}$ and satisfying
\begin{equation}
  \label{eq:Delaunay}
  \begin{cases}
    -\Delta u = f(u) & \text{in } D_L,\\
    \partial_y u = 0 & \text{on } \partial D_L.
  \end{cases}
\end{equation}
Any positive solution of \eqref{eq:Delaunay} can be extended to an $L$-periodic solution of \eqref{eq:main} in $\R^d$.
In a minor abuse of notation, we use $u_L$ to refer to both.

When $f(u) = -u + u^p$ with $p \in (1, \frac{d+2}{d-2})$, Dancer used Crandall--Rabinowitz bifurcation theory to show that \eqref{eq:Delaunay} admits a positive bounded solution $u_L$ provided $L$ is sufficiently large~\cite{Dancer}.
The first and third authors~\cite{BW} have characterized $u_L$ as the minimizer of a certain energy when $L \gg 1$.
Here, we make use of estimates of Malchiodi~\cite{Malchiodi}, who used an implicit function theorem to construct $u_L$ converging exponentially quickly to the ground state $U$ as $L \to \infty$.
Although framed for $f(u) = -u + u^p$, the proof of~\cite[Proposition~3.1]{Malchiodi} extends to any nonlinearity with nondegenerate ground state.
We thus obtain:
\begin{proposition}[\cite{Malchiodi}]
  \label{prop:Delaunay}
  Let $f$ be bistable \ref{hyp:bistable} or field-type \ref{hyp:field} with nondegenerate ground state \ref{hyp:ground}.
  For $L$ sufficiently large, there exists a unique solution $u_L$ to \eqref{eq:Delaunay} with the following properties:
  \begin{enumerate}[label = \textup{(\roman*)}, itemsep = 2pt]
  \item $u_L$ is positive, bounded, radially symmetric in $x'$, and even in $y$.

  \item There exist $\xi,\sigma > 0$ such that for all $x \in D_L$,
    \begin{equation*}
      \abs{u_L(x) - U(x)} \lesssim \e^{-(1 + \xi)L/2} \e^{-\sigma \abs{x}}.
    \end{equation*}
  \end{enumerate}
\end{proposition}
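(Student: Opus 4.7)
The plan is to realize $u_L$ as a small perturbation of an approximate solution built from the ground state $U$, following the implicit function scheme of Malchiodi~\cite{Malchiodi} and adapting it to the present class of nonlinearities. To enforce the symmetries in (i), I would work throughout in the closed subspace
\begin{equation*}
  X \coloneqq \{v \in L^\infty(D_L) : v \text{ is radially symmetric in } x' \text{ and even in } y\},
\end{equation*}
whose elements extend automatically to $L$-periodic, and hence Neumann-compatible, functions on $\R^d$. As the approximate solution I would take $U_L(x', y) \coloneqq \sum_{k \in \mathbb{Z}} U(x', y - kL)$, which lies in $X$, and write $u_L = U_L + \phi$. Substituting into \eqref{eq:Delaunay} reduces the problem to
\begin{equation*}
  \m{L}_L \phi = E_L + N_L(\phi),
\end{equation*}
where $\m{L}_L \coloneqq -\Delta - f'(U_L)$, the residual $E_L \coloneqq -\Delta U_L - f(U_L)$ collects the failure of $U_L$ to solve the equation (arising from cross-interactions between translates of $U$), and $N_L(\phi) \coloneqq f(U_L + \phi) - f(U_L) - f'(U_L)\phi$ is the superlinear remainder.

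The crux is to invert $\m{L}_L$ on $X$ with an operator norm bounded uniformly for $L \gg 1$, measured in a weighted sup norm $\|v\|_\sigma \coloneqq \sup_{x \in D_L} \e^{\sigma\abs{x}}\abs{v(x)}$ for some $\sigma > 0$ strictly less than the exponential decay rate of $U$. The nondegeneracy hypothesis~\ref{hyp:ground} is precisely what is needed here: each generator $\partial_i U$ of $\ker(-\Delta - f'(U)) \cap L^\infty(\R^d)$ is odd in some coordinate, so this kernel meets the symmetric subspace only at zero. A compactness-and-contradiction argument then promotes this injectivity to a uniform quantitative bound. Suppose, toward a contradiction, that there exist $\phi_L \in X$ with $\|\phi_L\|_\sigma = 1$ and $\|\m{L}_L \phi_L\|_\sigma \to 0$; recentering at a near-maximum point of $\e^{\sigma\abs{x}}\abs{\phi_L}$ and invoking standard elliptic estimates yields a $\m{C}^2_{\text{loc}}$ subsequential limit. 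If the recentering stays bounded, $U_L \to U$ locally uniformly and the limit lies in the symmetric subspace on $\R^d$ while solving $(-\Delta - f'(U))\phi_\infty = 0$, forcing $\phi_\infty \equiv 0$ by nondegeneracy. If the recentering escapes to infinity, $f'(U_L) \to f'(0) = -1$ there and the subcritical weight supplies enough coercivity for a direct maximum-principle argument. Either case contradicts the normalization.

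With the uniform inverse in hand, the remainder is a standard contraction. Since $g(u) \coloneqq f(u) + u$ vanishes to order $1 + \xi$ at $0$ (with $\xi$ controlled by the $\m{C}^{1,\gamma}$ regularity of $f$), a direct expansion of $E_L$ on $D_L$ using $\abs{y} \leq L/2$ yields $\|E_L\|_\sigma \lesssim \e^{-(1+\xi)L/2}$; similarly $\|N_L(\phi)\|_\sigma \lesssim \|\phi\|_\sigma^2$ for $\|\phi\|_\sigma$ small. Consequently $\phi \mapsto \m{L}_L^{-1}\bigl(E_L + N_L(\phi)\bigr)$ contracts on the ball $\{\|\phi\|_\sigma \leq C \e^{-(1+\xi)L/2}\} \subset X$ for $L$ sufficiently large, producing a unique fixed point $\phi_L$ and hence a unique $u_L = U_L + \phi_L$ satisfying (i) and the decay estimate (ii). Positivity of $u_L$ for $L$ large is inherited from the positivity of $U_L$ together with the smallness of $\phi_L$. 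I expect the main obstacle to be precisely the uniform invertibility in the second step: orchestrating the interplay between the growing fundamental domain $D_L$, the $L$-dependent potential $f'(U_L)$, and the symmetry class so that nondegeneracy on $\R^d$ transfers cleanly to bounds independent of $L$ requires careful bookkeeping with the exponential weight.
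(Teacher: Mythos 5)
Your proposal follows the same implicit-function-theorem route that the paper invokes: the paper does not prove this proposition but simply cites \cite[Proposition~3.1]{Malchiodi} and observes that Malchiodi's argument (an implicit function theorem in a weighted space, with nondegeneracy supplying the uniform invertibility) extends verbatim to any nonlinearity with a nondegenerate ground state. You have correctly reconstructed that argument, including the two points the paper highlights as the essential ingredients: the symmetry class $X$ kills the translational kernel $\operatorname{span}\{\partial_1 U,\dots,\partial_d U\}$, and the $\mathcal{C}^{1,\gamma}$ regularity of $f$ near $0$ yields the residual bound $\|E_L\|_\sigma \lesssim \e^{-(1+\xi)L/2}$ with $\xi$ of order $\gamma$.

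One small imprecision worth tightening: membership in $X$ (evenness in $y$ alone) does not by itself make a function $L$-periodically extendable in a $\m{C}^1$ fashion; one also needs the Neumann condition $\partial_y v = 0$ at $y = \pm L/2$. For the approximate solution $U_L = \sum_k U(\cdot - kL e_y)$ this holds automatically (it is even about $y = L/2$ by construction), and for the corrector $\phi$ the Neumann condition is imposed as part of the boundary-value problem rather than as a consequence of evenness. This does not affect the validity of the scheme, but the phrasing should reflect that Neumann compatibility is being enforced, not inherited.
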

When we extend $u_L$ to $\R^d$, we can interpret it as an exponentially-small perturbation of a chain of ground states strung along the lattice $\Lambda = \{(0, y) : y \in L \Z\}$.
Indeed, we can write $u_L(x) = \sum_{z \in \Lambda} U(x - z) + w_L(x)$ in $\R^d$ for a residue $w_L$ satisfying
\begin{equation}
  \label{eq:Delaunay-residue}
  \abs{w_L(x)}, \abs{\nab w_L(x)} \lesssim \e^{-(1 + \xi)L/2} \e^{-\sigma \dist(x, \Lambda)}.
\end{equation}
Following terminology arising in the study of singular limits, we refer to $U(x - z)$ as a ``spike.''

\subsection{Approximate solution}
We construct an approximate solution of \eqref{eq:main} resembling rotated copies of $u_L$ at infinity.
Recall that $\{y > -\al \abs{x'}\} \subset \Omega$ for some $\al > 0$.
Let $\Gamma \subset S^{d-1}$ denote the open set of directions inclined below the horizontal plane $\{y = 0\}$ by angle less than $\al/2$.
We distribute our spikes along rays in directions $\theta_\pm \in \Gamma$.
These rays tend downward but separate from $\partial\Omega$ at infinity, as in Figure~\ref{fig:multiple}.
Given $L_\pm \gg 1$, we space the spikes in direction $\theta_\pm$ by distance $L_\pm$, so the solution resembles $u_{L_\pm}$.
We shift this arrangement vertically a half-period $L_0 \approx L_\pm/2$ from $\partial\Omega$.
This is because the boundary behaves almost (but not exactly) like an ``oppositely charged'' spike at distance $2L_0$.
We choose the parameters $\theta_\pm$, $L_\pm$, and $L_0$ at the end of the construction.

In greater detail, let $z_0 \coloneqq (0, L_0) \in \R^2$ be our central point and take shifts $s_\pm \in \R^d$.
We place spikes at centers $\m{Z} = (z_k)_{k \in \Z}$ of the form
\begin{equation*}
  z_k = z_0 + s_{\pm} + \abs{k} \theta_\pm L_\pm + p_k
\end{equation*}
for $k \neq 0$, with signs matching $\sgn k$.
The sequence $(p_k)_{k \in \Z} \subset \R^d$ satisfies $p_0 = 0$ and $\abs{p_k} \lesssim \e^{-\sigma L_0 \abs{k}}$ for some $\sigma > 0$ and all $k \neq 0$.

Here $s_\pm$ represents the shift of the limiting Delaunay solution $u_{L_\pm}$ in direction $\theta_\pm$.
We adjust individual spikes by perturbations $p_k$ that decay exponentially away from $z_0$.
We write $\m{S} \coloneqq (s_-, s_+)$ and $\m{P} \coloneqq (p_k)_{k \in \Z}$, and we consider $\m{Z}$ to be parametrized by $\m{S}$ and $\m{P}$ (viewing $\theta_\pm$, $L_\pm$, and $L_0$ as fixed).
In contrast to \cite{Malchiodi}, we do not restrict $\theta_\pm$, $s_\pm$, and $p_k$ to a plane.
This full-dimensional freedom allows us to compensate for potential asymmetries in $\partial\Omega$.

To incorporate the Delaunay residue $w_L$ from \eqref{eq:Delaunay-residue}, let $\m{R}_\pm$ denote rotation by $\theta_\pm$ and define
\begin{equation*}
  w_\pm(x) \coloneqq (\m{R}_\pm w_{L_\pm})(x - z_0 - s_\pm).
\end{equation*}
This is an apt correction to the spike chain far from the center, but it is less effective near $z_0$.
We therefore cut it off.
Let $\psi \in \m{C}^\infty(\R)$ satisfy $\psi|_{(-\infty, 0]} \equiv 0$ and $\psi|_{[1, \infty)} \equiv 1$ and define
\begin{equation*}
  \psi_\pm(x) \coloneqq \psi\big((x - z_0) \cdot \theta_\pm - L_\pm/2\big).
\end{equation*}
We also introduce an angular cutoff to ensure we satisfy the Dirichlet condition.
Let $\chi \in \m{C}^\infty(\R^d)$ satisfy $\chi \equiv 0$ where $y \leq -\al \abs{x'}$ and $\chi \equiv 1$ where $y \geq -\tfrac{3}{4}\al \abs{x'} + 1$.

These cutoffs are designed to treat all but the central spike.
For $U_0 \coloneqq U(\anon - z_0)$, we must take greater care.
Let $\bar{U}_0$ denote the ``Dirichlet projection'' of $U_0$, namely the unique decaying solution of the linear elliptic problem
\begin{equation}
  \label{eq:projection}
  \begin{cases}
    -\Delta \bar{U}_0 + \bar{U}_0 = g(U_0) & \text{in } \Omega,\\
    \bar{U}_0 = 0 & \text{on } \partial\Omega,
  \end{cases}
\end{equation}
recalling $g(u) \coloneqq f(u) + u$.
With this notation, we can finally define our approximate solution
\begin{equation}
  \label{eq:approx}
  \SP{u}(x) \coloneqq \bar{U}_0(x) + \sum_{k \neq 0} \chi(x) U(x - z_k) + \chi(x)\big[\psi_-(x) w_-(x) + \psi_+(x)w_+(x)\big].
\end{equation}

\subsection{Controlling the projection}
Our analysis of the approximate solution $\SP{u}$ requires a thorough understanding of the projection $\bar{U}_0$ from \eqref{eq:projection}, which provides the central spike in our construction.
We broadly follow the approach of~\cite{NW}, with some quantitative improvements.

We expect $\bar{U}_0$ to resemble the unmodified spike $U_0$, so define the residue $\varphi_0 \coloneqq U_0 - \bar{U}_0$.
This satisfies
\begin{equation*}
  \begin{cases}
    -\Delta \varphi_0 + \varphi_0 = 0 & \text{in } \Omega,\\
    \varphi_0 = U_0 & \text{on } \partial\Omega,
  \end{cases}
\end{equation*}
and by the maximum principle, $\varphi_0 > 0$.
To control $\varphi_0$, we note that a standard ODE argument (\cite[Theorem~2]{GNN81}) yields $A > 0$ such that
\begin{equation}
  \label{eq:ground-decay}
  U(r),-U'(r) = A r^{-\frac{d-1}{2}} \e^{-r} \Big[1 + \m{O}\Big(\frac{1}{r \vee 1}\Big)\Big].
\end{equation}
We also make use of the Green function $K$ satisfying $-\Delta K + K = \delta_0$.
It obeys a nearly identical bound (\cite[Appendix~C]{GNN81}) for some $\kappa > 0$:
\begin{equation}
  \label{eq:Green}
  K(r), -K'(r), K''(r) = \kappa r^{-\frac{d-1}{2}} \e^{-r} [1 + \m{O}(r^{-1})] \quad \text{where } r \geq 1.
\end{equation}
With these estimates, we show:
\begin{lemma}
  \label{lem:projection}
  There exists a constant $C > 0$ such that for all $L_0 \geq C$ and $x \in \Omega$,
  \begin{equation}
    \label{eq:varphi-global}
    C^{-1} L_0^{-\frac{d-1}{2}} (\abs{x}+1)^{-\frac{d-1}{2}} \e^{-L_0} \e^{-\abs{x}} \leq \varphi_0(x) \leq C \min\{K(\abs{x - z_0}), K(\abs{x + z_0})\}.
  \end{equation}
  In particular, $\abss{\bar{U}_0} \leq C U_0$.
\end{lemma}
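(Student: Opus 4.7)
The residue $\varphi_0 \coloneqq U_0 - \bar{U}_0$ solves $(-\Delta + 1)\varphi_0 = 0$ in $\Omega$ with $\varphi_0|_{\partial \Omega} = U_0$. Because $\lambda(-\Delta + 1, \Omega) \geq 1 > 0$, the maximum principle is available for bounded functions on $\Omega$. It immediately gives $\varphi_0 > 0$. Moreover, $U_0$ itself is a supersolution of the homogeneous equation ($(-\Delta + 1)U_0 = g(U_0) \geq 0$ on the range of $U$) with matching boundary data, so $\varphi_0 \leq U_0$ by comparison. Hence $0 \leq \bar{U}_0 \leq U_0$, which already settles the final assertion $\abss{\bar{U}_0} \leq C U_0$. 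The rest of the argument is a matched pair of comparisons: one supersolution for each upper bound, and one subsolution for the lower bound.

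\textbf{Upper bounds.} The function $K(\abs{x - z_0})$ solves $(-\Delta + 1) K(\anon - z_0) = \delta_{z_0} \geq 0$ in $\Omega$ and is thus a positive (distributional) supersolution. The inclusion $\{y > -\alpha\abs{x'}\} \subset \Omega$ forces $\dist(z_0, \partial\Omega) \geq L_0$, so on $\partial \Omega$ we have $\abs{x - z_0} \geq L_0$; combining this with the asymptotics \eqref{eq:ground-decay}--\eqref{eq:Green} (which give $U(r)/K(r) \to A/\kappa$ at infinity) yields $U_0 \leq C K(\abs{\anon - z_0})$ on $\partial\Omega$ for $L_0$ large. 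Comparison extends this into $\Omega$. For the second bound I would use $K(\abs{x + z_0})$, likewise a nonnegative supersolution. The key geometric observation is that $\partial\Omega \subset \{y \leq -\alpha\abs{x'}\} \subset \{y \leq 0\}$, so
\begin{equation*}
  \abs{x - z_0}^2 - \abs{x + z_0}^2 = -4 y L_0 \geq 0 \quad \text{for all } x \in \partial\Omega,
\end{equation*}
whence $\abs{x - z_0} \geq \abs{x + z_0}$ on $\partial\Omega$, and the same asymptotics give $U_0 \leq C K(\abs{\anon + z_0})$ there. Taking the minimum of the two comparisons yields the stated upper bound.

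\textbf{Lower bound.} This is the delicate step. I would build a subsolution anchored on the exterior ball $B_\rho(0, -\ell) \subset \Omega^\cc$ provided by the setup. Put $\hat z \coloneqq (0, -\ell) \notin \Omega$; then $K(\abs{x - \hat z})$ is a positive classical solution of $(-\Delta + 1)\psi = 0$ throughout $\Omega$. I claim that
\begin{equation*}
  \underline{\varphi}(x) \coloneqq c_0\, L_0^{-(d-1)/2}\, \e^{-L_0}\, K(\abs{x - \hat z})
\end{equation*}
satisfies $\underline{\varphi} \leq U_0$ on $\partial\Omega$ for a small absolute constant $c_0 > 0$. The identity
\begin{equation*}
  \abs{x - z_0}^2 - \abs{x - \hat z}^2 = (L_0 + \ell)(L_0 - \ell - 2y)
\end{equation*}
combined with $y \leq 0$ on $\partial\Omega$ gives $\abs{x - z_0} \geq \abs{x - \hat z}$ there, and the prefactor $L_0^{-(d-1)/2} \e^{-L_0}$ is tuned exactly to the worst-case ratio $U(L_0)/K(O(1))$, which is realized at the point of $\partial\Omega$ closest to $z_0$ (where $\abs{x - \hat z}$ is of order $\ell + L_0$-related constants). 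Once the boundary inequality is in hand, the maximum principle gives $\varphi_0 \geq \underline{\varphi}$ throughout $\Omega$. Since $\abs{\hat z} = \ell$ is bounded, \eqref{eq:Green} (together with the local behavior of $K$ near $\hat z$) yields $K(\abs{x - \hat z}) \gtrsim (\abs{x} + 1)^{-(d-1)/2} \e^{-\abs{x}}$ uniformly in $\Omega$, matching the claimed lower bound.

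\textbf{Main obstacle.} The real difficulty is verifying $\underline{\varphi} \leq U_0$ uniformly on $\partial\Omega$: one must track both the polynomial prefactors and the exponential gap $\abs{x - z_0} - \abs{x - \hat z}$ as $x$ ranges from the point of $\partial\Omega$ nearest $z_0$ (where the polynomial ratio is most adverse) out to infinity along $\partial\Omega$ (where the exponential gain is smallest). Once this boundary-layer estimate is settled, the remainder is a pair of clean comparisons powered by $\lambda(-\Delta + 1, \Omega) \geq 1$.
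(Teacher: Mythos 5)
Your overall scheme matches the paper's: $K(\lvert\cdot - z_0\rvert)$ and $K(\lvert\cdot + z_0\rvert)$ as comparison supersolutions for the two upper bounds, and $K(\lvert\cdot - q_0\rvert)$ with $q_0 = (0,-\ell)$ (your $\hat z$) as the comparison function for the lower bound, with the prefactor $L_0^{-(d-1)/2}\e^{-L_0}$. However, there are two concrete problems.

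First, the claim $g(U_0) \geq 0$ on the range of $U$, which you use to make $U_0$ a supersolution of $(-\Delta+1)\psi = 0$ and conclude $\varphi_0 \leq U_0$ directly, is not justified by the hypotheses. With $f \in \m{C}^{1,\gamma}_{\mathrm{loc}}$ normalized to $f'(0) = -1$, one only knows $g(0) = g'(0) = 0$ and $\lvert g(s)\rvert \lesssim s^{1+\gamma}$ near $0$; the sign of $g$ for small positive arguments is undetermined, and the paper's Lemma on the constant $D = \int g(U)\e^{-y}$ is proved via boundary integrals precisely because $g$ may change sign. The conclusion $\lvert\bar U_0\rvert \leq CU_0$ is better obtained as the paper does: combine the already-proved upper bound $\varphi_0 \lesssim K(\lvert\cdot - z_0\rvert)$ (which matches $U_0$ up to a constant for $\lvert x - z_0\rvert \geq 1$, since $K(r) \asymp U(r)$ there) with the boundedness of $\varphi_0$, which controls the region $\lvert x - z_0\rvert < 1$ where $K$ is singular but $U_0$ is bounded below.

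Second, in the lower-bound verification, the inequality you extract from the identity $\lvert x - z_0\rvert^2 - \lvert x - \hat z\rvert^2 = (L_0+\ell)(L_0 - \ell - 2y)$, namely $\lvert x - z_0\rvert \geq \lvert x - \hat z\rvert$ on $\partial\Omega$, points in the unhelpful direction: to bound $U_0(x) = U(\lvert x - z_0\rvert)$ from below, one needs an \emph{upper} bound on $\lvert x - z_0\rvert$, not a lower one. The missing step is the triangle inequality $\lvert x - z_0\rvert \leq \lvert x - \hat z\rvert + (L_0 + \ell)$, combined with $\lvert x - \hat z\rvert \geq \rho$ on $\partial\Omega$ (which lets you absorb the $L_0+\ell$ shift into a factor $L_0\lvert x - \hat z\rvert$ in the polynomial prefactor). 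This yields $U_0 \gtrsim L_0^{-(d-1)/2}\e^{-L_0}K(\lvert\cdot - \hat z\rvert)$ uniformly on $\partial\Omega$, which is exactly the boundary estimate you acknowledge as the "main obstacle'' but leave unverified.
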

\begin{proof}
  For the upper bound, we observe that \eqref{eq:ground-decay} and \eqref{eq:Green} imply that $U_0 \lesssim K(x - z_0)$ on $\partial\Omega$.
  Since $K$ is a supersolution of $-\Delta \varphi + \varphi = 0$, the maximum principle implies that $\varphi_0 \lesssim K(x - z_0)$.
  Moreover, because $\varphi_0$ is bounded, we have $0 < \varphi_0 \lesssim U_0$, which implies $\abss{\bar{U}_0} \lesssim U_0$.
  On the other hand, because $y < 0$ on $\partial\Omega$ and $U$ is radially decreasing, we also have
  \begin{equation*}
    U_0(x - z_0) = U(\abs{x - z_0}) \leq U(\abs{x + z_0}) \lesssim K(x + z_0) \quad \text{on } \partial\Omega.
  \end{equation*}
  By the same reasoning, $\varphi_0 \lesssim K(x + z_0)$.

  For the lower bound, we have chosen coordinates so that $B_\rho(q_0) \subset \Omega^\cc$ for $q_0 = (0,-\ell)$ and some $\ell > 0$ independent of $L_0$.
  By the triangle inequality,
  \begin{equation*}
    \abs{x - z_0} \leq \abs{x - q_0} + \abs{z_0 - q_0} = \abs{x - q_0} + \ell + L_0.
  \end{equation*}
  Also, $\abs{x - q_0} \geq \rho$ implies that $\abs{x - q_0} + \ell + L_0 \lesssim L_0 \abs{x - q_0}$.
  Hence on $\partial\Omega$, \eqref{eq:ground-decay} and \eqref{eq:Green} yield
  \begin{align*}
    U_0(x) \geq U(\abs{x - q_0} + \ell + L_0) &\gtrsim (\abs{x - q_0}+L_0)^{-\frac{d-1}{2}} \e^{-L_0} \e^{-\abs{x - q_0}}\\
                                              &\gtrsim L_0^{-\frac{d-1}{2}} \e^{-L_0} K(x - q_0).
  \end{align*}
  Because $q_0 \not \in \Omega$, $K(x - q_0)$ solves $-\Delta \varphi + \varphi = 0$ in $\Omega$.
  The desired lower bound follows from the maximum principle and \eqref{eq:Green}.
\end{proof}
We are particularly interested in the behavior of $\varphi_0$ near the central spike at $z_0$.
\begin{lemma}
  \label{lem:center}
  There exist $C > 1$ such that for all $L_0 \geq C$,
  \begin{gather}
    C^{-1} L_0^{-(d-1)} \e^{-2L_0} \leq \varphi_0(z_0) \leq C L_0^{-(d-1)} \e^{-2L_0},\label{eq:center}\\
    C^{-1} \varphi_0(z_0) \leq - \partial_{L_0}[\varphi_0(z_0)] \leq C \varphi_0(z_0).\label{eq:derivative}
  \end{gather}
  Also, there exists $\beta > 0$ such that for all $\abs{x'} \in \R^{d-1}$,
  \begin{equation}
    \label{eq:horizontal}
    \varphi_0(x',0) \leq C L^{-\frac{d-1}{2}} \e^{-L_0} \e^{-\beta\abs{x'}}.
  \end{equation}
\end{lemma}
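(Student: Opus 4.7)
The plan is to prove the three estimates separately; the main work lies in the upper bound of \eqref{eq:center}. For the lower bound in \eqref{eq:center}, I would simply evaluate the lower bound of Lemma~\ref{lem:projection} at $x = z_0$: since $\abs{x} = L_0$, the two polynomial factors combine to $\gtrsim L_0^{-(d-1)}$, yielding $\varphi_0(z_0) \gtrsim L_0^{-(d-1)} \e^{-2L_0}$. The reflection-type upper bound $K(|x+z_0|)$ from Lemma~\ref{lem:projection}, however, gives only $K(2L_0) \sim L_0^{-(d-1)/2} \e^{-2L_0}$ at $z_0$---too weak by a polynomial factor---so a finer argument is required.

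My approach to the upper bound would be a Green-function representation. Writing $\bar U_0 = U_0 - \varphi_0$ and subtracting the Green identities for $\bar U_0$ on $\Omega$ and $U_0$ on $\R^d$ gives
\[
\varphi_0(z_0) = \int_\Omega H(z_0, y)\, g(U_0(y))\,dy + \int_{\Omega^\cc} K(|z_0 - y|)\, g(U_0(y))\,dy,
\]
where $H(x, y) \coloneqq K(|x - y|) - G_\Omega(x, y)$ is the regular part of the Dirichlet Green function. The exterior integral is trivially $O(L_0^{-(d-1)/2} \e^{-3L_0})$ using $|g(u)| \lesssim u^2$ near zero, and is negligible. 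For the interior integral, $y \mapsto H(z_0, y)$ is smooth near $z_0$ (distance $\gtrsim L_0$ from $\partial\Omega$) while $g(U_0(\cdot))$ is exponentially concentrated at $z_0$; Taylor expansion together with the radial symmetry of $g(U_0(\cdot))$ about $z_0$ (which annihilates the first-order term) reduces the task to showing $H(z_0, z_0) \lesssim L_0^{-(d-1)} \e^{-2L_0}$. Since $y \mapsto H(z_0, y)$ solves $(-\Delta + 1)\psi = 0$ in $\Omega$ with boundary value $K(|z_0 - \xi|)$ on $\partial\Omega$, I would construct a supersolution exploiting the ball obstacle $B_\rho(q_0) \subset \Omega^\cc$ of the form $\Psi(y) = c\, K(|z_0 - q_0|)\, K(|y - q_0|)/K(\rho)$. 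This is $(-\Delta+1)$-harmonic in $\Omega$, and by using the containment $\partial\Omega \subset \{y \leq -\alpha|x'|\}$, which gives $|z_0 - \xi|^2 \geq |\xi'|^2 + (L_0 + \alpha|\xi'|)^2$ and hence $K(|z_0 - \xi|) \lesssim L_0^{-(d-1)/2} \e^{-L_0} \e^{-\alpha|\xi'|}$, together with a matching decay estimate for $K(|y - q_0|)$ on $\partial\Omega$, one verifies $\Psi \geq K(|z_0 - \xi|)$ for $c = c(\alpha, \ell, \rho, d)$ sufficiently large. The maximum principle then yields $H(z_0, z_0) \leq \Psi(z_0) \lesssim K(L_0 + \ell)^2/K(\rho) \lesssim L_0^{-(d-1)} \e^{-2L_0}$.

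The derivative bound \eqref{eq:derivative} follows from the same representation applied to $\partial_{L_0} \varphi_0$, which satisfies the same elliptic equation with boundary value $\partial_{L_0} U_0(\xi) = -U'(|z_0 - \xi|)(L_0 - \xi_y)/|z_0 - \xi|$; since $-U'/U \to 1$ for large argument and $(L_0 - \xi_y)/|z_0 - \xi| \to 1$ in the dominant boundary region near the vertex, we obtain $|\partial_{L_0} \varphi_0(z_0)| \asymp \varphi_0(z_0)$ with the expected sign. For the horizontal decay \eqref{eq:horizontal}, I would use the Poisson representation $\varphi_0(x', 0) = \int_{\partial\Omega} P_\Omega((x', 0), \xi)\, U_0(\xi)\,dS$ and split the integration at $|\xi'| = |x'|/2$: in the far region $|\xi'| > |x'|/2$ the boundary value provides the factor $\e^{-\alpha|x'|/2}$, while in the near region the kernel bound $P_\Omega \lesssim K(|(x', 0) - \xi|)$ yields exponential decay in $|x'|$. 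Combining gives the claim with $\beta = \alpha/2$. The principal obstacle throughout is the sharp upper bound in \eqref{eq:center}; both the aperture condition $\{y > -\alpha|x'|\} \subset \Omega$ and the ball obstacle $B_\rho(q_0) \subset \Omega^\cc$ are essential, and verifying $\Psi \geq K(|z_0 - \xi|)$ on all of $\partial\Omega$ requires a careful matching of polynomial and exponential prefactors depending on whether $\xi$ lies near or far from the obstacle.
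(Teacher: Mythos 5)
Your proposal diverges substantially from the paper's argument, which compares $\varphi_0$ directly against the auxiliary supersolution $\psi(x) = \int_J K(x-z)\e^{-\abs{z_d}}\,dz$ over the cone $J = \{y < -\al\abs{x'}\}$ (a convolution that is ``spread out'' along the cone, not a single translate of $K$), and never introduces the regular part $H$ of the Dirichlet Green function. Your Green-representation reduction of the interior integral to $H(z_0,z_0)$ is a reasonable and potentially cleaner route, but the supersolution you propose to bound $H(z_0,z_0)$ does not actually work, and this is not just a matter of ``careful matching of prefactors.''

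Concretely, take $\xi = (\xi',-\al\abs{\xi'}) \in \partial J$ with $\abs{\xi'} \to \infty$. Then $\abs{z_0-\xi} - \abs{q_0-\xi} \to \al(L_0+\ell)/\sqrt{1+\al^2}$, so
\[
  \frac{K(\abs{z_0-\xi})}{K(L_0+\ell)\,K(\abs{\xi-q_0})}
  \;\sim\; \frac{\e^{-\al(L_0+\ell)/\sqrt{1+\al^2}}}{K(L_0+\ell)}
  \;\asymp\; L_0^{\frac{d-1}{2}}\,\e^{(L_0+\ell)\big(1 - \al/\sqrt{1+\al^2}\big)},
\]
which grows exponentially in $L_0$ since $\al/\sqrt{1+\al^2} < 1$. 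Therefore $\Psi \geq K(\abs{z_0-\cdot})$ on $\partial\Omega$ forces $c$ to be exponentially large in $L_0$, destroying the estimate $H(z_0,z_0) \lesssim L_0^{-(d-1)}\e^{-2L_0}$. The issue is structural: a single translate $K(\abs{\cdot - q_0})$ decays at the \emph{same} exponential rate in every direction, whereas the boundary value $K(\abs{z_0-\cdot})$ restricted to $\partial J$ decays at the strictly slower rate $\e^{-\al\abs{\xi'}}$ (since moving along the cone boundary separates from $z_0$ less efficiently than moving radially). The paper's $\psi$, by contrast, satisfies $\psi \gtrsim \e^{-\abs{y}}$ on all of $\partial J$ precisely because its source term is spread throughout $J$; that is exactly the extra slack needed, and there is no translate of $K$ with this property. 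A secondary point: in \eqref{eq:derivative} you should account for the chain-rule term $-\partial_y\varphi_0(z_0)$ coming from the $L_0$-dependence of the evaluation point $z_0$ itself, which the paper treats as a separate piece; your sketch addresses only the partial derivative $\partial_{L_0}\varphi_0$ through the boundary data.
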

\noindent
We use the final estimate as input in the next lemma.

By \eqref{eq:center}, $\varphi_0(z_0)$ agrees with $U(2L_0)$ at exponential order, as one may expect from the method of images.
However, these quantities differ by a polynomial factor.
As a result, the boundary behaves slightly differently from a negative spike placed at $-z_0$.
\begin{proof}
  Where $y < 0$, we have $\abs{x - z_0} \geq L_0 - y \geq L_0$.
  By \eqref{eq:ground-decay} and \eqref{eq:varphi-global}, we find
  \begin{equation}
    \label{eq:cone-boundary}
    \varphi_0(x) \lesssim L_0^{-\frac{d-1}{2}} \e^{-L_0} \e^{-\abs{y}}.
  \end{equation}
  Define the cone $J \coloneqq \{y < -\al\abs{x'}\}$ and let $\psi$ solve
  \begin{equation*}
    -\Delta \psi + \psi = \tbf{1}_J \e^{-\abs{y}}.
  \end{equation*}
  Then we can write
  \begin{equation*}
    \psi(x) = \int_J K(x - z) \e^{-\abs{z_d}} \d z.
  \end{equation*}
  For all $x \in \partial J = \{y = -\al \abs{x'}\}$, we have $\abs{B_1(x) \cap J} \gtrsim 1$.
  Because $K$ is decreasing in $r$, we obtain
  \begin{equation*}
    \psi(x) \geq \e^{-\abs{y}-1} \int_{B_1(x) \cap J} K(x - z) \d z \geq \e^{-\abs{y}-1} K(1) \abs{B_1(x) \cap J} \gtrsim \e^{-\abs{y}}.
  \end{equation*}
  By hypothesis, $\partial\Omega \subset J$.
  Since $\psi(x) \gtrsim \e^{-\abs{y}}$ on $\partial J$, \eqref{eq:cone-boundary} and the maximum principle yield
  \begin{equation}
    \label{eq:varphi-psi}
    \varphi_0 \lesssim L_0^{-\frac{d-1}{2}} \e^{-L_0} \psi \quad \text{in } J^\cc.
  \end{equation}
  Now for each $z \in J$, $\abs{z - z_0} \geq L_0 + \abs{z_d} \geq L_0$.
  Hence by \eqref{eq:Green},
  \begin{equation*}
    \psi(z_0) = \int_J K(z_0 - z) \e^{-\abs{z_d}} \d z \lesssim L_0^{-\frac{d-1}{2}} \e^{-L_0} \int_J \e^{-2\abs{z_d}} \d z \lesssim L_0^{-\frac{d-1}{2}} \e^{-L_0}.
  \end{equation*}
  Here we use the conical nature of $J$ to ensure that the final integral is finite.
  Then the upper bound  in \eqref{eq:center} follows from \eqref{eq:varphi-psi}, and the lower from \eqref{eq:varphi-global}.

  Now consider \eqref{eq:horizontal}.
  By the Harnack inequality, it suffices to consider $x' \in \R^{d-1}$ such that $r \coloneqq \abs{x'} \geq 1$.
  Then we use \eqref{eq:Green} to write
  \begin{equation*}
    \psi(x',0) \lesssim \int_J \e^{-\abs{(x',0) - z} - \abs{z_d}} \d z
  \end{equation*}
  A routine calculation shows that
  \begin{equation*}
    \min_{z \in J} (\abs{(x',0) - z} + \abs{z_d}) = \frac{2 \al}{1 + \al^2} r.
  \end{equation*}
  Fix $\beta \in \big(0, \frac{2\al}{1 + \al^2}\big)$.
  Then Laplace's method yields $\psi(x', 0) \lesssim \e^{-\beta r}$, and \eqref{eq:horizontal} follows from \eqref{eq:varphi-psi}.

  For the derivative estimate \eqref{eq:derivative}, we observe that $\varphi_0$ itself depends on $L_0$ through its boundary condition.
  We can write
  \begin{equation}
    \label{eq:chain}
    -\partial_{L_0}[\varphi_0(z_0)] = -\partial_y \varphi_0(z_0) - (\partial_{L_0}\varphi_0)(z_0).
  \end{equation}
  Using a kernel representation for $\varphi_0 > 0$ on $\{y > 0\}$, one can readily check that $\partial_y \varphi_0 < 0$ there, and Schauder and Harnack estimates yield $0 < -\partial_y \varphi_0(z_0) \lesssim \varphi_0(z_0)$.
  For the second term, we let $w \coloneqq -\partial_{L_0} \varphi_0$, which satisfies
  \begin{equation*}
    \begin{cases}
      -\Delta w + w = 0 & \text{in } \Omega,\\
      w = -\partial_y U_0 & \text{on } \partial\Omega.
    \end{cases}
  \end{equation*}
  We can write
  \begin{equation*}
    -\partial_y U_0(x) = \partial_y[U(x - z_0)] = -\frac{L_0 - y}{\abs{x - z_0}} U'(x - z_0).
  \end{equation*}
  Because $y \leq -\al \abs{x'}$ on $\partial \Omega$, we have $\frac{L_0 - y}{\abs{x - z_0}} \asymp 1$.
  Moreover, \eqref{eq:ground-decay} implies that $-U' \asymp U$ outside the unit ball.
  So $w|_{\partial \Omega} \asymp \varphi_0|_{\partial \Omega}$.
  It follows from the maximum principle that $w \asymp \varphi_0$.
  Combining these observations in \eqref{eq:chain}, we obtain \eqref{eq:derivative}.
\end{proof}
We next determine the exponential character of $\varphi_0$ near $z_0$:
\begin{lemma}
  \label{lem:exponential}
  There exists $C > 1$ such that for all $L_0 \geq C$ and $\abs{x} \leq L_0^{1/3}$,
  \begin{equation}
    \label{eq:uniform-ratio}
    \Big|\frac{\varphi_0(x + z_0)}{\varphi_0(z_0)} \e^y - 1\,\Big| \leq C L_0^{-1/3} \And \abs{\nab_{x'} \varphi_0(x + z_0)} \leq C L_0^{-2/3} \e^{-y} \varphi_0(z_0).
  \end{equation}
\end{lemma}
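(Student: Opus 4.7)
The plan is to pass to shifted coordinates in which $z_0$ lies at the origin and to analyze the rescaled function
\begin{equation*}
  g(\xi) \coloneqq e^{\xi_d}\,\frac{\varphi_0(z_0 + \xi)}{\varphi_0(z_0)}, \qquad g(0) = 1.
\end{equation*}
A direct computation from $-\Delta\varphi_0 + \varphi_0 = 0$ yields $\Delta g - 2\partial_d g = 0$ in $\tilde\Omega := \Omega - z_0$; the operator is elliptic with constant coefficients. The first bound in \eqref{eq:uniform-ratio} is equivalent to $|g(\xi) - 1| \leq CL_0^{-1/3}$, and the second follows from interior Schauder estimates on the drift equation once the first is in hand.

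The first task is a uniform upper bound $g \leq C$ on the \emph{large} ball $B_{L_0^{2/3}}(0)$. Recall from the proof of Lemma~\ref{lem:center} that $\varphi_0 \lesssim L_0^{-(d-1)/2} e^{-L_0}\psi$ in $J^\cc$, and $B_{L_0^{2/3}}(z_0)\subset J^\cc$ for $L_0$ large. A cheap bound on $\psi$ suffices here: since $|z_0+\xi-z|\geq L_0+\xi_d-z_d$ for $z\in J$ and $K$ is decreasing, $\psi(z_0+\xi) \leq \int_J K(L_0+\xi_d-z_d)\,e^{z_d}\,dz \lesssim L_0^{-(d-1)/2} e^{-L_0-\xi_d}$, which combined with $\varphi_0(z_0) \gtrsim L_0^{-(d-1)} e^{-2L_0}$ gives $g\leq C$ on $B_{L_0^{2/3}}(0)$.

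Next, refine the analysis of $\psi$ via Laplace's method on the smaller scale. For $|\xi|\leq L_0^{1/3}$, Taylor-expand
\begin{equation*}
  |z_0+\xi-z| = L_0 + \xi_d - z_d + \frac{|\xi'-z'|^2}{2(L_0+\xi_d-z_d)} + O(L_0^{-1}),
\end{equation*}
for $z$ in the bounded region where the integrand of $\psi(x)=\int_J K(|x-z|)\,e^{z_d}\,dz$ is not exponentially small. The quadratic correction is $O(|\xi'|^2/L_0) = O(L_0^{-1/3})$, and substituting into the asymptotic \eqref{eq:Green} for $K$ produces the sharp pointwise expansion $\psi(z_0+\xi) = \psi(z_0)\,e^{-\xi_d}\bigl(1 + O(L_0^{-1/3})\bigr)$. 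A parallel Laplace expansion for the kernel subsolution $K(\cdot-q_0)$ from Lemma~\ref{lem:projection}'s lower-bound argument delivers a matching lower bound of the form $\varphi_0(z_0+\xi) \geq c L_0^{-(d-1)} e^{-2L_0-\xi_d}(1+O(L_0^{-1/3}))$. Pairing these with the upper bound via $\psi$ yields
\begin{equation*}
  \frac{\varphi_0(z_0+\xi)}{\varphi_0(z_0)} = e^{-\xi_d}\bigl(1 + O(L_0^{-1/3})\bigr) \quad \text{on } B_{L_0^{1/3}}(0),
\end{equation*}
which is the first estimate in \eqref{eq:uniform-ratio}. For the second, note that $g-1$ solves $\Delta u - 2\partial_d u = 0$ with $(g-1)(0)=0$ and $\|g-1\|_{L^\infty(B_{L_0^{1/3}})} \leq CL_0^{-1/3}$; interior Schauder estimates for this constant-coefficient operator yield $|\nabla g(\xi)| \leq CL_0^{-2/3}$ on a slightly smaller ball. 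Since $\nabla_{\xi'}\varphi_0(z_0+\xi) = \varphi_0(z_0)\,e^{-\xi_d}\nabla_{\xi'}g(\xi)$, the claimed horizontal gradient bound follows.

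The main obstacle is closing the gap between the sharp upper bound from $\psi$ and the lower bound from $K(\cdot-q_0)$ so that the multiplicative constants agree up to $O(L_0^{-1/3})$ (rather than merely giving two-sided $\asymp$ bounds). This requires careful bookkeeping in both Laplace expansions and in particular exploits that the leading-order constant is pinned down by the value $\varphi_0(z_0)$ itself at $\xi=0$; alternatively, one can bypass constant-matching by observing that the quotient $\varphi_0/\psi$ satisfies a uniformly elliptic drift equation with no zero-order term, so Harnack-type arguments on a unit scale give its near-constancy in the relevant window.
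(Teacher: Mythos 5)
Your proposal takes a genuinely different route from the paper's, and the gap you flag at the end is real and not resolved by the workarounds you sketch. The paper does not sandwich $\varphi_0$ between a $\psi$-barrier from above and a $K(\cdot - q_0)$-barrier from below; instead it represents $\varphi_0$ \emph{directly} on the half-space $\{y>0\}\subset\Omega$ via the half-space Poisson kernel for $-\Delta+1$,
\[
\varphi_0(x + z_0) = -2\int_{\R^{d-1}} \partial_y K(x' - z', y + L_0)\, \varphi_0(z', 0)\, \d z',
\]
and then shows that the kernel satisfies $\partial_y K(x'-z', y+L_0)/\partial_y K(-z', L_0) = \e^{-y}[1 + \m{O}(L_0^{-1/3})]$ uniformly for $|x|,|z'|\lesssim L_0^{1/3}$, with the tail $|z'|>L_0^{1/3}$ negligible by \eqref{eq:horizontal}. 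Because the same boundary datum $\varphi_0(\cdot,0)$ appears in both $\varphi_0(x+z_0)$ and $\varphi_0(z_0)$, the multiplicative constant cancels identically — there is simply no constant to match. Your approach, by contrast, compares $\varphi_0$ to two \emph{distinct} explicit barriers, and the resulting two-sided estimate is only $\asymp$, exactly as you say.

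Neither of your two proposed fixes closes the gap. ``Pinning the constant by $\varphi_0(z_0)$ itself'' does not help because the inequalities $\varphi_0 \lesssim L_0^{-(d-1)/2}\e^{-L_0}\psi$ and $\varphi_0 \gtrsim L_0^{-(d-1)}\e^{-2L_0}K(\cdot-q_0)$-type lower bound have unrelated implied constants; evaluating both at $\xi=0$ does not force them to coincide to within $\m{O}(L_0^{-1/3})$. The Harnack route on $g=\e^{\xi_d}\varphi_0(z_0+\cdot)/\varphi_0(z_0)$ fails quantitatively: $g$ solves $\Delta g - 2\partial_d g = 0$, whose drift has fixed magnitude $2$, so interior gradient and oscillation-decay estimates for this operator are uniform only on unit-scale balls; rescaling to $B_R$ with $R=L_0^{1/3}$ multiplies the drift by $R$ and the constants in Schauder/Krylov--Safonov degrade (indeed can blow up exponentially in the drift bound). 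Equivalently, the divergence form $\op{div}(\psi^2\nabla(\varphi_0/\psi))=0$ has coefficient $\psi^2\sim\e^{-2\xi_d}$ varying by a factor $\e^{L_0^{1/3}}$ over the relevant window, so De Giorgi--Nash--Moser oscillation decay does not apply. Entire positive solutions of $\Delta g - 2\partial_d g = 0$ need not be constant ($g=\e^{2\xi_d}$ is one), so there is no Liouville principle forcing near-constancy either. You need the exact-representation step the paper uses, or some equivalent identity tying $\varphi_0(z_0+\xi)$ to $\varphi_0(z_0)$ through a single kernel, to make the constants cancel.
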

\begin{proof}
  It is well known that we can write
  \begin{equation*}
    \varphi_0(x + z_0) = -2\int_{\R^{d-1}} \partial_y K(x' - z', y + L_0) \varphi_0(z', 0) \d z'.
  \end{equation*}
  Fix $\abs{x} \leq L_0^{1/3}$ and let $D$ denote the ball of radius $L_0^{1/3}$ in $\R^{d-1}$.
  We claim the contributions to this integral from $D^\cc$ are negligible.
  Indeed by \eqref{eq:Green}, we have $\abss{\partial_y K(x' - z', y + L_0)} \lesssim \e^{-y} \e^{-L_0}$.
  Using \eqref{eq:center} and \eqref{eq:horizontal} from Lemma~\ref{lem:center}, we find
  \begin{equation*}
    \int_{D^\cc} \abs{\partial_y K(x' - z', y + L_0) \varphi_0(z', 0)} \d z' \lesssim \e^{-y-2L_0} \int_{D^\cc} \e^{-\beta \abs{x'}} \d z' \lesssim \e^{-y - c L_0^{1/3}} \varphi_0(z_0)
  \end{equation*}
  for $c \in (0, \beta)$.
  As a consequence,
  \begin{equation}
    \label{eq:cutoff}
    \varphi_0(x + z_0) = -2\int_D \partial_y K(x' - z', y + L_0) \varphi_0(z', 0) \d z' + \m{O}\big[\e^{-c L_0^{1/3}} \e^{-y} \varphi_0(z_0)\big].
  \end{equation}

  Now let $d_1 \coloneqq \abs{(x' - z', y + L_0)}$ and $d_0 \coloneqq \abs{(z',L_0)}$.
  Then
  \begin{equation*}
    d_1 = \sqrt{L_0^2 + 2 y L_0 + \m{O}\big(L_0^{2/3}\big)} = L_0 \sqrt{1 + 2y L_0^{-1} + \m{O}\big(L_0^{-4/3}\big)} = L_0 + y + \m{O}\big(L_0^{-1/3}\big)
  \end{equation*}
  and similarly $d_0 = L_0 + \m{O}\big(L_0^{-1/3}\big)$.
  Thus \eqref{eq:Green} implies that
  \begin{equation*}
    \frac{\partial_y K(x' - z', y + L_0)}{\partial_y K(- z', L_0)} = \e^{-y} \big[1 + \m{O}(L_0^{-1/3})\big].
  \end{equation*}
  Using this ratio in \eqref{eq:cutoff}, we obtain the first part of \eqref{eq:uniform-ratio}.

  For the second, we write
  \begin{equation*}
    \nab_{x'}\varphi_0(x + z_0) = -2\int_{\R^{d-1}} \partial_y \nab_{x'} K(x' - z', y + L_0) \varphi_0(z', 0) \d z'.
  \end{equation*}
  Let $p \coloneqq (x' - z', y + L_0)$.
  Given $1 \leq i \leq d-1$, radial symmetry yields
  \begin{equation*}
    \abs{\partial_{x_i} K(p)} = \frac{\abs{x_i' - z_i'}}{\abs{p}} \abs{K'(\abs{p})} \lesssim L_0^{-2/3} K(p).
  \end{equation*}
  One can similarly check that $\abss{\partial_y \nab_{x'}K(p)} \lesssim L_0^{-2/3} K(p)$.
  Then the second part of \eqref{eq:uniform-ratio} follows from the estimates above. 
\end{proof}
This exponential limit will play an important role in determining the angles $\theta_\pm$ and the lengths $L_\pm$ and $L_0$.
Our analysis also involves coefficients consisting of the following integrals:
\begin{lemma}
  \label{lem:positive}
  We have
  \begin{equation*}
    D \coloneqq \int_{\R^d} g'(U) \partial_y U \e^{-y} = \int_{\R^d} g(U) \e^{-y} > 0 \And E \coloneqq \int_{\R^d}g(U) (\partial_y U)^2 > 0.
  \end{equation*}
\end{lemma}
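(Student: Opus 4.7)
The plan proceeds in three parts: verify the identity for $D$, establish $D > 0$, and then show $E > 0$. Since $g(u) = f(u) + u$ with $f'(0) = -1$, we have $g(0) = g'(0) = 0$, so $g(u) = O(u^2)$ near zero and $g(U) = O(\e^{-2\abs{x}})$ at infinity by \eqref{eq:ground-decay}. This decay dominates $\e^{-y}$ in every direction, so the boundary terms in all integrations by parts below will vanish.

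\emph{The identity.} By the chain rule, $g'(U)\,\partial_y U = \partial_y[g(U)]$. Integrating by parts in $y$ against $\e^{-y}$ (whose $y$-derivative is $-\e^{-y}$) gives
\begin{equation*}
  \int_{\R^d} g'(U)\,\partial_y U\, \e^{-y} = \int_{\R^d} \partial_y[g(U)]\,\e^{-y} = \int_{\R^d} g(U)\,\e^{-y},
\end{equation*}
as claimed.

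\emph{Positivity of $D$.} I would match asymptotics via the convolution formula $U = K * g(U)$, where $K$ is the Green function of $-\Delta + 1$ satisfying \eqref{eq:Green}. Evaluating both sides at $z = R e_y$ as $R \to \infty$, the left side gives $U(R e_y) \sim A R^{-(d-1)/2}\e^{-R}$ by \eqref{eq:ground-decay}. For the right side, expand $\abs{R e_y - x} = R - y + O(R^{-1})$ for $x$ fixed; \eqref{eq:Green} then yields the pointwise asymptotic $K(R e_y - x) \sim \kappa R^{-(d-1)/2} \e^{-R}\,\e^{y}$. A dominated-convergence argument (with the integrable dominant $\e^y g(U) \lesssim \e^{-\abs{x}}$, and a routine check that the region $\abs{x} \gtrsim \sqrt{R}$ contributes negligibly) gives
\begin{equation*}
  U(R e_y) \sim \kappa R^{-(d-1)/2} \e^{-R} \int_{\R^d} g(U)\,\e^{y}.
\end{equation*}
Matching coefficients yields $A = \kappa \int_{\R^d} g(U)\,\e^{y}$. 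The radial symmetry of $U$ identifies $\int g(U)\e^{y} = \int g(U)\e^{-y} = D$, so $D = A/\kappa > 0$ since $A, \kappa > 0$.

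\emph{Positivity of $E$.} Radial symmetry of $U$ gives $\int (\partial_y U)^2 h(\abs{x}) = \tfrac{1}{d}\int \abs{\nabla U}^2 h(\abs{x})$ for any radial weight $h$, so $E = \tfrac{\abs{S^{d-1}}}{d} \int_0^\infty g(U(r))\,U'(r)^2\, r^{d-1}\, dr$. Substitute $g(U) = -U'' - \tfrac{d-1}{r} U' + U$ from the radial form of $(-\Delta + 1)U = g(U)$. Since $(U')^2 U'' = \tfrac{1}{3}((U')^3)'$, integrating by parts in $r$ (with $U'(0) = 0$ and exponential decay killing boundary contributions) converts the $-U''(U')^2 r^{d-1}$ term into $\tfrac{d-1}{3}\int_0^\infty (U')^3 r^{d-2}\,dr$, which combines with $-(d-1)\int_0^\infty (U')^3 r^{d-2}\,dr$ from the second piece to yield
\begin{equation*}
  E = \frac{\abs{S^{d-1}}}{d}\left[\int_0^\infty U\,(U')^2\, r^{d-1}\, dr \;-\; \frac{2(d-1)}{3}\int_0^\infty (U')^3\, r^{d-2}\, dr\right].
\end{equation*}
Both terms are strictly positive: the first trivially, and the second because $U' < 0$ on $(0, \infty)$ by Gidas--Ni--Nirenberg strict monotonicity, so $(U')^3 < 0$.

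The main obstacle is extracting positivity from integrals whose integrand can genuinely change sign (for bistable $f$, $g(U)$ is negative where $U$ is large). Both arguments succeed by rewriting: for $D$, as a multiple of the positive amplitude $A$ of the ground state's exponential tail; for $E$, as a sum of manifestly positive integrals in the radial profile and its derivative.
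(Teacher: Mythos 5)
Your proof is correct, and for both positivity claims it takes a genuinely different route from the paper. For the identity $D = \int g(U)\e^{-y}$, you argue the same way the paper does (integration by parts in $y$); one small imprecision is that with $g \in \m{C}^{1,\gamma}_{\text{loc}}$ and $g(0) = g'(0) = 0$ you only get $g(u) = O(u^{1+\gamma})$, not $O(u^2)$ — but this is still $\e^{-(1+\gamma)\abs{x}}$ decay for $g(U)$, which dominates $\e^{\abs{x}}$ and so suffices everywhere you use it. For $D > 0$, you match the exponential tail of $U$ with $U = K * g(U)$ at $R\mathbf e_y$ to read off $D = A/\kappa$; the paper instead symmetrizes the weight $\e^{-y}$ to the radial solution $V_*$ of $-\Delta V_* + V_* = 0$, writes $D = \int g(U) V_*$, and applies Green's identity on $B_R$, sending $R \to \infty$ to read off a Wronskian-type limit $2AB\abss{S^{d-1}}$. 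Both proofs identify $D$ with (a positive multiple of) the tail amplitude $A$, yours via convolution, the paper's via the exponentially growing linear solution; they are essentially equivalent in spirit but require slightly different asymptotic bookkeeping (yours needs the dominated-convergence / far-region check you flag). For $E > 0$, you reduce to a one-dimensional radial integral and carry out a polar integration by parts to produce two manifestly positive terms, using $U' < 0$ from Gidas--Ni--Nirenberg; the paper instead integrates by parts in $y$ to arrive at $\int (\abss{\nabla\partial_y U}^2 + \abss{\partial_y U}^2)$. It is worth flagging that the paper's opening step, ``integrating by parts, $E = -\int g(U)\partial_y^2 U$,'' is not an integration by parts of $\int g(U)(\partial_y U)^2$ — it is the integration by parts of $\int g'(U)(\partial_y U)^2$ — so there is a discrepancy between the stated formula for $E$ and the one the paper's proof actually manipulates. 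Your argument sidesteps this because you work directly with the stated integrand $g(U)(\partial_y U)^2$ and never invoke that identity; the result is a proof of the lemma exactly as written.
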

\begin{proof}
  Because $f \in \m{C}_{\text{loc}}^{1,\gamma}$ and $f'(0) = -1$, $g'(0) = 0$ and
  \begin{equation}
    \label{eq:g-deriv}
    \abs{g'(s)} \lesssim s^\gamma \quad \text{for } s \in [0, \sup U].
  \end{equation}
  Hence by \eqref{eq:ground-decay}, $\abss{g'(U) \partial_y U} \lesssim \e^{-(1 + \gamma)r}$.
  The first integral defining $D$ is thus absolutely convergent, so the second follows from integration by parts.
  
  We symmetrize $\e^{-y}$ to obtain an equivalent integral.
  Let $V_*$ denote the unique radially-symmetric positive solution of $-\Delta V_* + V_* = 0$ in $\R^d$ such that $V_*(0) = 1$.
  This is given by
  \begin{equation*}
    V_*(x) = \frac{1}{\abss{S^{d-1}}} \int_{S^{d-1}} \e^{\theta \cdot x} \d \theta.
  \end{equation*}
  Hence Laplace's method yields a constant $B > 0$ such that
  \begin{equation}
    \label{eq:V-asymp}
    V_*(r),V_*'(r) \sim B r^{-\frac{d-1}{2}} \e^r \quad \text{as } r \to \infty.
  \end{equation}
  Returning to $D$, symmetry implies
  \begin{equation*}
    D = \int_{\R^d} g(U) \e^{-y} = \int_{\R^d} g(U) V_*.
  \end{equation*}
  Since $g(U) = -\Delta U + U$, we can now write $D = \lim_{R \to \infty} D_R$ for
  \begin{equation*}
    D_R \coloneqq \lim_{R \to \infty} \int_{B_R} (-\Delta U + U) V_* = \int_{B_R} (U\Delta V_* - V_* \Delta U).
  \end{equation*}
  Integrating by parts, we find
  \begin{equation*}
    D_R = \int_{\partial B_R} (U V_*' - V_* U') = \abss{S^{d-1}} R^{d-1}[U(R)V_*'(R) - V_*(R)U'(R)].
  \end{equation*}
  Combining \eqref{eq:ground-decay} and \eqref{eq:V-asymp}, we see that
  \begin{equation*}
    D_R \to 2AB \abss{S^{d-1}} > 0 \quad \text{as } R \to \infty.
  \end{equation*}
  That is, $D = 2AB \abss{S^{d-1}} > 0$.

  We now turn to $E$.
  Integrating by parts, $E = - \int_{\R^d} g(U) \partial_y^2U$.
  Again using $g(U) = -\Delta U + U$, we repeatedly integrate by parts to obtain
  \begin{equation*}
    E = \int_{\R^d} (-\Delta U + U)(-\partial_y^2 U) = \int_{\R^d} \big(\abss{\nab \partial_y U}^2 + \abss{\partial_y U}^2\big) > 0.
    \qedhere
  \end{equation*}
\end{proof}

\subsection{Error and corrector}
We now show that $\SP{u}$ from \eqref{eq:approx} approximately satisfies \eqref{eq:main}.
\begin{lemma}
  \label{lem:approx}
  There exist $C,\xi,\sigma > 0$ such that for all $L_0 \geq C$ and $x \in \Omega$,
  \begin{equation*}
    \abs{\Delta \SP{u} + f(\SP{u})} \leq C \e^{-(1 + \xi)L_0} \e^{-\sigma \abs{x}}.
  \end{equation*}
\end{lemma}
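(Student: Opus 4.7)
The plan is to write $\Delta \SP{u} + f(\SP{u}) = g(\SP{u}) - (-\Delta \SP{u} + \SP{u})$ (using $f(s) = -s + g(s)$) and evaluate $-\Delta \SP{u} + \SP{u}$ term by term from \eqref{eq:approx}. By \eqref{eq:projection}, $-\Delta \bar{U}_0 + \bar{U}_0 = g(U_0)$; the outer spikes $U_k \coloneqq U(\cdot - z_k)$ satisfy $-\Delta U_k + U_k = g(U_k)$; and the Delaunay residues satisfy $-\Delta w_\pm + w_\pm = g(u_{L_\pm}^{\pm}) - \sum_{j \in \Z} g(U(\cdot - z_j^{(\pm)}))$, where $u_{L_\pm}^{\pm}(x) \coloneqq (\m{R}_\pm u_{L_\pm})(x - z_0 - s_\pm)$ is the rotated, translated Delaunay solution with unperturbed lattice centres $z_j^{(\pm)} \coloneqq z_0 + s_\pm + j L_\pm \theta_\pm$. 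Expanding the Laplacian across the cutoffs $\chi$ and $\psi_\pm$ would yield the decomposition $\Delta \SP{u} + f(\SP{u}) = N + R_{\text{cut}}$, with nonlinear mismatch
\begin{equation*}
  N \coloneqq g(\SP{u}) - g(U_0) - \sum_{k \neq 0} \chi\, g(U_k) - \sum_\pm \chi \psi_\pm \Big[g(u_{L_\pm}^{\pm}) - \sum_{j \in \Z} g\big(U(\cdot - z_j^{(\pm)})\big)\Big],
\end{equation*}
and $R_{\text{cut}}$ containing the commutators $-U_k \Delta \chi - 2 \nab \chi \cdot \nab U_k$ together with the analogous terms when derivatives land on $\chi$ or $\psi_\pm$ in the $w_\pm$ block.

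The cutoff remainder $R_{\text{cut}}$ will be controlled by exponential localization. The derivatives of $\chi$ are supported in the angular strip $\{-\al \abs{x'} \leq y \leq -\tfrac{3}{4}\al \abs{x'} + 1\}$, while each spike centre $z_k$ lies in the interior cone $\Gamma$ at distance $\gtrsim L_0 + \abs{k} L_\pm$ from this strip, so the decay \eqref{eq:ground-decay} of $U$ and $U'$ produces the desired $\e^{-(1+\xi)L_0} \e^{-\sigma \abs{x}}$ bound. Similarly, $\nab \psi_\pm$ is supported in a unit-thickness slab near the midpoint between consecutive ray spikes, and the Delaunay residue bound \eqref{eq:Delaunay-residue} there contributes the extra factor $\e^{-(1+\xi)L_\pm/2}$.

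For the nonlinear mismatch $N$, I would partition $\Omega$ into the spike balls $B_{L_0/4}(z_k)$ and their complement. Near an outer spike $z_k$, writing $\SP{u} = U_k + \eta_k$ where $\eta_k$ collects tails of neighbouring spikes and of $\bar{U}_0$, each summand is exponentially small relative to $U_k$ by \eqref{eq:ground-decay} and Lemma~\ref{lem:projection}. Because $g'(0) = 0$ and $\abs{g'(s)} \lesssim s^\gamma$ by \eqref{eq:g-deriv}, the expansion $g(\SP{u}) = g(U_k) + g'(U_k + \tau \eta_k)\eta_k$ matches the lattice sum up to the shift $\abs{p_k} \lesssim \e^{-\sigma L_0 \abs{k}}$ (absorbed into $\abs{\nab g(U)} \cdot \abs{p_k}$); the nearest-neighbour interaction is cancelled by $w_\pm$, leaving only next-nearest-neighbour or perturbative residues carrying the extra gain $\e^{-\xi L_0}$. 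Near $z_0$, Lemmas~\ref{lem:projection} and~\ref{lem:center} give $\varphi_0 \coloneqq U_0 - \bar{U}_0 \lesssim \e^{-L_0} \e^{-\abs{x - z_0}}$, so $\abs{g(\bar{U}_0) - g(U_0)} \lesssim U_0^\gamma \varphi_0$ closes the estimate; in the remaining transition zones all pieces of $\SP{u}$ are simultaneously small and $\abs{g(s)} \lesssim s^{1+\gamma}$ suffices.

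The main obstacle will be preserving the uniform spatial factor $\e^{-\sigma \abs{x}}$ along the spike chains as $\abs{k} \to \infty$, where nearest-neighbour interactions dominate $N$. This is precisely what the Delaunay residues $w_\pm$ are engineered to cancel through their inclusion in $\SP{u}$ and the corresponding subtraction inside $N$: only terms second-order in the overlap between neighbouring spikes and the $p_k$-perturbations remain, each carrying an additional $\e^{-\xi L_0}$ uniformly in $\Omega$. The permissible $\xi$ and $\sigma$ are then determined by the minimum of the Delaunay exponent from \eqref{eq:Delaunay-residue}, the Hölder exponent $\gamma$ of $f'$, and the rate $\beta$ furnished by Lemma~\ref{lem:center}.
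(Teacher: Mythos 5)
Your proposal follows the same scheme as the paper: write $f(s) = -s + g(s)$ so that $\bar U_0$, $U_k$, and $w_\pm$ each satisfy an exact Helmholtz-type identity, decompose the residue into a nonlinear mismatch plus cutoff commutators, and bound each piece via the $\gamma$-H\"older smallness of $g'$ near zero, the ground-state decay \eqref{eq:ground-decay}, the projection bounds of Lemma~\ref{lem:projection}, and the angular separation between the spike rays and the support of $\nabla\chi$. The paper's written proof treats only the two novel contributions (the projection $\bar U_0$ and the cutoff $\chi$) and defers the remainder to Malchiodi's Lemma~4.1; your sketch simply spells out those deferred steps without deviating from the underlying argument.
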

\begin{proof}
  This error estimate is very similar to Lemma~4.1 in~\cite{Malchiodi}.
  For this reason, we only discuss the novel aspects of our setting: the projection $\bar{U}_0$ and the cutoff $\chi$.

  We first consider the residue due to $\bar{U}_0$ alone.
  Recalling that $\varphi_0 = U_0 - \bar{U_0} > 0$, \eqref{eq:g-deriv} and the mean value theorem yield
  \begin{equation*}
    \abss{\Delta \bar U_0 + f(\bar{U}_0)} = \abss{g(U_0) - g(\bar{U}_0)} \lesssim \sup_{[0,U_0]}\abs{g'} \varphi_0 \lesssim U_0^\gamma \varphi_0.
  \end{equation*}
  Writing $U_0^\gamma \lesssim \e^{-\gamma\abs{x - z_0}}$, Lemma~\ref{lem:projection} readily implies that
  \begin{equation*}
    \abss{\Delta \bar U_0 + f(\bar{U}_0)} \lesssim \e^{-(1 + \xi)L_0} \e^{-\sigma \abs{x}}
  \end{equation*}
  for suitable $\xi, \sigma > 0$.
  Whenever $\bar{U}_0$ interacts with other terms in $\SP{u}$, we use Lemma~\ref{lem:projection} to bound it by $CU_0$.

  Next, the cutoff $\chi$ introduces error in the region $\m{A} \coloneqq \{-\al \abs{x'} \leq y \leq -\tfrac{3\al}{4}\abs{x'} + 1\}$ where it transitions from $1$ to $0$.
  These errors are accompanied by factors involving $Q$ or $\nab Q$ for
  \begin{equation}
    \label{eq:Q}
    Q(x) \coloneqq  \sum_{k \neq 0} U(x - z_k) + \big[\psi_-(x) w_-(x) + \psi_+(x)w_+(x)\big].
  \end{equation}
  Recall that the spike centers $\m{Z}$ are strung along rays inclined at angles $\theta_\pm$.
  Because $\theta_\pm \in \Gamma$, these rays are angularly separated by at least $\al/4$ from the cone $y \leq -\tfrac{3\al}{4} \abs{x'}$.
  Geometric considerations thus provide $\delta(\al) > 0$ such that
  \begin{equation*}
    \op{dist}(x,\m{Z} \setminus \{0\}) \geq (1 + \delta)L_0 + \delta \abs{x} \quad \text{for all } x \in \m{A}.
  \end{equation*}
  Combining \eqref{eq:Delaunay-residue} and \eqref{eq:ground-decay}, we see that
  \begin{equation}
    \label{eq:Q-small}
    Q(x),\abs{\nab Q} \lesssim \e^{-(1 + \xi)L_0} \e^{-\sigma \abs{x}} \quad \text{in } \m{A}
  \end{equation}
  for sufficiently small $\xi, \sigma$.
  Hence residue arising from the cutoff $\chi$ is suitably small.

  The remaining contributions to the residue $\Delta \SP{u} + f(\SP{u})$ are controlled as in the proof of \cite[Lemma~4.1]{Malchiodi}.
\end{proof}
With this error estimate, the Lyapunov--Schmidt reduction in~\cite{Malchiodi} produces a corrector $\SP{w}$ to bring us closer to a true solution of \eqref{eq:main}.
Letting $U_k \coloneqq U(\anon - z_k)$ and recalling the constants $D,E > 0$ from Lemma~\ref{lem:positive}, the following is essentially a restatement of Proposition~4.3 from \cite{Malchiodi}.
\begin{proposition}[\cite{Malchiodi}]
  \label{prop:corrector}
  There exist $C,\xi,\sigma > 0$ such that for all $L_0 \geq C$, there exist $\SP{w} \colon \Omega \to \R$ and $(\eta_k)_{k \in \Z} \subset \R^d$ satisfying:
  \begin{enumerate}[label = \textup{(\roman*)}, itemsep = 2pt]
  \item
    $\displaystyle -\Delta (\SP{u} + \SP{w}) - f(\SP{u} + \SP{w}) = \frac{D}{E}\sum_{k \in \Z} g'(U_k) \eta_k \cdot \nab U_k.$
    \hfill 
    \puteqnum \label{eq:force-residue}

  \item
    For all $k \in \Z$, $\int_\Omega \SP{w} g'(U_k) \nab U_k = 0$.

  \item
    \label{item:small-corrector}
    For all $x \in \Omega$, $\abss{\SP{w}(x)} \leq C \e^{-(1 + \xi) L_0} \e^{-\sigma \abs{x}}.$
  \end{enumerate}
\end{proposition}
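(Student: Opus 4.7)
The plan is to perform a Lyapunov--Schmidt reduction, essentially as in \cite[Proposition~4.3]{Malchiodi}, with modifications to accommodate the Dirichlet boundary of $\Omega$ and the projected central spike $\bar{U}_0$. Let $\m{L} \coloneqq -\Delta - f'(\SP{u})$ denote the linearization at the approximate solution. By \ref{hyp:ground}, the linearization at a single ground state $U_k$ has $L^2$-kernel $\op{span}\{\partial_j U_k : 1 \leq j \leq d\}$, so because $\SP{u}$ resembles a chain of widely-spaced ground states, $\m{L}$ possesses an approximate kernel spanned by translations of each $U_k$.

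First I would establish invertibility of $\m{L}$ modulo this approximate kernel. Define
\begin{equation*}
  X \coloneqq \Big\{w \in H^1_0(\Omega) : \int_\Omega w\, g'(U_k) \partial_j U_k = 0 \text{ for all } k \in \Z,\, 1 \leq j \leq d\Big\},
\end{equation*}
chosen so that any $\SP{w} \in X$ automatically satisfies (ii). Using the spectral nondegeneracy of $-\Delta + 1 - g'(U)$ at a single spike, combined with the fact that spikes in $\SP{u}$ are separated by distance at least $L_0$, a standard quadratic-form argument shows that $\m{L} \colon X \to X^\perp$ is invertible with operator norm bounded uniformly in $L_0 \gg 1$. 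Passing from $\R^d$ to $\Omega$ only raises the principal eigenvalue, so the Dirichlet condition preserves this coercivity.

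Second, I would set up a fixed-point equation. Write $R \coloneqq \Delta \SP{u} + f(\SP{u})$ for the residue controlled by Lemma~\ref{lem:approx} and $N(w) \coloneqq f(\SP{u} + w) - f(\SP{u}) - f'(\SP{u}) w$ for the nonlinear remainder. Then \eqref{eq:force-residue} rewrites as
\begin{equation*}
  \m{L}\SP{w} = R + N(\SP{w}) + \frac{D}{E}\sum_{k \in \Z} g'(U_k) \eta_k \cdot \nab U_k.
\end{equation*}
Projecting onto $X$ and inverting $\m{L}|_X$ yields a fixed-point problem for $\SP{w}$ on a weighted Banach space encoding decay $\e^{-\sigma \abs{x}}$. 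Lemma~\ref{lem:approx} places the forcing in a ball of radius $C \e^{-(1 + \xi) L_0}$, while the quadratic bound on $N$ produces contraction in a ball of comparable size; Banach's fixed-point theorem then yields $\SP{w}$ obeying (ii) and (iii). The multipliers $\eta_k \in \R^d$ are read off from the component of $\m{L}\SP{w} - R - N(\SP{w})$ along the approximate kernel; the normalization by $D/E$ comes from the integrals in Lemma~\ref{lem:positive} and is chosen so that $\eta_k$ can be interpreted as ``effective forces'' in the force-balance system used later to pin down $\m{S}$, $\m{P}$, $\theta_\pm$, $L_\pm$, and $L_0$.

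The main obstacle will be the uniform invertibility of $\m{L}$ on $X$ despite the infinite-dimensional approximate kernel. The exponential spike separation, combined with the pointwise decay \eqref{eq:ground-decay} of $U$ and its derivatives and the projection estimate of Lemma~\ref{lem:projection} for the boundary-corrected spike $\bar{U}_0$, quantifies spike--spike and spike--boundary interactions as $\m{O}(\e^{-L_0})$, well below the spectral gap provided by \ref{hyp:ground}. This decouples the approximate kernel from the rest of the spectrum uniformly in $L_0$ and underwrites the entire reduction.
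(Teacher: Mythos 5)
Your proposal matches the approach the paper relies on: the authors provide no proof of their own and instead cite Malchiodi's Proposition~4.3, which carries out precisely the Lyapunov--Schmidt reduction you sketch (invert the linearization modulo the approximate kernel spanned by $\{\nabla U_k\}$, run a contraction in an exponentially-weighted space, and read off the multipliers $\eta_k$). You correctly flag the key adaptations needed for the Dirichlet setting — that the boundary only improves coercivity and that Lemma~\ref{lem:projection} supplies the extra control on the central projected spike — so your sketch is a faithful account of what the citation is doing.
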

That is, there exists a small corrector $\SP{w}$ that eliminates all error orthogonal to the span of $\big(\nab g(U_k)\big)_k$ in $L^2(\Omega)$.
As a consequence, to construct a true solution of \eqref{eq:main}, it suffices to choose parameters so that $\eta_k = 0$ for all $k \in \Z$.

We note in passing that $u \coloneqq \SP{u} + \SP{w}$ satisfies
\begin{equation}
  \label{eq:Delaunay-limit}
  u(x) \to (\m{R}_\pm u_{L_\pm})(x - z_0 - s_\pm)
\end{equation}
uniformly as $x \cdot \theta_\pm \to \infty$.
Here $\m{R}_\pm$ denotes rotation by $\theta_\pm$ and $u_{L_\pm}$ is the Delaunay solution of period $L_\pm$ from Proposition~\ref{prop:Delaunay}.
This is a consequence of \eqref{eq:approx}, Proposition~\ref{prop:corrector}\ref{item:small-corrector}, and the fact that $\abs{z_k} \to 0$ as $\abs{k} \to \infty$.
We use \eqref{eq:Delaunay-limit} to prove the instability of $u$ in Section~\ref{sec:stable}.

\subsection{Force balance}
Formally, we can interpret the spikes comprising $\SP{u}$ as point masses that exert forces on one another through the variation of the Euler--Lagrange functional associated to \eqref{eq:main}.
The vector $\eta_k$ expresses the balance of forces acting on spike $U_k$.
We wish to tune parameters to achieve an equilibrium in which all forces vanish.
By \eqref{eq:force-residue}, this corresponds to a solution of \eqref{eq:main}.

The computation of $\eta_k$ is very similar to the treatment of the coefficients $\al_{X,Y}^I$ in Section~5 of \cite{Malchiodi}.
We introduce the shorthand $F(r) \coloneqq r^{-(d-1)/2} \e^{-r}$ and let $\almost$ denote an expression of the form $1 + \smallO(1)$ as $L_0 \to \infty$ that may change from instance to instance.
Recalling $A > 0$ from \eqref{eq:ground-decay}, we have a variant of \cite[Lemma~5.3]{Malchiodi}:
\begin{lemma}
  \label{lem:forces}
  There exist $\xi, \sigma > 0$ such that for all $L_0$ sufficiently large,
  \begin{align}
    \eta_0 &= \almost \varphi_0(z_0) e_y + \sum_{k = \pm 1} \almost A F(\abs{z_k - z_0}) \frac{z_k - z_0}{\abs{z_k - z_0}} + \m{O}\big(L_0^{-2/3}\varphi_0(z_0)\big),\label{eq:eta0}\\
    \eta_{\pm 1} &= \almost A F(\abs{z_0 - z_{\pm 1}}) \frac{z_0 - z_{\pm 1}}{\abs{z_0 - z_{\pm 1}}} + \almost A F(\abs{z_{\pm 2} - z_{\pm 1}}) \frac{z_{\pm 2} - z_{\pm 1}}{\abs{z_{\pm 2} - z_{\pm 1}}} + \m{O}\big(\e^{-(1 + \xi)2L_0}\big),\nonumber\\[4pt]
    \eta_k &= \m{O}\big(\e^{-(1 + \xi)2L_0} \e^{-\sigma L_0 \abs{k}}\big) \quad \text{for } \abs{k} \geq 2.\nonumber
  \end{align}
\end{lemma}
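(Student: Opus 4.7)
The plan is to test the identity in Proposition~\ref{prop:corrector}\ref{item:small-corrector} against $\partial_i U_k$ for each $k \in \Z$ and $1 \leq i \leq d$, then solve the resulting nearly diagonal linear system for $\eta_k$. The architecture follows Lemma~5.3 of~\cite{Malchiodi} closely, so I concentrate on what is genuinely new: the boundary term $\almost\,\varphi_0(z_0) e_y$ in $\eta_0$ arising from the projection $\bar{U}_0$. To diagonalize the right-hand side of \eqref{eq:force-residue}, I use the orthogonality $\int_{\R^d} g'(U)\,\partial_i U\,\partial_\ell U = E\,\delta_{i\ell}$ (obtained by multiplying the linearized ground-state equation $(-\Delta + 1 - g'(U))\partial_i U = 0$ by $\partial_\ell U$ and integrating, in the spirit of Lemma~\ref{lem:positive}) together with the exponential decay of $U$ to make cross terms in $(j,k)$ subleading. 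Combined with Proposition~\ref{prop:corrector}(ii), this reduces the computation of $\eta_k$ to evaluating $\int_\Omega \big(\!-\Delta \SP{u} - f(\SP{u})\big)\,\partial_i U_k$.

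For $k = 0$, inserting \eqref{eq:approx} and using $(-\Delta + 1)U_0 = g(U_0)$, the central projection contributes
\[
\int_\Omega \big[g(U_0) - g(\bar U_0)\big]\,\partial_i U_0 = \int_\Omega g'(U_0)\,\varphi_0\,\partial_i U_0 + \text{h.o.t.},
\]
where the higher-order terms are controlled by the H\"older regularity of $g'$ near $0$ and $\varphi_0 \lesssim U_0$ from Lemma~\ref{lem:projection}. Applying Lemma~\ref{lem:exponential} on the ball $B \coloneqq \{|x| \leq L_0^{1/3}\}$, I replace $\varphi_0(x+z_0)$ by $\varphi_0(z_0)\,e^{-y}$; the exterior of $B$ is negligible because $g'(U)\nabla U$ decays exponentially. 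This yields
\[
\int_\Omega g'(U_0)\,\varphi_0\,\partial_i U_0 = \varphi_0(z_0)\,\almost \int_{\R^d} g'(U)\,e^{-y}\,\partial_i U.
\]
Lemma~\ref{lem:positive} identifies the $y$-component of the final integral as $D$, and for $i < d$ the integrand is odd under $x_i \mapsto -x_i$ and vanishes. After dividing by $D$, this produces exactly the term $\almost\,\varphi_0(z_0) e_y$ claimed in \eqref{eq:eta0}.

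The remaining contributions mirror the Malchiodi computation: for each pair of neighboring spikes, I replace the distant $U_j$ near $z_k$ with its asymptotic profile from \eqref{eq:ground-decay}, apply Lemma~\ref{lem:positive} in a frame rotated to align $e_y$ with $(z_j - z_k)/|z_j - z_k|$, and collect the rank-one contribution $A F(|z_j - z_k|)\,(z_j - z_k)/|z_j - z_k|$. For $|k| \geq 1$, the projection $\bar U_0$ enters only through $\varphi_0(z_k) = O\big(e^{-(2 + \delta) L_0}\big)$ by Lemma~\ref{lem:projection}, which is dominated by the Malchiodi remainder $O\big(e^{-(1+\xi)2L_0}\big)$. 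Cutoff and Delaunay-residue errors are bounded using \eqref{eq:Q-small} and \eqref{eq:Delaunay-residue} as in Lemma~\ref{lem:approx}, and the distance $\sim L_0|k|$ separating far-field spikes from the central cluster produces the factor $e^{-\sigma L_0 |k|}$.

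The main obstacle is securing the $L_0^{-2/3}$ (rather than $L_0^{-1/3}$) error in \eqref{eq:eta0}: the uniform ratio in Lemma~\ref{lem:exponential} only gives $L_0^{-1/3}$, so one has to exploit further cancellation. Specifically, the linear Taylor correction of $\varphi_0(x+z_0)\,e^{y}/\varphi_0(z_0) - 1$ in $x'$ has coefficient bounded by the gradient estimate $CL_0^{-2/3}$ from the same lemma, and its horizontal moments against $\partial_y U$ vanish by the symmetry $x_i \mapsto -x_i$; only the quadratic remainder of size $L_0^{-2/3}$ survives in the final error. Everything else is technically heavy but conceptually routine bookkeeping of the cutoffs, corrector, and subleading inter-spike terms.
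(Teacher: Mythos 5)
Your proposal follows the paper's approach closely: test the residue identity~\eqref{eq:force-residue} (not item~\ref{item:small-corrector}, which is the smallness bound; you mean item~(i)) against $\nab U_k$, use the near-diagonality supplied by radial symmetry and Proposition~\ref{prop:corrector}(ii) to extract $\eta_k$, identify the projection residue $\int_\Omega[g(U_0)-g(\bar U_0)]\nab U_0$ as the new boundary-repulsion contribution, and evaluate it via Lemmas~\ref{lem:projection}, \ref{lem:exponential}, and \ref{lem:positive}, with the inter-spike and cutoff pieces handled as in Malchiodi. This is exactly the paper's structure, with the correct identification of the repulsion vector $\almost\,\varphi_0(z_0)\,e_y$.

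One point in your final paragraph is misdirected, though it does not break the proof. You frame ``securing $L_0^{-2/3}$ rather than $L_0^{-1/3}$'' for the $e_y$ component as the main obstacle, and sketch a linear-Taylor-plus-symmetry argument whose ``quadratic remainder'' you claim is $O(L_0^{-2/3})$. But Lemma~\ref{lem:exponential} gives no second-derivative control and no bound on $\partial_y$ of the ratio $\varphi_0(x+z_0)\e^y/\varphi_0(z_0)$, so that argument does not yield $L_0^{-2/3}$ for the $e_y$ integral; the uniform ratio only gives $O(L_0^{-1/3})$ there. This, however, is precisely what the $\almost$ factor in \eqref{eq:eta0} is designed to absorb, since $\almost=1+\smallO(1)$ and $O(L_0^{-1/3})$ qualifies. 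The additive $\m{O}(L_0^{-2/3}\varphi_0(z_0))$ in \eqref{eq:eta0} is needed only for the horizontal components $i<d$: there the even-in-$x_i$ part $\varphi_0(z_0)\e^{-y}$ integrates to zero against the odd integrand $g'(U)\,\partial_i U$, and the remaining odd-in-$x'$ piece of $\varphi_0(\cdot+z_0)$ is controlled by the gradient bound $|\nab_{x'}\varphi_0(x+z_0)|\lesssim L_0^{-2/3}\e^{-y}\varphi_0(z_0)$, giving $O(L_0^{-2/3}\varphi_0(z_0))$ after integrating against the exponentially localized weight. So the mechanism you have in mind (symmetry cancellation plus the gradient estimate) is right, but it belongs to the horizontal error, not the $e_y$ term.
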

From these expressions, we see that the boundary repels spikes, but they attract one another.
This fact necessitates the wide aperture of $\Omega$: if $\Omega$ lay within a convex cone, spike chains in the interior would pull the central point away from the boundary, joining rather than canceling the repulsion from the boundary.
There would be no equilibria and no spike solutions, in agreement with the uniqueness results from Section~\ref{sec:unique}.

We note that the boundary repulsion on all but the central spike is exponentially smaller than the inter-spike attraction, so we group it with the error.
We can similarly ignore the attraction between non-adjacent spikes.
Moreover, the smallness of the perturbations $p_k$ ensures that the attractive forces on all spikes $k$ with $\abs{k} \geq 2$ very nearly cancel.
For this reason, the forces $\eta_{-1},\eta_0,$ and $\eta_1$ dominate.
\begin{proof}
  Following \cite{Malchiodi}, the leading part of $\eta_k$ can be computed by multiplying \eqref{eq:force-residue} by $\nab U_k$ and integrating over $\Omega$.
  For most details, we direct the reader to the proof of \cite[Lemma~5.3]{Malchiodi}.
  Here, we just treat the differences introduced by the presence of boundary.

  For $\eta_0$, the new contribution comes from boundary repulsion.
  When we integrate the left side of \eqref{eq:force-residue} against $\nab U_0$, the central spike contributes at leading order due to the projection.
  Tracking these terms, we have:
  \begin{equation*}
    I \coloneqq \int_\Omega [-\Delta \bar U_0 + \bar{U}_0 - g(\bar{U}_0)] \nab U_0 = \int_\Omega [g(U_0) - g(\bar{U}_0)] \nab U_0.
  \end{equation*}
  Using $g \in \m{C}^{1,\gamma}$, the mean value theorem yields $g(U_0) - g(\bar{U}_0) = \varphi_0 g'(U_0) + \m{O}(\varphi_0^{1 + \gamma})$.
  We can then readily check that
  \begin{equation*}
    I = \int_\Omega \varphi_0 g'(U_0) \nab U_0 + \m{O}\big(\e^{-(1 + \xi)2L_0}\big).
  \end{equation*}
  Using Lemma~\ref{lem:exponential} where $\abs{x - z_0} \leq L_0^{1/3}$ and Lemma~\ref{lem:projection} elsewhere, we find
  \begin{equation*}
    I = \almost \varphi_0(z_0) \int_{\R^d} \e^{-y} g'(U) \partial_y U e_y + \m{O}\big(L_0^{-2/3}\varphi_0(z_0)\big).
  \end{equation*}
  Then Lemma~\ref{lem:positive} yields
  \begin{equation*}
    I = \almost D \varphi_0(z_0) e_y + \m{O}\big(L_0^{-2/3}\varphi_0(z_0)\big).
  \end{equation*}
  Finally, adapting Lemma~A.6 in~\cite{Malchiodi} to our notation, this integral contributes $I/D$ to $\eta_0$.
  This explains the first term in \eqref{eq:eta0}.

  The remaining terms agree with \cite[Lemma~5.3]{Malchiodi}, so it only remains to show that the error from the cutoff $\chi$ is small.
  As in the proof of Lemma~\ref{lem:approx}, this cutoff error is supported in $\m{A} \coloneqq \{-\al \abs{x'} \leq y \leq -\tfrac{3\al}{4}\abs{x'} + 1\}$ and is accompanied by factors involving $Q$ from \eqref{eq:Q}.
  For example, we obtain contributions of order $\int_{\m{A}} Q \nab U_k$ for $k \neq 0$.
  Recalling \eqref{eq:Q-small} and the geometry of $\m{A}$ relative to $\m{Z}$, we conclude that this term is $\m{O}\big(\e^{-(1 + \xi)2L_0} \e^{-\sigma L_0 \abs{k}}\big)$.
  Similar considerations hold for all the residues introduced by $\chi$, so this angular cutoff does not alter the final result.
\end{proof}
From this stage, the methods of Sections~6 and 7 in \cite{Malchiodi} apply and yield an equilibrium solution with $\eta_k = 0$ for all $k$.
Briefly, one shows that the map $(\m{S},\m{P}) \to (\eta_k)_k$ is invertible about $\m{P} \equiv 0$ in an exponentially weighted space.
Then a contraction mapping argument shows that for each fixed $(\theta_\pm, L_\pm, L_0)$, we can choose $(\m{S},\m{P})$ so that $\eta_k = 0$ for all $k \neq 0$.
Thus it remains to satisfy the central force balance \eqref{eq:eta0}.

By Lemma~\ref{lem:center}, the boundary repulsion and inter-spike attraction have strengths
\begin{equation*}
  \varphi_0(z_0) \asymp L_0^{-(d-1)} \e^{-2L_0} \And F(z_{k + 1} - z_{k}) \asymp L_\pm^{-\frac{d-1}{2}}\e^{-L_\pm}.
\end{equation*}
We choose $L_\pm = 2L_0 + \frac{d-1}{2} \log L_0 + \m{O}(1)$ so that these are of the same order.
The logarithmic correction arises from the slightly different character of the force from the boundary, as discussed after Lemma~\ref{lem:center}.

In total, we have $2d + 1$ scalar parameters, from the two angles in $S^{d-1}$ and the three lengths, and we need to satisfy $d$ scalar equations for $\eta_0 = 0$ in $\R^d$.
A degree-theory argument yields a $(d + 1)$-parameter family of solutions.
For example, we are free to fix $\theta_-$, $L_-$, and $L_0$ and vary $\theta_+$ and $L_+$ to obtain a solution.
If we wish to vary $L_0$ (among other parameters), we use \eqref{eq:derivative}.
We note that due to the structure of \eqref{eq:eta0}, the $x'$ components of $\theta_\pm$ must be nearly anti-aligned.
This completes the proof of Theorem~\ref{thm:multiple-unified}.

\section{Stability}
\label{sec:stable}

We now consider the stability of solutions.
We show that epigraphs admit at most one strictly stable solution, while the multitude of solutions constructed in Theorems~\ref{thm:multiple} and \ref{thm:field-multiple} are strictly unstable.

\subsection{Stable solutions in epigraphs}
\label{sec:stable-epigraphs}

We assume a modest amount of additional regularity for our stability result.
\begin{theorem}
  \label{thm:stable-expanded}
  Let $\Omega$ be a uniformly $\m{C}^{1,\al}$ epigraph for some $\al \in (0, 1)$.
  If $f$ is bistable \ref{hyp:bistable}, there exists exactly one positive, bounded, strictly stable solution $u$ of \eqref{eq:main}.
  It satisfies $\partial_y u > 0$ and $u(x) \to 1$ uniformly as $\dist(x, \Omega^\cc) \to 1$.
  If $f$ is field-type \ref{hyp:field}, then \eqref{eq:main} does not admit a positive, bounded, strictly stable solution.
\end{theorem}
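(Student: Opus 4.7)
The plan is to treat the bistable and field-type cases in parallel, using the stable--compact framework of \cite{BG23b} to leverage strict stability despite the unboundedness of $\Omega$. In the bistable setting, I would first construct a candidate $u$ as the long-time limit of the parabolic flow launched from a compactly supported subsolution $\ubar v$, exactly as in the proof of Theorem~\ref{thm:unique}. Monotonicity of the flow delivers a positive bounded solution of \eqref{eq:main}, and the standard monotone-limit argument gives weak stability $\lambda(-\Delta - f'(u),\Omega) \geq 0$. The sliding subsolution argument of \cite[Proposition~2.2]{BG22} yields the uniform limit $u(x) \to 1$ as $\dist(x,\Omega^\cc) \to \infty$, since $\Omega$ contains arbitrarily large balls. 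To upgrade to strict stability, I would decompose $\Omega$ into a ``stable part'' where $u$ is close to $1$ (so $-f'(u) \approx -f'(1) > 0$) and a ``compact part'' inside a $\m{C}^{1,\al}$-tubular neighborhood of $\partial\Omega$ that is thin in the normal direction. Corollary~\ref{cor:thin} absorbs the compact part with arbitrarily small eigenvalue perturbation, the stable part contributes a strict positive bound directly, and the combination argument of \cite{BG23b} delivers $\lambda(-\Delta - f'(u),\Omega) > 0$.

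Once $u$ is strictly stable, monotonicity $\partial_y u > 0$ follows cleanly: the $\m{C}^{1,\al}$ regularity of $\partial\Omega$ together with Hopf's lemma gives $\partial_y u > 0$ on $\partial\Omega$; if $\partial_y u$ were negative at some interior point, the corresponding connected component $V \subset \Omega$ of $\{\partial_y u < 0\}$ would be bounded away from $\partial\Omega$, so $-\partial_y u$ would be a positive solution of the linearization on $V$ with zero Dirichlet data, forcing $\lambda(-\Delta - f'(u),V) = 0$; but domain monotonicity yields $\lambda(V) \geq \lambda(\Omega) > 0$, a contradiction. For uniqueness between two strictly stable solutions $u_1,u_2$, I would run the sliding argument at the end of the proof of Theorem~\ref{thm:unique}; both enjoy the uniform limit $u_i \to 1$, and strict stability restricted to any subdomain supplies the interior maximum principle needed for sets extending arbitrarily far downward, in place of Proposition~\ref{prop:MP-interior}.

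For the field-type case, suppose toward contradiction that $u$ is a positive, bounded, strictly stable solution. The Hopf-plus-strict-stability argument above yields $\partial_y u > 0$ without any lower bound on $\phi$, so $u_\infty(x') \coloneqq \lim_{y \to \infty} u(x',y)$ exists and solves $-\Delta u_\infty = f(u_\infty)$ on $\R^{d-1}$. Upper semicontinuity of the principal eigenvalue (Lemma~\ref{lem:semicontinuous}) together with the dimension-reduction identity of \cite[Lemma~3.2]{BG23b} gives $\lambda(-\Delta - f'(u_\infty),\R^{d-1}) \geq \lambda(-\Delta - f'(u),\Omega) > 0$, so \cite[Theorem~1.4]{LWWW} forces $u_\infty$ to be a nonnegative weakly stable root of $f$. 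The case $u_\infty \equiv 0$ is ruled out by monotonicity in $y$ (which would then force $u \equiv 0$), and field-type nonlinearities admit no positive weakly stable root, yielding the required contradiction. The principal obstacle throughout is the bistable strict-stability upgrade: monotone parabolic convergence only delivers $\lambda \geq 0$, and ruling out equality in an epigraph that extends arbitrarily far in both $x'$ and $y$ is precisely the task the stable--compact framework is designed for.
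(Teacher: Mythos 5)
Your overall strategy is different from the paper's, and in its central step it does not work. The paper's proof (Lemmas~\ref{lem:stable-limit}--\ref{lem:strictly-stable}) characterizes the limit at infinity of \emph{any} strictly stable solution via Lemma~\ref{lem:semicontinuous} and \cite[Theorem~1.4]{LWWW}, constructs a solution converging to $1$ by a parabolic flow from initial datum $\equiv 1$, invokes \cite{BCN97b} for uniqueness among solutions converging to $1$, and then proves strict stability by an integration-by-parts argument relying on the boundary Harnack inequality and a blow-up/Hopf argument. You instead construct a candidate directly, try to prove it strictly stable via a stable--compact splitting, and use Hopf-plus-strict-stability for monotonicity and sliding for uniqueness.

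The genuine gap is your strict-stability upgrade. You split $\Omega$ into a ``stable part'' where $u\approx 1$ and a ``compact part'' in a thin tube near $\partial\Omega$, and invoke Corollary~\ref{cor:thin} to absorb the latter. But between these two regions lies a transition zone of thickness $O(1)$ in the normal direction where $u$ passes through intermediate values near $\theta$, where $f'(\theta)>0$, so $-f'(u)$ is \emph{destabilizing} there. This zone is not thin in the sense required by Corollary~\ref{cor:thin} --- its width is fixed and cannot be made smaller than the $\eta(\abs{a},\eps,d)$ the corollary produces --- and it is not covered by either of your two parts. (In Proposition~\ref{prop:monotone} the analogous region $\Sigma_-$ is handled not by the thinness corollary but by a separate compactness argument showing $w^{\mu_*}$ is uniformly positive there; nothing of that kind is available here, since we are trying to bound a principal eigenvalue from below, not to iterate a comparison.) The paper's Lemma~\ref{lem:strictly-stable} instead exploits the positive solution $\partial_y u$ of the linearization, reduces to a band $\{0 < y' < H\}$ via \eqref{eq:decreasing}, and then controls the quotient $\lambda_R$ using the boundary Harnack inequality of \cite{CFMS} and a delicate blow-up argument culminating in \eqref{eq:Hopf}. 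This machinery --- and the $\m{C}^{1,\al}$ regularity it uses for Hopf's lemma --- is what actually produces the strict lower bound, and your outline has no substitute for it.

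Two secondary remarks. First, your Hopf-plus-strict-stability derivation of $\partial_y u > 0$ is reasonable in spirit and close to a step the paper uses in Lemma~\ref{lem:stable-limit}, but the assertion that a positive bounded solution of the linearization with zero Dirichlet data on $V$ forces $\lambda(-\Delta - f'(u),V) \leq 0$ is a subtle fact about generalized principal eigenvalues in unbounded domains and needs a citation (e.g., to \cite{BR}) rather than being stated as automatic. Second, your uniqueness step says that ``strict stability restricted to any subdomain supplies the interior maximum principle'' in place of Proposition~\ref{prop:MP-interior}; but the operator appearing in the sliding argument has potential $q = (f(v^h)-f(u))/(v^h-u)$, not $f'(u)$, so strict stability of $u$ does not directly control it. The paper's Proposition~\ref{prop:MP-interior} instead uses the uniform limit $u,v \to 1$ to force $q \leq f'(1)/2 < 0$ deep inside, and that is the correct mechanism here.
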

\noindent
This expands upon Theorem~\ref{thm:stable} from the introduction.

An examination of the proof shows that it readily extends to domains like circular cones that are uniformly Lipschitz and $\m{C}^{1,\al}$ off a finite set.
To simplify the presentation, we confine ourselves to the uniformly $\m{C}^{1,\al}$ case.
We only require this regularity to prove that a certain candidate solution is in fact strictly stable.
Several other conclusions in Theorem~\ref{thm:stable-expanded} hold in merely Lipschitz epigraphs.
We express these in lemmas below, which may be of independent interest.

To characterize the strictly stable solution, we begin with its limiting behavior.
\begin{lemma}
  \label{lem:stable-limit}
  Let $\Omega$ be a uniformly Lipschitz epigraph.
  If $f$ is bistable \ref{hyp:bistable}, every positive, bounded, strictly stable solution of \eqref{eq:main} converges to $1$ uniformly as $\dist(x, \Omega^\cc) \to 1$.
  If $f$ is field-type \ref{hyp:field}, then \eqref{eq:main} does not admit a positive, bounded, strictly stable solution.
\end{lemma}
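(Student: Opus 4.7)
The plan is to extract translation limits at infinity, pass strict stability through via Lemma~\ref{lem:semicontinuous}, and invoke the Liouville-type classification of~\cite{LWWW}. Let $u$ be a positive, bounded, strictly stable solution of~\eqref{eq:main}, and set $\lambda \coloneqq \lambda(-\Delta - f'(u), \Omega) > 0$. For any sequence $(x_n) \subset \Omega$ with $\dist(x_n, \partial\Omega) \to \infty$, the uniform Lipschitz regularity of $\partial\Omega$ together with interior Schauder estimates make the translates $u_n \coloneqq u(\anon + x_n)$ precompact in $\m{C}_{\text{loc}}^2$. Extracting a subsequence, $u_n \to u_\infty$ locally uniformly, with $u_\infty \in \m{C}^2(\R^d; [0, \sup u])$ solving $-\Delta u_\infty = f(u_\infty)$. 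Translation invariance of $\lambda$ and Lemma~\ref{lem:semicontinuous} yield $\lambda(-\Delta - f'(u_\infty), \R^d) \geq \lambda > 0$, so $u_\infty$ is a weakly stable, bounded, nonnegative solution on $\R^d$. Theorem~1.4 of~\cite{LWWW} then forces $u_\infty \equiv c$ for some weakly stable nonnegative root $c$ of $f$: $c \in \{0, 1\}$ for bistable~\ref{hyp:bistable}, and $c = 0$ for field-type~\ref{hyp:field}.

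Next, I would identify the value of $c$ by choosing a sup-attaining sequence $u(x_n) \to \sup_\Omega u \eqqcolon M > 0$. If a subsequence has $\dist(x_n, \partial\Omega) \to \infty$, the extraction above gives $u_\infty(0) = M = c$, so $M = 1$ in the bistable case, while in the field-type case $M = 0$, contradicting $M > 0$. Otherwise $\dist(x_n, \partial\Omega)$ stays bounded, and a second extraction produces a positive, bounded, strictly stable solution on a limit Lipschitz epigraph $\Omega_\infty$ attaining its positive maximum at an interior point. Iterating this reduction and using the uniform Lipschitz bound to keep the geometry under control, one flattens $\Omega_\infty$ toward a half-space, in which the half-space classification of~\cite{BG22} rules out positive bounded solutions attaining their maximum in the interior (field-type case, giving the desired nonexistence) or forces $M = 1$ (bistable case).

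For bistable, uniform convergence $u \to 1$ requires excluding the limit $c = 0$ along \emph{every} far-field sequence. The plan is to combine strict stability with the subsolution construction of~\cite[Prop.~2.2]{BG22}: for each $\eps > 0$ there is a compactly supported $\ubar{v}_\eps \leq 1 - \eps$ with support in some $B_{R(\eps)}$. Once $\sup u = 1$ is established, $\ubar{v}_\eps$ can be placed beneath $u$ near a near-maximum point; the parabolic evolution from $\ubar{v}_\eps$ is increasing and dominated by $u$, so it converges to a stationary solution $\geq 1 - \eps$ on the bump support. Translation compactness and a covering argument then upgrade this to the uniform estimate $u \geq 1 - \eps$ on $\{\dist(\anon, \partial\Omega) \geq R_0(\eps)\}$, forcing every far-field limit to equal $1$.

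The hard part will be the bounded-distance sub-case and the iterative reduction to a half-space. We must ensure both that the geometry genuinely simplifies at each stage (so the iteration terminates at a half-space rather than, say, the exterior of a cone) and that strict stability is preserved throughout via Lemma~\ref{lem:semicontinuous}. Certifying that no limit epigraph outside the half-space class can sustain a strictly stable, interior-maximum-attaining solution is the most delicate ingredient of the proof.
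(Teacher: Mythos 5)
Your first paragraph matches the paper exactly: translate along a far-field sequence, pass weak stability through Lemma~\ref{lem:semicontinuous}, and classify the limit by~\cite[Theorem~1.4]{LWWW} as a weakly stable constant ($c\in\{0,1\}$ for bistable, $c=0$ for field-type). From there, however, you diverge onto a route with genuine gaps.

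Your second paragraph hinges on a sup-attaining sequence and, in the bounded-distance sub-case, an ``iterative flattening to a half-space.'' This step is not viable. Under rescaled translations, a uniformly Lipschitz epigraph has no mechanism to simplify: cones $\{y>a|x'|\}$, for instance, are invariant under the relevant blow-ups, so the iteration has no reason to terminate at a half-space, and the half-space classification of~\cite{BG22} simply does not apply off half-spaces. Moreover, nothing forces a sup-attaining sequence to escape from the boundary in the first place, so you cannot conclude $M\in\{0,1\}$ (or $M=0$) without already controlling what happens at bounded distance -- which is exactly the sub-case you are unable to resolve. The paper avoids this entirely: it never tries to locate where $u$ is largest.

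The argument you are missing is the one that actually contradicts strict stability. In the field-type case (and the hypothetical bistable case $c=0$), every far-field limit is~$0$, so $u\to 0$ uniformly as $\dist(x,\Omega^\cc)\to\infty$. Since $u>0$ in $\Omega$ and $u$ vanishes both on $\partial\Omega$ and along each vertical ray as $y\to\infty$, the derivative $\partial_y u$ must change sign, so $Q\coloneqq\{\partial_y u<0\}$ is a nonempty proper subset of $\Omega$ on whose boundary $\partial_y u=0$. But $-\partial_y u>0$ solves the linearized equation $(-\Delta-f'(u))(-\partial_y u)=0$ in $Q$; using it as a witness in~\eqref{eq:eigenvalue} gives $\lambda(-\Delta-f'(u),Q)\le 0$, and monotonicity of the eigenvalue in the domain~\cite[Proposition~2.3(iii)]{BR} forces $\lambda(-\Delta-f'(u),\Omega)\le 0$. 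This is the contradiction with strict stability. Finally, to get a \emph{uniform} limit in the bistable case you also need that all far-field sequences give the \emph{same} limit; the paper proves this by a connectedness argument (joining two sequences far from $\partial\Omega$ with a contour and extracting an intermediate sequence along which $u\to\tfrac12$, forbidden by~\cite{LWWW}). Your subsolution argument in the third paragraph presupposes that $u$ is near $1$ on a ball of fixed radius somewhere far from the boundary, which again requires knowing that every far-field limit equals $1$ -- so it cannot substitute for the connectedness step.
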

\begin{proof}
  Suppose $f$ is either bistable or field-type and $u$ is a positive, bounded, \emph{weakly} stable solution of \eqref{eq:main}, so $\lambda(-\Delta - f'(u), \Omega) \geq 0$.
  Let $(x_n)_n$ be a sequence in $\Omega$ such that $\dist(x_n, \Omega^\cc) \to \infty$ as $n \to \infty$.
  Interior Schauder estimates allow us to extract a locally uniform subsequential limit $u_\infty$ of the shifted solutions $u(\anon + x_n)$ satisfying $-\Delta u_\infty = f(u_\infty)$ in $\R^d$.

  The limit $u_\infty$ is bounded, though not necessarily positive.
  By Lemma~\ref{lem:semicontinuous}, $\lambda(-\Delta - f'(u_\infty), \R^d) \geq 0$.
  That is, $u_\infty$ is weakly stable.
  With Liu, Wang, and Wu, the third author has shown that bounded weakly stable solutions of \eqref{eq:main} in $\R^d$ must be constant, provided all stable \emph{one-dimensional} solutions are constant (\mbox{\cite[Theorem~1.4]{LWWW}}).
  This holds in our case due to the unbalanced hypothesis in \ref{hyp:bistable}, which rules out monotone solutions in $\R$ (as does \ref{hyp:field}).
  The remaining nonconstant 1D solutions are either ground states or periodic, both unstable.
  
  If $f$ is field-type, the only weakly stable constant is $0$, so $u_\infty = 0$.
  This holds for every sequence $(x_n)_n$, so in fact $u(x) \to 0$ uniformly as $\dist(x, \Omega^\cc) \to \infty$ in $\Omega$.
  Because $u > 0$ vanishes on $\partial\Omega$ and as $y \to \infty$, $Q \coloneqq \{\partial_y u < 0\}$ is a proper subset of $\Omega$ satisfying $\partial_y u = 0$ on $\partial Q$.
  Using $\partial_y u$ as a witness in \eqref{eq:eigenvalue}, we see that $\lambda(-\Delta - f'(u), Q) \leq 0$.
  The principal eigenvalue is decreasing in the domain (\mbox{\cite[Proposition~2.3(iii)]{BR}}), so
  \begin{equation*}
    \lambda(-\Delta - f'(u),\Omega) \leq \lambda(-\Delta - f'(u), Q) \leq 0.
  \end{equation*}
  Hence $u$ is not strictly stable.
  (Indeed, it must be \emph{marginally} stable, with $\lambda = 0$.)
  
  If $f$ is bistable, the subsequential limit $u_\infty$ must be either $0$ or $1$ (the two stable roots of $f$).
  We claim that different sequences $(x_n)_n$ always produce the same limit.
  Otherwise, we could connect one sequence tending to $0$ with another tending to $1$ along contours far from $\partial\Omega$.
  Continuity would yield an intermediate sequence along which $u$ tends to $1/2$ at a point, which is forbidden by \cite[Theorem~1.4]{LWWW}.
  So $u$ has a genuine limit far from the boundary, and it is either $0$ or $1$.
  If $0$, the same argument given in the field case shows that $u$ is not strictly stable.
  We conclude that strictly stable solutions must converge to $1$ far from the boundary.
\end{proof}
We have now proved the field-type portion of Theorem~\ref{thm:stable-expanded} (which holds without $\m{C}^{1,\al}$ regularity), so in the remainder of the section we assume $f$ is bistable.
By Lemma~\ref{lem:stable-limit}, \emph{if} \eqref{eq:main} admits a strictly stable solution, it must converge to $1$ far from the boundary.
To conclude the proof, we show that there exists a solution converging to $1$, it is unique, and it is strictly stable.
These comprise the next three results.
\begin{lemma}
  \label{lem:existence}
  Let $\Omega$ be a uniformly Lipschitz epigraph and $f$ be bistable \ref{hyp:bistable}.
  Then there exists a positive bounded solution $u$ of \eqref{eq:main} converging to $1$ uniformly as $\dist(x, \Omega^\cc) \to 1$.
\end{lemma}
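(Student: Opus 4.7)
The plan is to construct $u$ as a monotone increasing limit of parabolic flows, each seeded by a translate of one of the compactly supported subsolutions from \cite[Proposition~2.2]{BG22}, so that the limiting stationary solution dominates a dense family of these bumps and hence approaches $1$ uniformly far from $\partial\Omega$. This extends the parabolic existence argument from the proof of Theorem~\ref{thm:unique} with the additional bookkeeping needed when $\phi$ is not bounded below.

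Recall from \cite[Proposition~2.2]{BG22} that for each $\eps \in (0,1)$ there is a smooth compactly supported subsolution $\underline{v}_\eps$ of $-\Delta v = f(v)$ taking values in $[0, 1-\eps]$ with $\underline{v}_\eps(0) = 1-\eps$ and $\supp \underline{v}_\eps \subset B_{R(\eps)}$, where $R(\eps) \to \infty$ as $\eps \to 0$. Because $\phi$ is globally Lipschitz, the interior $\Omega_R := \{x \in \Omega : B_R(x) \subset \Omega\}$ is nonempty for every $R > 0$. I would fix a countable sequence $\{(x_j, \eps_j)\}_{j \geq 1}$ with $x_j \in \Omega_{R(\eps_j)}$, arranged so that every admissible pair $(x, \eps)$ with $\eps \in (0,1)$ and $x \in \Omega_{R(\eps)}$ is approximated to arbitrary precision.

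Starting from $u_0 \equiv 0$, inductively let $u_{j+1}$ be the long-time limit of the parabolic flow $\partial_t w = \Delta w + f(w)$ in $\Omega$ with zero Dirichlet data and initial datum $\max\bigl(u_j,\, \underline{v}_{\eps_j}(\cdot - x_j)\bigr)$. This initial datum is a viscosity subsolution of \eqref{eq:main} lying below the constant supersolution $1$, so as in the proof of Theorem~\ref{thm:unique} the parabolic flow is nondecreasing in $t$, trapped in $[0,1]$, and converges via interior Schauder estimates to a classical stationary solution $u_{j+1}$. By construction $u_j \leq u_{j+1} \leq 1$ and $u_{j+1}(x_j) \geq 1 - \eps_j$. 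The monotone bounded sequence $(u_j)$ therefore converges locally in $\m{C}^2$ to a positive bounded solution $u$ of \eqref{eq:main} with $u(x_j) \geq 1 - \eps_j$ for all $j$. Positivity of $u$ follows from the strong maximum principle, since $u_1 \geq \underline{v}_{\eps_1}(\cdot - x_1) \not\equiv 0$. The pointwise lower bound, combined with uniform interior Schauder estimates on $u$ and the density of the enumeration, upgrades to the uniform limit $u(x) \to 1$ as $\dist(x, \partial\Omega) \to \infty$: given $\delta > 0$, pick $\eps_* < \delta/2$ and, for any $x$ with $\dist(x, \partial\Omega)$ large, use density to find $x_j$ close to $x$ with $\eps_j \leq \eps_*$, then apply the interior modulus of continuity.

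The main technical obstacle is coordinating the enumeration so that small $\eps_j$ remain densely available near every sufficiently interior point while preserving the admissibility constraint $x_j \in \Omega_{R(\eps_j)}$; this is routine since $R$ depends continuously on $\eps$ and $\Omega_R$ is nonempty for every $R$. A clean alternative is to bypass the enumeration entirely via Perron's method: the pointwise supremum of all $[0,1]$-valued subsolutions of \eqref{eq:main} is itself a solution and trivially dominates every translate $\underline{v}_\eps(\cdot - x_0)$ with $x_0 \in \Omega_{R(\eps)}$, yielding the uniform convergence directly.
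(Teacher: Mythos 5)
Your proposal is correct but takes a genuinely different route. The paper runs the parabolic flow downward from the constant supersolution $1$: the resulting monotone-decreasing limit $u$ automatically dominates \emph{every} translate $\ubar{v}_\eps(\,\cdot - x_0)$ with $x_0 \in \Omega_{R(\eps)}$ via a single application of the parabolic comparison principle (since $v(0,\cdot)=1>\ubar{v}_\eps$), so $u(x)\geq 1-\eps$ wherever $\dist(x,\Omega^\cc)\geq R(\eps)$, giving the uniform convergence immediately. By building upward from $0$ instead, you obtain a limit solution that is only guaranteed to dominate the \emph{enumerated} translates, so you need the dense bookkeeping of pairs $(x_j,\eps_j)$ plus a uniform interior modulus of continuity to upgrade pointwise bounds at the enumerated points to a uniform lower bound. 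This works, and your Perron alternative circumvents the enumeration, but both are more involved than the paper's downward flow, which trades the enumeration for a single comparison. One advantage of your upward construction is that it gives the \emph{minimal} solution converging to $1$ (which could be useful elsewhere), whereas the downward flow produces the maximal solution; in this lemma, however, uniqueness (Lemma~\ref{lem:epigraph-uniqueness}) renders the distinction moot. Also note that your argument, like the paper's, needs boundary H\"older estimates up to $\partial\Omega$ (e.g.\ \cite[Lemma~1.2.4]{Kenig}) to ensure the locally uniform limit of the $u_j$ still vanishes on $\partial\Omega$; this is worth flagging explicitly since passing to the limit only in the interior does not control the boundary trace.
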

\begin{proof}
  Let $v$ solve the parabolic form of \eqref{eq:main} with initial data $1$:
  \begin{equation*}
    \begin{cases}
      \partial_t v = \Delta v + f(v) & \text{in } \Omega,\\
      v = 0 & \text{on } \partial\Omega,\\
      v(0, \anon) = 1 & \text{in } \Omega.
    \end{cases}
  \end{equation*}
  Because $1$ is a supersolution of \eqref{eq:main}, $\partial_t v < 0$, so $v$ has a long-time limit $u = v(\infty, \anon)$.
  Schauder estimates imply that $u$ is a nonnegative bounded solution of \eqref{eq:main}.

  Fix $s < 1$ such that $\int_0^s f(r) \d r > 0$, which exists by the unbalanced hypothesis in \ref{hyp:bistable}.
  In the proof of \cite[Proposition~2.2]{BG22}, the first two authors constructed a subsolution $0 \lneqq w \leq 1$ of $-\Delta v = f(v)$ supported in a ball of radius $R(s) > 0$ such that $w(0) = s$.
  If $\dist(x, \Omega^\cc) \geq R(s)$, $w_x \coloneqq w(\anon - x)$ is a subsolution of \eqref{eq:main} with $w_x(x) = s$.
  Since $v(0,\anon) = 1 > w_x$, the parabolic comparison principle implies that $v > w_x$ always.
  Sending $t \to \infty$, we see that $u \geq w_x$.
  So $u(x) \geq s > 0$, and by the strong maximum principle, $u$ is positive in $\Omega$.
  We can take $s$ arbitrarily close to $1$, so in fact $u(x) \to 1$ uniformly as $\dist(x, \Omega^\cc) \to \infty$, as desired.
\end{proof}
Now, uniqueness is a consequence of work of the first author with Caffarelli and Nirenberg~\cite{BCN97b}.
\begin{lemma}
  \label{lem:epigraph-uniqueness}
  Let $\Omega$ be a uniformly Lipschitz epigraph and $f$ be bistable \ref{hyp:bistable}.
  Then \eqref{eq:main} admits a unique positive bounded solution that converges to $1$ uniformly as $\dist(x, \Omega^\cc) \to 1$.
  Moreover, $\partial_y u > 0$.
\end{lemma}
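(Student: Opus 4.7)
The plan is to parallel the proof of Theorem~\ref{thm:unique}, substituting the hypothesized uniform convergence $u\to 1$ for the lower bound on $\Omega$ that entered there through Lemma~\ref{lem:limit}. Because $\phi$ is Lipschitz, $\dist(x,\Omega^\cc)\asymp y-\phi(x')$, so for any solution $u$ in our class there exist $s\in(0,1)$ with $\sup_{[s,1]}f'<0$ and $H>0$ such that $u\ge s$ wherever $y-\phi(x')\ge H$. This is the only structural fact used in the proof of Proposition~\ref{prop:MP-interior}: the lower bound on $\Omega$ plays no role in that argument. Hence the same proof yields the analogous interior maximum principle for any two solutions in our class, after enlarging $H$ so that it applies to both simultaneously.

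With this adapted maximum principle in hand, I would run the sliding argument of Theorem~\ref{thm:unique} nearly verbatim. Given two solutions $u,v$ in the class, set $\hs\coloneqq\sup\{h>0:v^h\not\leq u\}$; the adapted Proposition~\ref{prop:MP-interior} gives $\hs\leq H$. Suppose for contradiction that $\hs>0$, fix $h_0\in(0,\hs)$, and claim $\inf_{\{h_0\le y'\le H\}}(u-v^\hs)>0$. For this I would argue by contradiction: any near-touching sequence $x_n=(x_n',y_n)$ with $y_n'\in[h_0,H]$ shifts horizontally to a subsequential limit $(\Omega_\infty,u_\infty,v_\infty)$ in which $v_\infty^\hs\le u_\infty$ and both solve \eqref{eq:main}; the strong maximum principle forces $u_\infty>v_\infty^\hs$ in $\Omega_\infty$, contradicting contact at $(0,y_\infty')$. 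Uniform continuity and a second application of Proposition~\ref{prop:MP-interior} then push $\hs$ below itself, contradicting its definition. Hence $\hs=0$, so $v\le u$, and by symmetry $u=v$.

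For monotonicity, I would apply the same sliding argument with $v=u$. The resulting inequality $u^h\le u$ for all $h\ge 0$ says precisely that $u$ is nondecreasing in $y$ inside $\Omega$. Differentiating \eqref{eq:main} in $y$, the function $\partial_y u\ge 0$ satisfies $(-\Delta-f'(u))\partial_y u=0$; since $u\to 1$ far from the boundary, $\partial_y u\not\equiv 0$, and the strong maximum principle upgrades to $\partial_y u>0$.

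The main obstacle is ensuring that the horizontally-shifted limit $u_\infty$ above is not identically zero, so that the strong maximum principle delivers a strict inequality. In Theorem~\ref{thm:unique}, this nonvanishing came from Lemma~\ref{lem:limit}, which leaned on $\Omega$ being bounded from below. Here I would invoke the uniform-convergence hypothesis directly: it is stable under horizontal shifts of $\phi$, so $u_\infty\to 1$ as $\dist(\anon,\Omega_\infty^\cc)\to\infty$, forcing $u_\infty\not\equiv 0$ and indeed pinning down its behavior far from $\partial\Omega_\infty$. With this bridge in place the full architecture of the Theorem~\ref{thm:unique} proof transfers.
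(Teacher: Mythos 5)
Your proposal is correct and uses essentially the same method as the paper, though the paper's own proof is just a citation: it notes that the argument of Theorem~1.2(d) in \cite{BCN97b} (the sliding method) applies verbatim, since that argument requires only a standard maximum principle and the uniform convergence of solutions to $1$ at infinity, which here is assumed as a hypothesis rather than derived. Your re-derivation by adapting the paper's own proof of Theorem~\ref{thm:unique} (itself modeled on \cite{BCN97b}) is the same sliding argument made explicit, and your key observations are exactly right: Proposition~\ref{prop:MP-interior} uses the lower bound on $\Omega$ only through the conclusion ``$u\ge s$ where $y'\ge H$,'' which here follows from the hypothesized convergence after enlarging $H$ to cover both solutions; and the nondegeneracy of the recentered limit $u_\infty$, which Theorem~\ref{thm:unique}'s proof draws from Lemma~\ref{lem:limit}, instead comes from the shift-invariance of the hypothesized uniform convergence. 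Your argument for $\partial_y u>0$ by sliding $u$ against itself and then upgrading via the strong maximum principle is also sound (and is indeed how \cite{BCN97b} obtains monotonicity). The only omission is existence: the lemma asserts that such a solution exists, which is not produced by the sliding argument; in the paper this is supplied by the preceding Lemma~\ref{lem:existence}, so you should cite it for that half of the claim.
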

\begin{proof}
  This follows immediately from the proof of Theorem~1.2(d) in \cite{BCN97b} given in Section 5 of the same.
  The argument only requires a standard form of the maximum principle and the uniform convergence of solutions to $1$ at infinity.
  In \cite{BCN97b}, the authors assume that the nonlinearity $f$ is monostable rather than bistable, meaning $f > 0$ in $(0, 1)$.
  This implies that all positive bounded solutions converge to $1$ at infinity, so the uniqueness result in \cite{BCN97b} is unconditional.
  For our bistable nonlinearities, we instead assume such convergence as a hypothesis to obtain conditional uniqueness.
\end{proof}
Now, we need only show that $u$ is strictly stable.
\begin{lemma}
  \label{lem:strictly-stable}
  Suppose $\Omega$ is the epigraph of a uniformly $\m{C}^{1,\al}$ function for some $\al \in (0, 1)$.
  Let $f$ be bistable \ref{hyp:bistable} and $u$ be the unique positive bounded solution of \eqref{eq:main} converging to $1$ uniformly as $\dist(x, \Omega^\cc) \to 1$.
  Then $u$ is strictly stable: $\lambda(-\Delta - f'(u), \Omega) > 0$.
\end{lemma}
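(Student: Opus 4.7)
The plan is to argue by contradiction via the stable-compact method: assume $\lambda := \lambda(-\Delta - f'(u), \Omega) = 0$, extract a positive limit eigenfunction through bounded exhaustions, and close the argument with a Green's identity against $\partial_y u$.

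I first collect two ingredients forming the stable-compact decomposition. By Lemmas~\ref{lem:stable-limit} and \ref{lem:existence}, $u \to 1$ uniformly as $\dist(x, \partial\Omega) \to \infty$, so for some $R_1 > 0$, $-f'(u) \geq \abs{f'(1)}/2$ on the \emph{stable} region $\Omega^{R_1} := \{\dist(\anon, \partial\Omega) > R_1\}$. On the \emph{compact} complement $\bar\Omega \setminus \Omega^{R_1}$, I claim $\partial_y u \geq c > 0$ uniformly. Near $\partial\Omega$, this follows from Hopf's lemma for $u$ (using the $\m{C}^{1,\al}$ regularity) combined with the epigraph bound $\nu_y \leq -(1+(\op{Lip}\phi)^2)^{-1/2} < 0$, which together give $\partial_y u = \partial_\nu u \cdot \nu_y \geq c_0 > 0$ in a thin strip. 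In the remaining bounded zone, a $\m{C}^{1,\beta}$ blow-up by horizontal translation excludes $\partial_y u(x_n) \to 0$: any subsequential limit $(\Omega_\infty, u_\infty)$ would produce $\partial_y u_\infty \geq 0$ with an interior zero, forcing $\partial_y u_\infty \equiv 0$ by the strong maximum principle, contradicting $u_\infty = 0$ on $\partial\Omega_\infty$ together with $u_\infty \to 1$ at infinity.

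Since $\partial_y u > 0$ satisfies $L(\partial_y u) = 0$ for $L := -\Delta - f'(u)$, the definition \eqref{eq:eigenvalue} gives $\lambda \geq 0$. Assume for contradiction $\lambda = 0$. By the exhaustion property of the principal eigenvalue (cf.\ \cite[Proposition~2.3]{BR}), $\lambda_n := \lambda(L, \Omega \cap B_n) \searrow 0$. Let $\phi_n > 0$ be the classical principal eigenfunctions, normalized so that $\phi_n(y_n) = \norm{\phi_n}_{L^\infty} = 1$ at some maximum point $y_n$. A barrier $A\e^{-\mu \dist(\anon, \partial\Omega)}$ with $\mu^2 < \abs{f'(1)}/2$ is a supersolution of $L$ on $\Omega^{R_1}$, and the maximum principle applied in $\Omega^{R_1} \cap B_n$ (where $\phi_n \leq 1$ on the inner boundary and $\phi_n = 0$ on $\partial B_n$) yields $\phi_n(x) \lesssim \e^{-\mu(\dist(x, \partial\Omega) - R_1)_+}$ uniformly in $n$. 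Consequently $\dist(y_n, \partial\Omega)$ is uniformly bounded, so translating $y_n \to y_\infty$ and passing to $\m{C}^{1,\beta}$-limits using the uniform $\m{C}^{1,\al}$ regularity produces $(\Omega_\infty, u_\infty, \phi_\infty)$ with $\phi_\infty > 0$ in $\Omega_\infty$, $\phi_\infty = 0$ on $\partial\Omega_\infty$, $L_\infty \phi_\infty = 0$, and $\phi_\infty$ decaying exponentially.

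Green's identity on $\Omega_\infty \cap B_N$ applied to $\phi_\infty$ and $\psi := \partial_y u_\infty$ reads
\begin{equation*}
  0 = \int_{\Omega_\infty \cap B_N}(\phi_\infty L_\infty \psi - \psi L_\infty \phi_\infty) = \int_{\partial(\Omega_\infty \cap B_N)}\big(\psi\,\partial_\nu \phi_\infty - \phi_\infty\,\partial_\nu \psi\big).
\end{equation*}
Since $\psi$ satisfies $L_\infty \psi = 0$ with $\psi \to 0$ far from $\partial\Omega_\infty$, the same barrier yields exponential decay of $\psi$, so the contribution on $\Omega_\infty \cap \partial B_N$ vanishes as $N \to \infty$. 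On $\partial\Omega_\infty$, $\phi_\infty = 0$, leaving $\int_{\partial\Omega_\infty \cap B_N} \psi\,\partial_\nu \phi_\infty$, which is strictly negative: $\psi > 0$ on $\partial\Omega_\infty$ by the uniform positivity passed to the limit, and $\partial_\nu \phi_\infty < 0$ by Hopf's lemma for $\phi_\infty$. The resulting inequality $0 < 0$ is the desired contradiction, so $\lambda > 0$. The main obstacle is producing $\phi_\infty$ with \emph{simultaneous} nontrivial positivity and exponential decay at infinity — precisely the stable-compact dichotomy, with the barrier in the stable region supplying the decay and the compactness of $\m{C}^{1,\al}$-translates in the compact region supplying the nondegenerate limit. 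The $\m{C}^{1,\al}$ regularity is essential both for Hopf applied to $u$ (in the lower bound for $\partial_y u$) and for Hopf applied to $\phi_\infty$ (in the sign of $\partial_\nu \phi_\infty$).
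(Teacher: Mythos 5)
Your overall strategy — exploiting the stable-compact dichotomy, using $\partial_y u > 0$ as a null eigenfunction, and pairing it with a candidate principal eigenfunction via an integration-by-parts identity — is in the same spirit as the paper's proof, which also exploits the identity $\lambda_R \int_{G_R} u' \psi_R = \int_{\partial G_R} u' \abs{\partial_\nu \psi_R}$. However, your execution has a gap that I do not see how to close without reverting to the paper's finite-domain approach.

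The decisive issue is the vanishing of the lateral boundary term in your Green's identity. You integrate over $\Omega_\infty \cap B_N$ and claim the contribution from $\Omega_\infty \cap \partial B_N$ tends to zero because $\phi_\infty$ and $\psi = \partial_y u_\infty$ decay exponentially. But this decay is only in the \emph{vertical} variable (distance to $\partial\Omega_\infty$), not in $\abs{x'}$. Near $\partial\Omega_\infty$, both $\psi$ and $\phi_\infty$ (together with their gradients) remain bounded away from zero uniformly in $x'$; indeed your own uniform lower bound $\partial_y u \geq c > 0$ on the compact region shows $\psi$ cannot decay horizontally. The portion of $\Omega_\infty \cap \partial B_N$ lying in a band $\{\dist(\cdot, \partial\Omega_\infty) < R_1\}$ has surface measure $\gtrsim N^{d-2}$, so the lateral term is $\m{O}(N^{d-2})$ and does not vanish; worse, it has no definite sign, so it cannot simply be dropped. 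This is exactly where the paper's argument diverges: by keeping the eigenfunction $\psi_R$ on a \emph{bounded} box $G_R$, the lateral contribution to $\int_{\partial G_R} u' \abs{\partial_\nu \psi_R}$ is nonnegative (since $\psi_R$ vanishes on all of $\partial G_R$), so it can be discarded when bounding $\lambda_R$ from \emph{below}. You lose this monotonicity by passing to the limit first.

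A secondary, more repairable issue: the barrier $A\e^{-\mu \dist(\anon, \partial\Omega)}$ is not a supersolution of $L$ on $\Omega^{R_1}$ in general, because $\Delta\,\dist(\anon, \partial\Omega)$ carries a \emph{negative} singular part along the ridge/cut locus (present even for smooth but nonconvex epigraphs such as $\phi(x') = -\sqrt{1+\abs{x'}^2}$), and this sign is precisely the wrong one for your supersolution inequality. The vertical decay of $\phi_n$ can still be obtained — e.g.\ by comparing $\phi_n$ on balls $B_{d(x_0)-R_1}(x_0) \subset \Omega^{R_1}$ against radial supersolutions of $-\Delta + c_0$ — but the fix does not address the lateral-boundary issue above, which is the genuine obstruction.
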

\begin{proof}
  We follow the proof of Proposition~4.3 in~\cite{BG23b}.
  Again, that work assumed that $f$ is monostable and concluded that solutions of \eqref{eq:main} converge to $1$ far from the boundary.
  Here, we take this convergence as a hypothesis.
  We are working with $\m{C}^{1,\al}$ rather than $\m{C}^{2,\al}$ boundary, so we verify that the proof of \cite[Proposition~4.3]{BG23b} continues to hold in our setting.
  We summarize except where boundary regularity comes into play.

  Let $y' \coloneqq y - \phi(x)$, so $\Omega = \{y > \phi(x')\} = \{y' > 0\}$.
  Because $f \in \m{C}^1$ and $f'(1) < 0$, there exists $\eta > 0$ such that $f' \leq \frac{2}{3} f'(1) < 0$ on $(\eta, 1)$.
  By hypothesis, $u$ tends to $1$ uniformly away from the boundary.
  Hence $\op{Lip}\phi < \infty$ implies that there exists $H \geq 2$ such that $u > \eta$ where $y' > H$, and in particular
  \begin{equation}
    \label{eq:decreasing}
    -f'(u) \geq \frac{2}{3} \abs{f'(1)} > 0 \quad \text{on } \{y' > H\}.
  \end{equation}
  Let $\m{L} \coloneqq \Delta + f'(u)$ and write $u' \coloneqq \partial_y u$, which satisfies $\m{L} u' = 0$ and $u' > 0$ due to Lemma~\ref{lem:epigraph-uniqueness}.

  Given $R \geq H + 1$, let $G_R \coloneqq \{(x',y') : \abs{x'} < R, 0 < y' < R\}$ and $\lambda_R \coloneqq \lambda(-\m{L}, G_R).$
  By \cite[Proposition~2.3(iv)]{BR}, $\lambda(-\m{L},\Omega) = \lim_{R \to \infty} \lambda_R$.
  The eigenvalues $\lambda_R$ are nonincreasing in $R$ because $(G_R)_R$ is increasing, and we wish to show that they remain uniformly positive as $R \to \infty$.
  We are done if $\lambda_R \geq \frac{1}{3} \abs{f'(1)}$, so suppose there exists $\ubar{R} \geq 3H$ such that $\lambda_R \leq \frac{1}{3}\abs{f'(1)}$ for all $R \geq \ubar{R}$.
  From now on, we assume this condition on $R$.

  For each $R$, we have a corresponding positive principal eigenfunction $\psi_R$ satisfying $-\m{L} \psi_R = \lambda_R \psi_R$.
  By Theorem~8.33 of \cite{GT}, $u$ and $\psi_R$ are $\m{C}^{1,\al}$ on $\bar\Omega$.
  Using an intermediate Schauder estimate such as \cite[Theorem~5.1]{GH}, we can integrate $-\m{L} \psi_R = \lambda_R \psi_R$ and $\m{L}u' = 0$ by parts to obtain
  \begin{equation*}
    \lambda_R \int_{G_R} u' \psi_R = \int_{\partial G_R} u' \abss{\partial_\nu \psi_R}.
  \end{equation*}
  Throughout, we use $\nu$ to denotes the outward normal vector field on the domain of integration.
  Let $\partial_- G_R \coloneqq \partial G_R \cap \partial \Omega$ denote the bottom portion of $\partial G_R$.
  Then
  \begin{equation*}
    \lambda_R \geq \frac{\int_{\partial_- G_R}u' \abss{\partial_\nu \psi_R}}{\int_{G_R} u' \psi_R}.
  \end{equation*}

  Now let $S_R \coloneqq G_R \cap \{y' < H\}$.
  To begin, we show that $\int_{G_R} u' \psi_R \lesssim \int_{S_R} u' \psi_R$.
  Let $U_R \coloneqq G_R \cap \{y' > H\}$ and write $\partial_+ S_R = G_R \cap \{y' = H\}$.
  Using \eqref{eq:decreasing} and $\lambda_R \leq \frac{1}{3} \abs{f'(1)}$, integration by parts and routine inequalities yield
  \begin{equation*}
    \int_{U_R} \psi_R \lesssim \int_{\partial_+S_R} \abs{\partial_\nu \psi_R}.
  \end{equation*}
  Consider $(x',H) \in \partial_+ S_R$ with $x' \in B_{R-1}'$, where we use $B'$ to indicate balls in $\R^{d-1}$.
  Interior Schauder estimates and the Harnack inequality imply that
  \begin{equation*}
    \abs{\nab \psi_R(x',H)} \lesssim \int_{H-1}^H \psi_R(x', y'),
  \end{equation*}
  so
  \begin{equation*}
    \int_{B_{R-1}'} \abs{\partial_\nu \psi_R(x',H)} \d x' \lesssim \int_{B_{R-1}' \times (H-1,H)} \psi_R(x', y') \d x' \ds y'.
  \end{equation*}
  Schauder estimates and the boundary Harnack inequality (Theorem 1.1 of \cite{CFMS}) allow us to extend the radius $R-1$ to $R$, so
  \begin{equation*}
    \int_{U_R} \psi_R \lesssim \int_{\partial_+S_R} \abs{\partial_\nu \psi_R} \lesssim \int_{S_R \cap \{H-1 < y' < H\}} \psi_R.
  \end{equation*}
  Finally, $u \geq \eta > 0$ where $\{y' = H\}$ while $u = 0$ on $\{y' = 0\}$.
  By the mean value theorem and \cite[Theorem 1.1]{CFMS}, $u' \gtrsim 1$ in the strip $\{H-1 < y' < H\}$.
  Therefore
  \begin{equation*}
    \int_{U_R} u' \psi_R \lesssim \int_{S_R \cap \{H-1 < y' < H\}} u' \psi_R \leq \int_{S_R} u' \psi_R.
  \end{equation*}
  Because $G_R = U_R \cup S_R$ (up to closure), we see that $\int_{G_R} u' \psi_R \lesssim \int_{S_R} u' \psi_R$ and hence
  \begin{equation}
    \label{eq:reduced}
    \lambda_R \gtrsim \frac{\int_{\partial_- G_R}u' \abss{\partial_\nu \psi_R}}{\int_{S_R} u' \psi_R}.
  \end{equation}
  Thus we have reduced our analysis to a band along the boundary of $\Omega$.

  For each $x' \in B_{R - 2H}'$, \cite[Theorem 1.1]{CFMS} yields
  \begin{equation*}
    \int_0^H u'(x', y') \psi_R(x', y') \d y' \lesssim \psi_R(x', H).
  \end{equation*}
  Applying the same boundary Harnack inequality in the lateral region $S_R \setminus S_{R - 2H}$, we can check that
  \begin{equation}
    \label{eq:shrink}
    \int_{S_R} u' \psi_R \lesssim \int_{\partial_+ S_{R - 2H}} \psi_R.
  \end{equation}
  Now fix $x' \in B_{R - 2H}'$.
  We claim that
  \begin{equation}
    \label{eq:Hopf}
    u'(x', y') \gtrsim 1 \And \psi_R(x',y') \lesssim \partial_y\psi_R(x',0) y' 
  \end{equation}
  for all $y' \in [0, H].$
  In our judgment, this is the most delicate part of the argument, so we reproduce the proof in full.
  Toward a contradiction, suppose there exists a sequence of radii $R_n \geq \ubar{R} \geq 3H$ and points $x_n = (x_n', y_n') \in S_{R_n - 2H}$ such that
  \begin{equation*}
    \partial_y \psi_{R_n}(x_n', 0) \leq \frac{1}{n y_n'} \psi_{R_n}(x_n', y_n').
  \end{equation*}
  Writing $A_n(x) \coloneqq y_n' x + x_n$ and
  \begin{equation*}
    \Psi_n \coloneqq \frac{\psi_{R_n} \circ A_n}{\psi_{R_n} \circ A_n(0)},
  \end{equation*}
  we have
  \begin{equation*}
    \begin{cases}
      -\Delta \Psi_n = (y_n')^2 (f' \circ u \circ A_n + \lambda_{R_n}) \Psi_n & \text{in } A_n^{-1} G_{R_n},\\
      \Psi_n = 0 & \text{on } \partial A_n^{-1} G_{R_n}
    \end{cases}
  \end{equation*}
  as well as $\Psi_n(0) = 1$ and $\partial_y \Psi_n(0, -1) \leq n^{-1}$.
  Because $y_n' \leq H$, we have isotropically dilated our domain by a factor of at least $H^{-1}$.
  We assumed $x_n \in S_{R_n - 2H}$, so $B_2 \cap \partial A_n^{-1} G_{R_n} = B_2 \cap \partial A_n^{-1} \Omega$.
  That is, the origin is at least distance $2$ from the corners of $A_n^{-1}G_{R_n}$.

  Moreover, the sequence of epigraphs $A_n^{-1} G_{R_n}$ is uniformly $\m{C}^{1,\al}$, as are the coefficients in the elliptic equation for $\Psi_n$.
  Hence $\Psi_n$ obeys uniform Harnack and Schauder estimates up to the boundary, meaning $\Psi_n$ is likewise uniformly $\m{C}^{1,\al}$.
  We can thus extract locally uniform subsequential limits $\Psi_\infty$ of $(\Psi_n)_n$, $\Omega_\infty$ of $(A_n^{-1} \Omega)_n$, and $V_\infty$ of $(y_n')^2 (f' \circ u \circ A_n + \lambda_{R_n})$.
  Here we use the fact that $\lambda_{R_n} \lesssim \op{Lip}f + 1$ is uniformly bounded because $A_n^{-1} G_{R_n}$ contains a ball of some size $c > 0$ independent of $n$.
  The limits $\Psi_\infty$, $\Omega_\infty$, and $V_\infty$ are all uniformly $\m{C}^{1,\al}$ and
  \begin{equation*}
    \begin{cases}
      -\Delta \Psi_\infty = V_\infty \Psi_\infty & \text{in } B_2 \cap \Omega_\infty,\\
      \Psi_\infty = 0 & \text{on } B_2 \cap \partial \Omega_\infty
    \end{cases}
  \end{equation*}
  while $\Psi_\infty(0) = 1$ and $\partial_y \Psi_\infty(0, -1) = 0$.
  This, however, contradicts the Hopf lemma, which holds on $\m{C}^{1,\al}$ domains~\cite{Giraud,Lieberman}.
  The contradiction proves the second part of \eqref{eq:Hopf}.
  The first follows from similar (in fact, simpler) considerations.
  
  To conclude, we observe that $\partial_y \psi_R \asymp \abs{\partial_\nu \psi_R}$ on $\partial\Omega$ because $\phi$ is uniformly Lipschitz.
  So \eqref{eq:shrink} and \eqref{eq:Hopf} yield
  \begin{equation*}
    \int_{S_R} u' \psi_r \lesssim \int_{\partial_+ S_{R - 2H}} \psi_R \lesssim \int_{\partial_- G_R} u' \abs{\partial_\nu \psi_R}.
  \end{equation*}
  By \eqref{eq:reduced}, $\lambda_R \gtrsim 1$, as desired.
\end{proof}

\subsection{Instability of spike solutions}
\label{sec:unstable}

As a partial converse to the above results, we show that the ``spike solutions'' constructed in Section~\ref{sec:multiple} are strictly unstable.
\begin{proposition}
  \label{prop:unstable}
  Let $\Omega$ be a locally Lipschitz domain (not necessarily an epigraph) with aperture greater than $\pi$, and let $f$ be bistable \ref{hyp:bistable} or field-type \ref{hyp:field} satisfying \ref{hyp:ground}.
  Then the solutions of \eqref{eq:main} constructed in Theorems~\ref{thm:multiple} and \ref{thm:field-multiple} are strictly unstable.
\end{proposition}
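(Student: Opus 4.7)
The plan is to exhibit a compactly supported test function $\psi$ on $\Omega$ whose Rayleigh quotient
\begin{equation*}
  R(\psi) \coloneqq \int_\Omega \abs{\nab \psi}^2 - f'(u) \psi^2
\end{equation*}
is strictly negative. The bounded set $V \coloneqq \supp \psi \subset \Omega$ will then satisfy $\lambda(-\Delta - f'(u), V) < 0$, and monotonicity of the generalized principal eigenvalue in the domain (\cite[Proposition~2.3(iii)]{BR}) will give $\lambda(-\Delta - f'(u), \Omega) < 0$, as desired.

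First I would establish strict instability of the ground state: $\lambda(-\Delta - f'(U), \R^d) < 0$. By Theorem~1.4 of~\cite{LWWW}, invoked already in Lemma~\ref{lem:stable-limit}, every bounded weakly stable solution of $-\Delta v = f(v)$ on $\R^d$ is constant; since $U$ is nonconstant, it cannot be weakly stable. Exhausting $\R^d$ by balls and using the limit formula $\lambda(-\Delta - f'(U), \R^d) = \lim_{R \to \infty} \lambda(-\Delta - f'(U), B_R)$ from~\cite[Proposition~2.3(iv)]{BR}, I obtain a radius $R$ with $\lambda(-\Delta - f'(U), B_R) < 0$. The associated positive principal eigenfunction $\psi_0 \in H_0^1(B_R)$, extended by zero to $\R^d$, then satisfies
\begin{equation*}
  \int_{\R^d} \abs{\nab \psi_0}^2 - f'(U) \psi_0^2 < 0.
\end{equation*}

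Next I would transfer this instability to a spike solution $u = \SP{u} + \SP{w}$ constructed in Section~\ref{sec:multiple}. The construction ensures that $u(\anon + z_k) \to U$ in $L^\infty(B_R)$ as $\abs{k} \to \infty$: the projection $\bar{U}_0$ decays exponentially far from $z_0$; the contributions from other spikes $U(\anon - z_j)$, $j \neq k$, are exponentially small since $\abs{z_k - z_j} \gtrsim L_\pm \gg 1$; the Delaunay residue term $w_\pm$ obeys~\eqref{eq:Delaunay-residue}; and the corrector $\SP{w}$ is small by Proposition~\ref{prop:corrector}\ref{item:small-corrector}. Since the spike centers $z_k$ march into the interior of $\Omega$ along $\theta_\pm$, for $\abs{k}$ sufficiently large we have $z_k + B_R \subset \Omega$ and $f'(u(\anon))$ is uniformly close to $f'(U(\anon - z_k))$ on $z_k + B_R$. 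Setting $\psi(x) \coloneqq \psi_0(x - z_k)$ therefore yields $R(\psi) < 0$, completing the argument.

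The main obstacle is the first step. A direct Rayleigh quotient test against $U$ itself does not obviously produce a negative value for general bistable nonlinearities, but the rigidity result of~\cite{LWWW} sidesteps this by ruling out weak stability wholesale. The subsequent localization is routine: by design, the spike solutions are exponentially small perturbations of widely-spaced ground states, so translating the destabilizing eigenfunction onto any single distant spike preserves negativity of the Rayleigh quotient.
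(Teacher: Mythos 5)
Your proof is correct, but it follows a different route than the paper's. The paper transfers the problem to the full periodic Delaunay solution $u_{L}$ in $\R^d$ via Lemma~\ref{lem:semicontinuous}, then shows $\lambda(-\Delta - f'(u_L),\R^d)<0$ by observing that $\partial_y u_L$ is a sign-changing null eigenfunction on the fundamental domain $\R^{d-1}\times L\T$, while the principal eigenvalue is simple with a positive eigenfunction (the essential spectrum lies above $\abs{f'(0)}/2>0$). You instead establish instability of the single ground state $U$, extract a compactly supported destabilizing test function in some $B_R$, and translate it onto a distant spike of $u$. Both approaches ultimately exploit translation invariance to produce a sign-changing null mode, and both require $L_0$ (hence $L_\pm$) sufficiently large, so neither is strictly stronger. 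Your route is arguably more elementary in that it avoids the spectral analysis on the cylinder and the domain-limit machinery of Lemma~\ref{lem:semicontinuous}, but it does invoke the heavy rigidity theorem of~\cite{LWWW} where a direct argument (the principal eigenvalue of $-\Delta-f'(U)$ is simple with a positive eigenfunction, whereas the kernel $\op{span}\{\partial_i U\}$ from \eqref{eq:nondegenerate} consists entirely of sign-changing functions, forcing $\lambda<0$) would suffice. One small imprecision: the statement ``$u(\anon+z_k)\to U$ in $L^\infty(B_R)$ as $\abs{k}\to\infty$'' is not literally true, since the Delaunay residue $w_\pm$ does not vanish as $\abs{k}\to\infty$; the correct statement is that $u(\anon+z_k)$ converges to a translate of $u_{L_\pm}$, which is $\m{O}(\e^{-(1+\xi)L_\pm/2})$-close to $U$ on $B_R$ by Proposition~\ref{prop:Delaunay}(ii). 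This does not affect the conclusion, since for $L_0$ large the total error is still below the threshold needed to preserve negativity of the Rayleigh quotient, but you should phrase the limiting claim accordingly.
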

\begin{proof}
  Let $u$ be a spike solution constructed in Section~\ref{sec:multiple}.
  It consists of a bent chain of widely separated ground states.
  Sending $x \cdot \theta_\pm \to \infty$ along the chain, \eqref{eq:Delaunay-limit} states that $u$ converges locally uniformly to the periodic Delaunay solution $u_{L_\pm}$ from Proposition~\ref{prop:Delaunay} for some $L_\pm \gg 1$.
  We arbitrarily choose $x \cdot \theta_+ \to \infty$ and write $L = L_+$.
  By Lemma~\ref{lem:semicontinuous}, $\lambda(-\Delta - f'(u), \Omega) \leq \lambda(-\Delta - f'(u_L), \R^d)$.
  Thus it suffices to show that $u_L$ is strictly unstable.

  We work on the fundamental domain $D_L = \{\abs{y} \leq L/2\}$, which we identify with $\R^{d-1} \times L \T$.
  Because $u_L$ is even in $y$, $\partial_y u_L$ is a sign-changing null eigenfunction of $-\Delta + f'(u_L)$ in $D_L$.
  Moreover, $u_L \to 0$ at infinity, which implies that $-f'(u_L) \geq \abs{f'(0)}/2 > 0$ outside a compact subset of $D_L$.
  It follows that $-\Delta - f'(u_L)$ has pure point spectrum below $\abs{f'(0)}/2$ (for example, \emph{Cf.} Theorem 9.38 in \cite{Teschl}), and the principal eigenvalue is simple.
  Because $\partial_y u_L$ changes sign, it is not principal.
  Hence $\lambda(-\Delta - f'(u_L), \R^d) < 0$, as desired.
\end{proof}
If $\Omega$ is an epigraph with aperture exceeding $\pi$, we have identified a single strictly stable solution and a wealth of strictly unstable spike solutions.
We conjecture that \emph{all} remaining solutions are strictly unstable, but we do not pursue the matter further.

\appendix

\section{Nondegeneracy}
\label{sec:nondegenerate}

In this appendix, we gather a number of existing results regarding the nondegeneracy \ref{hyp:ground} of ground states.
The existence of ground states has been widely studied for a range of nonlinearities.
For example, in collaboration with Lions and Peletier, the first author showed the following:
\begin{proposition}[\cite{BLP}]
  \label{prop:ground-existence}
  If $f$ satisfies \ref{hyp:bistable}, then it admits a ground state.
  If $f$ satisfies \ref{hyp:field} and $f(u) \ll u^{\frac{d+2}{d-2}}$ as $u \to \infty$ when $d \geq 3$ or $f(u) \ll u^p$ for some $p < \infty$ when $d = 2$, then a ground state exists.
\end{proposition}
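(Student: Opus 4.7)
The plan is to follow the Berestycki--Lions constrained variational approach for $d \geq 2$ and reduce to an elementary ODE analysis when $d = 1$. Set $F(u) \coloneqq \int_0^u f(s) \d s$ and consider the minimization
\begin{equation*}
  S \coloneqq \inf\Big\{\tfrac{1}{2}\int_{\R^d} \abs{\nab u}^2 : u \in H^1(\R^d),\ u \geq 0,\ \int_{\R^d} F(u) = 1\Big\}.
\end{equation*}
The constraint set is nonempty in both settings: the unbalanced condition $\int_0^1 f > 0$ in \ref{hyp:bistable} gives $F(1) > 0$, while the positivity of $f$ on $(\theta, \infty)$ in \ref{hyp:field} forces $F(u) \to \infty$. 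A smoothed indicator of a large ball at an appropriate height, rescaled to satisfy the normalization, supplies an admissible test function.

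To produce a minimizer, apply Schwarz symmetrization to any minimizing sequence; this decreases $\int \abs{\nab u}^2$ while preserving positivity and $\int F(u)$. The subcritical growth hypothesis yields $\abs{F(u)} \lesssim \abs{u}^2 + \abs{u}^{p+1}$ for some subcritical $p$, and Strauss's compact embedding of radial functions in $H^1(\R^d)$ into $L^{p+1}(\R^d)$, valid for all $d \geq 2$, lets us pass $\int F$ to the weak limit. The resulting minimizer $U^\star$ satisfies the Euler--Lagrange equation $-\Delta U^\star = \mu f(U^\star)$, and a Pohozaev identity combined with $\int F(U^\star) = 1 > 0$ forces the Lagrange multiplier $\mu$ to be strictly positive. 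The rescaling $U(x) \coloneqq U^\star(\sqrt{\mu}\, x)$ then solves $-\Delta U = f(U)$; positivity follows from the strong maximum principle, radial symmetry from the symmetrization (or a subsequent application of Gidas--Ni--Nirenberg), and exponential decay from comparison with barriers $e^{-\beta \abs{x}}$ for $\beta < \sqrt{\abs{f'(0)}}$ in the region where $f'(U) \leq f'(0)/2$.

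For $d = 1$, no variational machinery is required: the first integral $(U')^2 + 2F(U) \equiv 2F(u_0)$ reduces the existence of a ground state to finding $u_0 > \theta$ with $F(u_0) = 0$ and $F < 0$ on $(0, u_0)$. In the bistable case $F$ increases continuously from $F(\theta) < 0$ to $F(1) > 0$; in the field-type case $F$ passes from $F(\theta) < 0$ to $+\infty$. Both scenarios supply such a $u_0$ by the intermediate value theorem, and a quadrature then produces $U$.

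The main obstacle is compactness in $d = 2$, where the critical Sobolev exponent is infinite and no unconditional Sobolev inequality dominates $F$: the hypothesis $f(u) \ll u^p$ for \emph{some} finite $p$ is exactly what is needed to combine with Strauss's radial embedding and recover $\int F$ along weak limits. A secondary delicate point is the Pohozaev verification that $\mu > 0$, which relies essentially on subcriticality; at the critical exponent the computation degenerates and the rescaling argument breaks down, consistent with the known nonexistence of ground states in that borderline case.
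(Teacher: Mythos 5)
The paper offers no proof of this statement --- it simply cites \cite{BLP}, whose method is the radial ODE shooting argument: one shoots $u(0) = \xi$ in $-u'' - \tfrac{d-1}{r}u' = f(u)$ with $u'(0) = 0$ and locates a $\xi$ for which the trajectory decays to zero. Your proposal instead reconstructs the constrained variational approach of \cite{BL}, which is a legitimate and genuinely different route for $d \geq 3$ (and your $d=1$ quadrature is fine), but it has a real gap in dimension two. The Pohozaev identity for $-\Delta U^\star = \mu f(U^\star)$ reads
\begin{equation*}
  \frac{d-2}{2}\int_{\R^d} \abs{\nab U^\star}^2 = d\,\mu\int_{\R^d} F(U^\star),
\end{equation*}
and for $d = 2$ the left side vanishes identically, so $\mu\int F(U^\star) = 0$ and your constraint $\int F = 1$ forces $\mu = 0$; the minimizer would then be harmonic and hence trivial. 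This is not the degeneracy at the critical Sobolev exponent that you allude to --- it is the structural fact that in the plane every finite-energy solution of $-\Delta u = f(u)$ satisfies $\int_{\R^2} F(u) = 0$, so imposing $\int F = 1$ is incompatible with the Euler--Lagrange equation from the outset. The planar case requires a different formulation (for instance, minimize $\int\abs{\nab u}^2$ over $\{\int F(u) = 0,\ \norm{u}_{L^2}=1\}$ in the spirit of Berestycki--Gallou\"et--Kavian), or else the shooting argument of \cite{BLP}, which treats all dimensions uniformly. A secondary point worth making explicit: Strauss compactness $H^1_{\mathrm{rad}} \hookrightarrow L^q$ holds only for $q$ strictly between $2$ and the critical exponent, while $F(u) \sim -\tfrac{1}{2}\abs{f'(0)}u^2$ near zero carries a genuinely quadratic contribution; passing $\int F$ to the weak limit must therefore exploit the favorable sign of that quadratic piece rather than compactness alone.
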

\noindent
We note that the growth condition in the field case is rather sharp.
Pohozaev~\cite{Pohozaev} showed that $f(u) = -u + u^p$ has no ground state when $d \geq 3$ and $p \geq \frac{d + 2}{d - 2}$.

The nondegeneracy \eqref{eq:nondegenerate} in \ref{hyp:ground} is more subtle.
While we expect this condition to be generic, it does not hold universally, and can be difficult to verify for a given nonlinearity.
For the reader's convenience, we therefore collect several conditions ensuring \eqref{eq:nondegenerate} that have appeared in the literature.

By~\cite{GNN79}, ground states are radially symmetric.
In Lemma~4.2 of \cite{NT}, Ni and Takagi showed that the multidimensional statement \eqref{eq:nondegenerate} is equivalent to the \emph{radial} stability of $U$.
\begin{lemma}[\cite{NT}]
  \label{lem:radial}
  If $U$ is stable to radial perturbations in the sense that the operator $-\partial_r^2 - \tfrac{d-1}{r} \partial_r - (f' \circ U)(r)$ has a bounded inverse on $L^2(\R)$, then \eqref{eq:nondegenerate} holds.
\end{lemma}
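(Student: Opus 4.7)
The plan is to decompose a bounded null solution into spherical harmonics and reduce the problem to a family of one-dimensional ODEs, using the hypothesis for the radial mode, Gidas--Ni--Nirenberg for the degree-one modes, and the centrifugal barrier for the higher modes.

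Let $\phi \in L^\infty$ satisfy $(-\Delta - f'(U))\phi = 0$. Interior elliptic regularity makes $\phi$ smooth, so I would expand $\phi(r\omega) = \sum_{k \geq 0}\sum_j \phi_{k,j}(r) Y_{k,j}(\omega)$ using an $L^2(S^{d-1})$ basis of spherical harmonics with $-\Delta_{S^{d-1}}Y_{k,j} = \mu_k Y_{k,j}$ and $\mu_k = k(k+d-2)$. Each coefficient solves
\begin{equation*}
    L_k \phi_{k,j} := \Bigl(-\partial_r^2 - \tfrac{d-1}{r}\partial_r + \tfrac{\mu_k}{r^2} - f'(U)\Bigr)\phi_{k,j} = 0.
\end{equation*}
Regularity at the origin forces $\phi_{k,j}(r) = O(r^k)$. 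Since $f'(U(r)) \to f'(0) < 0$ as $r \to \infty$, bounded solutions of this regular-singular ODE lie on the exponentially decaying branch at infinity, so in fact $\phi_{k,j} \in L^2\bigl((0,\infty); r^{d-1}\,dr\bigr)$. This reduces matters to ruling out $L^2$ null solutions of $L_k$ mode by mode.

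I would then treat the three ranges of $k$ separately. For $k = 0$, $L_0$ coincides (possibly after the Liouville change $v = r^{(d-1)/2}\phi_{0,j}$) with the hypothesized radial operator, so its trivial kernel forces $\phi_{0,j} \equiv 0$. For $k = 1$, differentiating $-\Delta U = f(U)$ yields $L_1 U' = 0$, and \cite{GNN79} gives $U'(r) < 0$ for all $r > 0$. Thus $-U'$ is a positive $L^2$ eigenfunction of $L_1$ at eigenvalue zero; since the essential spectrum of $L_1$ lies in $[|f'(0)|, \infty)$ and its bottom spectrum is discrete, positivity identifies $-U'$ as the simple principal eigenfunction with $\lambda_1(L_1) = 0$. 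Hence $\ker L_1 \cap L^2 = \op{span}\{U'\}$, and choosing $\{Y_{1,j}\}_{j=1}^d$ proportional to the Cartesian coordinates $\omega_j$ so that $U'(r) Y_{1,j}(\omega) \propto \partial_j U(x)$, the $k = 1$ modes contribute exactly $\op{span}\{\partial_1 U, \ldots, \partial_d U\}$. For $k \geq 2$, I would show $\lambda_1(L_k) > 0$, whence $\ker L_k \cap L^2 = \{0\}$ and $\phi_{k,j} \equiv 0$. If instead $\lambda_1(L_k) \leq 0$, discreteness below the essential spectrum yields a nonzero $L^2$ eigenfunction $\psi_k$, and writing $L_k = L_1 + (\mu_k - \mu_1)/r^2$ with $\mu_k > \mu_1$ one obtains
\begin{equation*}
    \langle L_1 \psi_k, \psi_k\rangle = \lambda_1(L_k) \|\psi_k\|_{L^2}^2 - (\mu_k - \mu_1) \int_0^\infty \frac{|\psi_k(r)|^2}{r^2}\, r^{d-1}\,dr < 0,
\end{equation*}
contradicting $\lambda_1(L_1) = 0$. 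Assembling the three cases gives $\phi \in \op{span}\{\partial_i U\}_{i=1}^d$.

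The main obstacle is the $k = 1$ step: identifying $-U'$ as the \emph{simple principal} eigenfunction of $L_1$ requires both the strict sign $U' < 0$ from \cite{GNN79} and the fact that the negative spectrum of $L_1$ is discrete, so that a sign-definite $L^2$ null solution must be the unique ground state. Everything else is routine separation of variables, together with an asymptotic analysis of the ODE at $0$ and $\infty$ and a quadratic-form comparison between $L_k$ and $L_1$.
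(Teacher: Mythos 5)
Your proof is correct and follows the same route as Ni--Takagi, which is the source the paper cites for this lemma (the paper does not reproduce the argument, deferring entirely to~\cite{NT}). The three-way split you use --- the radial stability hypothesis killing the $k=0$ mode, $L_1(-U')=0$ with $U'<0$ from Gidas--Ni--Nirenberg plus simplicity of the principal eigenvalue for $k=1$, and the quadratic-form comparison $L_k = L_1 + (\mu_k-\mu_1)/r^2$ with $\mu_k>\mu_1$ for $k\ge 2$ --- is exactly the standard spherical-harmonics reduction used in~\cite{NT}, and the supporting details you flag (Frobenius behavior at $r=0$, exponential decay at infinity since $f'(0)<0$, discreteness of the spectrum below $|f'(0)|$) are the right ones.
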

\noindent
Thus nondegeneracy is an essentially one-dimensional phenomenon.
This opens the door to ODE arguments, which feature in the proofs of the following.
\begin{proposition}[\cite{Kwong,PS,WW,NTW}]
  \label{prop:nondegenerate}
  Every ground state is nondegenerate, so \ref{hyp:ground} holds, under either of the following conditions:
  \begin{enumerate}[label = \textup{(\roman*)}]
  \item $f(u) = -u + u^p$ for $p \in (1, \frac{d+2}{d-2})$ when $d \geq 3$ or $p \in (1, \infty)$ when $d = 2$;
    \label{item:power}

  \item $f$ satisfies \ref{hyp:bistable} and the function $u \mapsto \frac{f(u)}{u-a}$ is nonincreasing in $(a, 1)$, where $a$ is the unique value in $(\theta, 1)$ at which $\int_0^a f = 0$.
    \label{item:strong}
  \end{enumerate}
\end{proposition}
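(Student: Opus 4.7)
The plan is to exploit Lemma~\ref{lem:radial} and reduce the multidimensional nondegeneracy \eqref{eq:nondegenerate} to a purely radial statement, after which the two cases become immediate consequences of the cited one-dimensional literature. Because ground states are radially symmetric by~\cite{GNN79}, writing $U = U(r)$ and decomposing any $\phi \in L^\infty \cap \ker(-\Delta - f'(U))$ into spherical harmonics yields radial coefficients $\phi_k$ satisfying
\begin{equation*}
  -\phi_k'' - \frac{d-1}{r}\phi_k' - f'(U(r))\,\phi_k + \frac{k(k+d-2)}{r^2}\phi_k = 0.
\end{equation*}
The $k = 1$ mode admits $U'(r)$ as a decaying solution regular at the origin, contributing exactly the translation kernel $\mathrm{span}\{\partial_1 U, \ldots, \partial_d U\}$. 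For $k \geq 2$, the strictly larger centrifugal term $k(k+d-2)/r^2 > (d-1)/r^2$ combined with the constant sign of $U'$ rules out additional bounded solutions via a standard Sturm comparison against the $k=1$ equation. The remaining radial mode $k = 0$ is precisely the object of Lemma~\ref{lem:radial}, so it suffices to verify the hypothesis of that lemma in each of the two cases.

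For case~(i), this radial stability for $f(u) = -u + u^p$ is the main ODE content of Kwong~\cite{Kwong}, established via a shooting argument combined with an Emden--Fowler-type change of variables that linearizes the asymptotics at infinity; the two-dimensional statement and streamlined alternative proofs appear in~\cite{WW, NTW}. For case~(ii), the bistable radial nondegeneracy follows from the ODE analysis of Peletier--Serrin~\cite{PS}: the hypothesis that $u \mapsto f(u)/(u - a)$ is nonincreasing on $(a, 1)$ enforces a comparison principle that forbids a second independent decaying solution of the radial linearization, playing the same role for bistable $f$ that the power structure plays in case~(i). The threshold $a \in (\theta, 1)$ where $\int_0^a f = 0$ is exactly the level at which the Lyapunov functional $\tfrac{1}{2}(U')^2 + F(U)$ vanishes along the radial trajectory, which is what allows the comparison to close.

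The main obstacle is not the reduction from $d$ to $1$ dimensions --- this is essentially automatic given Lemma~\ref{lem:radial} and the Sturm argument sketched above --- but rather the delicate one-dimensional analyses underlying each case, both of which exploit fine structural features of the nonlinearity (the explicit power form in (i), the monotonicity of $f(u)/(u-a)$ in (ii)). Since the purpose of this appendix is to \emph{collect} sufficient conditions for \ref{hyp:ground}, we invoke~\cite{Kwong, PS, WW, NTW} once the reduction is in place, rather than reproducing their arguments.
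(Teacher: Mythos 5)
Your proposal is correct and follows essentially the same route as the paper: both reduce the multidimensional nondegeneracy \eqref{eq:nondegenerate} to a purely radial statement via Lemma~\ref{lem:radial} and then invoke the cited one-dimensional ODE analyses (Kwong for the power case, Peletier--Serrin for the bistable case, with Ni--Takagi--Wei and Wei--Winter for the explicit nondegeneracy statements). The one stylistic redundancy is that you sketch the spherical-harmonic decomposition and the Sturm comparison for $k \geq 1$ before invoking Lemma~\ref{lem:radial}; but that lemma already encapsulates precisely this reduction, so you could invoke it directly and only verify radial nondegeneracy --- this is what the paper does in its accompanying remark.
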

\noindent
Nondegeneracy is often closely tied to the uniqueness of the ground state $U$.
Thus Proposition~\ref{prop:nondegenerate} was essentially shown by Kwong~\cite{Kwong} and Peletier and Serrin~\cite{PS}, but not stated explicitly.
The third author has explicitly shown nondegeneracy in~\cite[Lemma~13.4]{WW} with Winter and~\cite[Lemma~5.2]{NTW} with Ni and Takagi.

Conversely, constructions of multiple ground states implicitly lead to examples of degenerate states.
For example, Peletier and Serrin exhibit a bistable nonlinearity admitting multiple ground states~\cite[\S5]{PS}.
Interpolating between this example and a bistability with uniqueness, we should observe multiple solutions emerging out of a bifurcation.
We expect the ground state at the bifurcation to be degenerate.

Finally, we observe that \ref{item:strong} implies nondegeneracy for the model bistability $f(u) = u(u - \theta)(1 - u)$ for $\theta \in (0, 1/2)$, which ensures $\int_0^1 f > 0$.

\printbibliography
\end{document}